\definecolor{blue}{cmyk}{1,0.6,0,0.06}
\newcommand\red{\textcolor{red}}
\let\ams@starttoc\@starttoc
\edef\restoreparindent{\parindent=\the\parindent\relax}
\let\@starttoc\ams@starttoc
\patchcmd{\@starttoc}{\makeatletter}{\makeatletter\parskip\z@}{}{}
\numberwithin{equation}{section}
\newtheorem{theorem}{Theorem}[section]
\newtheorem{lemma}[theorem]{Lemma}
\newtheorem{proposition}[theorem]{Proposition}
\newtheorem{remark}[theorem]{Remark}
\newtheorem{definition}[theorem]{Definition}
\newcommand{\bbE}{{\ensuremath{\mathbb E}} }
\newcommand{\bbN}{{\ensuremath{\mathbb N}} }
\newcommand{\bbP}{{\ensuremath{\mathbb P}} }
\newcommand{\bbR}{{\ensuremath{\mathbb R}} }
\newcommand{\cI}{{\ensuremath{\mathcal I}} }
\newcommand{\ga}{\alpha}
\newcommand{\gb}{\beta}
\newcommand{\gl}{\lambda}
\newcommand\sfK{\mathsf K}
\newcommand\sfR{\mathsf R}
\newcommand\sfS{\mathsf S}
\newcommand\sfg{\mathsf g}
\newcommand\bm{\boldsymbol{m}}
\newcommand\bx{\boldsymbol{x}}
\newcommand\bz{\boldsymbol{z}}
\renewcommand{\tilde}{\widetilde}          % wider `tilde'
\DeclareMathSymbol{\leqslant}{\mathalpha}{AMSa}{"36} % nicer `smaller or equal'
\DeclareMathSymbol{\geqslant}{\mathalpha}{AMSa}{"3E} % nicer `larger or equal'
\DeclareMathSymbol{\eset}{\mathalpha}{AMSb}{"3F}     % nicer `emptyset'
\newcommand{\dd}{\text{\rm d}}             % a straight d for differentials
\newcommand{\R}{\mathbb{R}}
\newcommand{\C}{\mathbb{C}}
\newcommand{\Z}{\mathbb{Z}}
\newcommand{\N}{\mathbb{N}}
\DeclareMathOperator{\sign}{sgn}
\renewcommand{\epsilon}{\varepsilon}
\renewcommand{\theta}{\vartheta}
\renewcommand{\rho}{\varrho}
\renewcommand{\phi}{\varphi}
\renewcommand{\i}{\mathrm{i}} % imaginary unit
\newcommand{\diff}{\mathop{}\!\mathrm{d}} % differential
\renewcommand{\P}{\mathbb{P}} % probability
\newcommand{\E}{\mathbb{E}} % expectation
\newcommand{\1}{\mathbbm{1}} % indicator
\renewcommand{\emptyset}{\varnothing} % empty set
\renewcommand{\hat}{\widehat} % wide hat
\newcommand{\arrays}{\mathcal{A}_{\mathcal{I}}}
\DeclareMathOperator{\Flat}{flat}
\DeclareMathOperator{\hFlat}{h-flat}
\DeclareMathOperator{\rFlat}{r-flat}
\newcommand{\fZ}{Z^{\Flat}} %flat partition
\newcommand{\hZ}{Z^{\hFlat}} %half-flat partition
\newcommand{\rZ}{Z^{\rFlat}} %restricted flat partition
\newcommand{\fPi}{\Pi^{\Flat}} %flat path
\newcommand{\hPi}{\Pi^{\hFlat}} %half-flat path
\newcommand{\rPi}{\Pi^{\rFlat}} %restricted flat path
\newcommand{\fG}{\Gamma^{\Flat}} % flat gamma
\newcommand{\hG}{\Gamma^{\hFlat}} %half- flat gamma
\newcommand{\rG}{\Gamma^{\rFlat}} %restricted flat gamma
\newcommand{\fPhi}{\Phi^{\Flat}} % flat Phi
\newcommand{\hPhi}{\Phi^{\hFlat}} %half- flat Phi
\newcommand{\rPhi}{\Phi^{\rFlat}} %restricted flat Phi
\newcommand{\fT}{\mathcal{T}^{\Flat}} % flat Phi
\newcommand{\hT}{\mathcal{T}^{\hFlat}} %half- flat Phi
\newcommand{\rT}{\mathcal{T}^{\rFlat}} %restricted flat Phi
\newcommand{\fTau}{\tau^{\Flat}} %flat LPP
\newcommand{\hTau}{\tau^{\hFlat}} %half-flat LPP
\newcommand{\rTau}{\tau^{\rFlat}} %restricted flat LPP
\newcommand\rsk{\sfR\sfS\sfK} %RSK
\newcommand\grsk{\sfg\sfR\sfS\sfK} %gRSK
\DeclareMathOperator{\type}{type}
\DeclareMathOperator{\sym}{sym}
\DeclareMathOperator{\GL}{GL}
\DeclareMathOperator{\GT}{GT}
\let\sp\relax
\DeclareMathOperator{\sp}{sp}
\DeclareMathOperator{\schur}{s}
\DeclareMathOperator{\Pf}{Pf} % Pfaffian
\DeclarePairedDelimiter\abs{\lvert}{\rvert} 
\DeclarePairedDelimiterX{\norm}[1]{\lVert}{\rVert}{#1} 
\let\oldnorm\norm
\def\norm{\@ifstar{\oldnorm}{\oldnorm*}}
\DeclarePairedDelimiterX{\ceil}[1]{\lceil}{\rceil}{#1} 
\let\oldceil\ceil
\def\ceil{\@ifstar{\oldceil}{\oldceil*}}
\DeclarePairedDelimiterX{\floor}[1]{\lfloor}{\rfloor}{#1}
\let\oldfloor\floor
\def\floor{\@ifstar{\oldfloor}{\oldfloor*}}
\newcommand\llrighttriangle[1][1]{%
\begin{tikzpicture}[scale=#1]
\draw (0,0) -- (0,1) -- (1,0) -- (0,0);
\end{tikzpicture}
}
\newenvironment{myenumerate}{%
\renewcommand{\theenumi}{(\roman{enumi})}%
\renewcommand{\labelenumi}{\theenumi}%
\begin{list}{\labelenumi}
	{%
	\setlength{\itemsep}{0.4em}%
	\setlength{\topsep}{0.5em}%
	\setlength\leftmargin{2.45em}%
	\setlength\labelwidth{2.05em}%
	\setlength{\labelsep}{0.4em}%
	\usecounter{enumi}%
	}%
	}%
{\end{list}
}
\renewenvironment{enumerate}{
\begin{myenumerate}}%
{\end{myenumerate}}
\newsavebox{\mybox}\newsavebox{\mysim}
\newcommand{\asymptotic}[1]{%
  \savebox{\mybox}{\hbox{\kern3pt$\scriptstyle#1$\kern3pt}}%
  \savebox{\mysim}{\hbox{$\sim$}}%
  \mathbin{\overset{#1}{\kern\z@\resizebox{\wd\mybox}{\ht\mysim}{$\sim$}}}%
}
\renewcommand{\@secnumfont}{\bfseries}
\renewcommand\section{\@startsection{section}{1}%
\z@{.7\linespacing\@plus\linespacing}{.5\linespacing}%
{\large\scshape\bfseries\centering}}
\renewcommand\subsection{\@startsection{subsection}{2}%
  \z@{.5\linespacing\@plus.7\linespacing}{-.5em}%
  {\bfseries\scshape}}
\newenvironment{myitemize}{%
\begin{list}{$\bullet$}%
 	{%
	\setlength{\itemsep}{0.4em}%
	\setlength{\topsep}{0.5em}%
	\setlength\leftmargin{2.45em}%
	\setlength\labelwidth{2.05em}%
	\setlength{\labelsep}{0.4em}%
%	\usecounter{enumi}%
	}%
	}%
{\end{list}}
\renewenvironment{itemize}{
\begin{myitemize}}%
{\end{myitemize}}
\renewenvironment{proof}[1][\proofname]{\par
\pushQED{\qed}%
\normalfont \topsep4\p@\@plus4\p@\relax
\trivlist
\item[\hskip\labelsep
\bfseries
#1\@addpunct{.}]\ignorespaces
}{%
\popQED\endtrivlist\@endpefalse
}
\newcommand \Dotfill {\leavevmode \leaders \hb@xt@ 6pt{\hss .\hss }\hfill \kern \z@}
\def\@tocline#1#2#3#4#5#6#7{\relax
  \ifnum #1>\c@tocdepth % then omit
  \else
    \par \addpenalty\@secpenalty\addvspace{#2}%
    \begingroup \hyphenpenalty\@M
    \@ifempty{#4}{%
      \@tempdima\csname r@tocindent\number#1\endcsname\relax
    }{%
      \@tempdima#4\relax
    }%
    \parindent\z@ \leftskip#3\relax \advance\leftskip\@tempdima\relax
    \rightskip\@pnumwidth plus4em \parfillskip-\@pnumwidth
    #5\leavevmode\hskip-\@tempdima
      \ifcase #1
       \or\or \hskip 1.65em \or \hskip 3.3em \else \hskip 4.95em \fi%
      #6\nobreak\relax
    \Dotfill
    \hbox to\@pnumwidth{\@tocpagenum{#7}}\par
    \nobreak
    \endgroup
  \fi}
\def\l@section{\@tocline{1}{0pt}{1pc}{}{\scshape}}
\renewcommand{\tocsection}[3]{%
\indentlabel{\@ifnotempty{#2}{\ignorespaces#1 #2.\hskip 0.7em}}#3}
\def\l@subsection{\@tocline{2}{0pt}{1pc}{5pc}{}}
\def\l@subsubsection{\@tocline{3}{0pt}{1pc}{7pc}{}}
\newcommand\gln{\mathfrak{gl}}
\newcommand\son{\mathfrak{so}}
\begin{document}

\title[point-to-line polymer]{Point-to-line polymers
 and orthogonal Whittaker functions}

\begin{abstract}
We study a one dimensional 
directed polymer model in an inverse-gamma random environment, known as the {\it log-gamma polymer}, in three different
geometries: point-to-line, point-to-half-line and when the polymer is restricted to a half-space 
with end point lying free on the corresponding half-line.
Via the use of A.N.Kirillov's geometric Robinson-Schensted-Knuth correspondence, we compute the Laplace transform of the
 partition functions in the above geometries in terms of orthogonal Whittaker functions, thus obtaining new connections between the ubiquitous class of Whittaker functions and exactly solvable probabilistic models. In the case of the first two geometries we also provide
multiple contour integral formulae for the corresponding Laplace transforms. Passing to the zero-temperature limit, we obtain new formulae for the corresponding last passage 
percolation problems with exponential weights.
\end{abstract}

\author[E.~Bisi]{Elia Bisi}
\address{Department of Statistics\\
University of Warwick\\
Coventry CV4 7AL, UK}
\email{E.Bisi@warwick.ac.uk}

\author[N.~Zygouras]{Nikos Zygouras}
\address{Department of Statistics\\
University of Warwick\\
Coventry CV4 7AL, UK}
\email{N.Zygouras@warwick.ac.uk}

\keywords{point-to-line log-gamma polymers, orthogonal Whittaker functions, geometric Robinson-Schensted-Knuth correspondence, 
Bump-Stade identity, Ishii-Stade identity}
\subjclass[2010]{Primary: 60Cxx, 05E05, 82B23; Secondary: 11Fxx, 82D60}

\maketitle

\tableofcontents

\addtocounter{section}{0}

\section{Introduction}
Recent efforts in understanding the structure that underlies the Kardar-Parisi-Zhang universality class has led to remarkable connections
between probability, combinatorial structures and representation theoretic objects. Some of the highlights include the solvability of the
Asymmetric Simple Exclusion Process (ASEP) by Tracy and Widom \cite{TW09a, TW09b} via the method of Bethe Ansatz, 
the construction of Macdonald Processes by Borodin and Corwin \cite{BC14} and various particle processes
 ($q$-Totally Asymmetric Simple Exclusion Process, $q$-Totally Asymmetric Zero Range Process etc.) that fall within this scope, 
 the stochastic six-vertex model \cite{BP16},
 the Brownian, semi-discrete polymer and its relation to the Quantum Toda hamiltonian as established by O'Connell \cite{O12}
 and the exactly solvable log-gamma directed polymer, which was introduced by Sepp\"al\"ainen~\cite{Sep12} and analyzed in
 ~\cite{COSZ14, OSZ14, BCR13, NZ15}.

In this article we study further the log-gamma polymer and its structure and we establish new connections to 
the representation theoretic object known as Whittaker functions, which appear in various different places such as mirror symmetry
\cite{Giv97} (see \cite{Lam13} for a review)
 and quantum integrable systems \cite{KL01} and are of central importance in the theory of automorphic forms \cite{Bum89, Gold06}.
  
The log-gamma polymer is defined as follows: On the lattice $\{(i,j) \colon (i,j)\in \bbN^2 \}$ we consider a family of independent 
random variables $\{W_{i,j}\colon (i,j)\in \bbN^2\}$ distributed as inverse-gamma variables
\begin{align*}
\bbP(W_{i,j}\in \dd w_{i,j}) = \frac{1}{\Gamma(\gamma_{i,j})} w^{-\gamma_{i,j}} e^{-1/w_{i,j}} \,\frac{\dd w_{i,j}}{w_{i,j}} \, ,\qquad (i,j)\in \bbN^2 \, ,
\end{align*}
with $\gamma_{i,j}$ positive parameters. 
For $(p,q)\in \bbN^2$ fixed, denote by $\Pi_{p,q}$ the set of all directed, nearest neighbor paths from $(1,1)$ to $(p,q)$, called \emph{polymer paths}. The \emph{log-gamma polymer measure} gives to every such path $\pi$ a weight
\begin{align*}
\frac{1}{Z_{p,q}} \prod_{(i,j)\in \pi} W_{i,j} \, ,
\end{align*}
where the normalization
\begin{align}\label{p2p_partition}
Z_{p,q}:=\sum_{\pi\in \Pi_{p,q}} \prod_{(i,j)\in \pi} W_{i,j}
\end{align}
 is called the {\it point-to-point partition function} of the log-gamma polymer.
The characterization {\it point-to-point} is due to the fact that only  paths that start at a fixed point $(1,1)$ and end at a fixed point $(p,q)$ are considered. 
The partition function $Z_{p,q}$ can be viewed as a discrete version of the solution to the Stochastic Heat Equation (SHE) in dimension one
\begin{align}\label{SHE}
\partial_t u =\frac{1}{2}\partial_{xx}^2 u + \dot{W}(t,x) \,u \, ,
\end{align}
with delta initial condition at zero, where $\dot{W}(t,x)$ is space-time white noise.
 The transformation $h=\log u$ of the solution to \eqref{SHE} leads, then,  to the solution
of the KPZ equation
\begin{align*}
\partial_t h =\frac{1}{2}\partial_{xx}^2 h +\frac{1}{2} (\partial_x h)^2+ \dot{W}(t,x) \, ,
\end{align*} 
 under the so-called {\it narrow wedge} initial condition. See \cite{Sep12} for the introduction of log-gamma polymer, as well
 as the more general, recent review on random polymers \cite{Com17}.
 
Using ideas from algebraic combinatorics and in particular A.N.Kirillov's {\it geometric} Robinson-Schensted-Knuth correspondence \cite{K01}
(see also \cite{NY04})\footnote{Originally named {\it tropical} by Kirillov in his article ``Introduction to Tropical Combinatorics'' \cite{K01}.}, \cite{COSZ14, OSZ14} were able to determine the Laplace transform
 of the point-to-point partition function for a log-gamma polymer with parameters $\gamma_{i,j}=\ga_{i}+\gb_{j}$ and make a connection
 to $GL_n(\R)$-Whittaker functions (an earlier connection between the Brownian, semi-discrete polymer and 
 $GL_n(\R)$-Whittaker functions appeared in \cite{O12}).
 In particular, it was established that
  \begin{align}\label{p2p_gln}
 \bbE\Big[e^{-r Z_{n,n}}\Big]=\frac{1}{\prod_{1\leq i,j\leq n}\Gamma(\ga_i+\gb_j)}
 \int_{\R_+^n} e^{-rx_1-1/x_n} \Psi^{\mathfrak{gl}_n}_{\bm{\ga}}(\bx) \Psi^{\mathfrak{gl}_n}_{\bm{\gb}}(\bx) 
 \prod_{i=1}^n\frac{\dd x_i}{x_i} \, ,
 \end{align}
 where $\bx=(x_1,\dots,x_n)$, $\bm{\ga}=(\ga_1,\dots,\ga_n)$, $\bm{\gb}=(\gb_1,\dots,\gb_n)$ and $\Psi^{\mathfrak{gl}_n}_{\bm{\ga}}(\bx)$ are the 
 $GL_n(\R)$-Whittaker functions (see section \ref{sec:glWhittakerFns} for details)\footnote{Even though Whittaker functions are typically associated to Lie groups, for notational convenience we will be mostly using mathfrak 
symbols, $\mathfrak{gl}_n, \mathfrak{so}_{2n+1}$ etc., which are
 normally used for Lie algebras.}. Furthermore, using the Plancherel theory 
 for $GL_n(\R)$-Whittaker functions a contour integral formula was derived for \eqref{p2p_gln},
  which was subsequently turned into a Fredholm determinant
 in \cite{BCR13} allowing to derive the Tracy-Widom GUE asymptotics.
 
In this work we consider the directed polymer with inverse gamma disorder but with the end point lying free on a line. In particular, we will
consider three different geometries of paths (see also Figure \ref{fig:directedPath} for a graphical representation): 
\begin{itemize}
\item[(i)] {\it point-to-line} (or {\it flat}), which we denote $\fPi_{n} $ and consider to be the
set of all directed paths from $(1,1)$ to the line $\{(i,j)\colon i+j = n+1\}$. The corresponding partition function is then
\begin{align}\label{eq:flatPartitionFn}
\fZ_{n}:=\sum_{\pi\in \fPi_{n}} \prod_{(i,j)\in \pi} W_{i,j} \, .
\end{align}
\item[(ii)] 
{\it point-to-half-line} (or {\it half-flat}), which we denote $\hPi_{n} $ and consider to be the
set of all directed paths from $(1,1)$ to the halfline $\{(i,j)\colon i+j=n+1, \,i \leq j\}$.
 The corresponding partition function is then
\begin{align}\label{eq:hFlatPartitionFn}
\hZ_{n}:=\sum_{\pi\in \hPi_{n}} \prod_{(i,j)\in \pi} W_{i,j} \, .
\end{align}
\item[(iii)] 
{\it restricted point-to-half-line} (or {\it restricted half-flat}), which we denote $\rPi_{n} $ and consider to be the
set of all directed paths from $(1,1)$ to the half-line $\{(i,j)\colon i+j = n+1, \,i \leq j\}$ and with paths $\pi\in\rPi_{n}$ restricted to
the half-space  $\{(i,j)\colon i \leq j\}$. The corresponding partition function is then
\begin{align}\label{eq:rFlatPartitionFn}
\rZ_{n}:=\sum_{\pi\in \rPi_{n}} \prod_{(i,j)\in \pi} W_{i,j} \, .
\end{align}
\end{itemize}
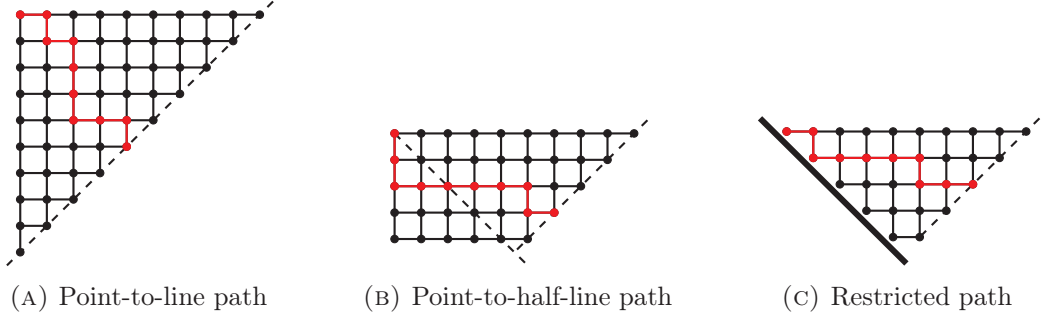
\begin{figure}
\centering
\begin{minipage}[b]{.33\linewidth}
\centering
\begin{tikzpicture}[scale=0.35]
\draw[thick,dashed] (10.5,-.5) -- (.5,-10.5);
\foreach \i in {1,...,10}{
	\draw[thick] (1,-\i) grid (11-\i,-\i);
	\draw[thick] (\i,-1) grid (\i,-11+\i);
	
		\foreach \j in {\i,...,10}{
		\node[draw,circle,inner sep=1pt,fill] at (11-\j,-\i) {};
	}
}

\draw[thick,color=red,-] (1,-1) -- (2,-1) -- (2,-2) -- (3,-2) -- (3,-3) -- (3,-4) -- (3,-5) -- (4,-5) -- (5,-5) -- (5,-6);
\foreach \x in {(1,-1),(2,-1),(2,-2),(3,-2),(3,-3),(3,-4),(3,-5),(4,-5),(5,-5),(5,-6)}{
	\node[draw,circle,inner sep=1pt,fill,red] at \x {};
	}
\end{tikzpicture}
\subcaption{Point-to-line path}
\label{subfig:P2Lpath}
\end{minipage}%
\begin{minipage}[b]{.33\linewidth}
\centering
\begin{tikzpicture}[scale=0.35]
\draw[thick,dashed] (10.5,-.5) -- (5.5,-5.5); \draw[thick,dashed] (1,-1) -- (6,-6);
\foreach \i in {1,...,5}{
	\draw[thick] (1,-\i) grid (11-\i,-\i);
	
		\foreach \j in {\i,...,10}{
		\node[draw,circle,inner sep=1pt,fill] at (11-\j,-\i) {};
	}
}

\foreach \j in {1,...,5}{
	\draw[thick] (\j,-1) grid (\j,-5);
	\draw[thick] (5+\j,-1) grid (5+\j,-6+\j);
}

\draw[thick,color=red,-] (1,-1) -- (1,-2) -- (1,-3) -- (2,-3) -- (3,-3) -- (4,-3) -- (5,-3) -- (6,-3) -- (6,-4) -- (7,-4);
\foreach \x in {(1,-1),(1,-2),(1,-3),(2,-3),(3,-3),(4,-3),(5,-3),(6,-3),(6,-4),(7,-4)}{
	\node[draw,circle,inner sep=1pt,fill,red] at \x {};
	}
\end{tikzpicture}
\subcaption{Point-to-half-line path}
\label{subfig:P2half-Lpath}
\end{minipage}%
\begin{minipage}[b]{.33\linewidth}
\centering
\begin{tikzpicture}[scale=0.35]
\draw[line width=0.8mm] (0,-.5) -- (5.5,-6);
\draw[thick,dashed] (10.5,-.5) -- (6,-5);
\foreach \i in {1,...,5}{
	\draw[thick] (\i,-\i) grid (11-\i,-\i);
	
		\foreach \j in {\i,...,5}{
		\node[draw,circle,inner sep=1pt,fill] at (\j,-\i) {};
		\node[draw,circle,inner sep=1pt,fill] at (11-\j,-\i) {};
	}
}

\foreach \j in {1,...,5}{
	\draw[thick] (\j,-1) grid (\j,-\j);
	\draw[thick] (5+\j,-1) grid (5+\j,-6+\j);
}

\draw[thick,color=red,-] (1,-1) -- (2,-1) -- (2,-2) -- (3,-2) -- (4,-2) -- (5,-2) -- (6,-2) -- (6,-3) -- (7,-3) -- (8,-3);
\foreach \x in {(1,-1),(2,-1),(2,-2),(3,-2),(4,-2),(5,-2),(6,-2),(6,-3),(7,-3),(8,-3)}{
	\node[draw,circle,inner sep=1pt,fill,red] at \x {};
	}
\end{tikzpicture}
\subcaption{Restricted path}
\label{subfig:restrictP2Lpath}
\end{minipage}
\caption{Directed paths in $\N^2$ of length $10$ from the point $(1,1)$ to the line $i+j-1=10$. The three paths, highlighted in red, correspond to three different geometries, as specified. The picture is rotated by $90^{\circ}$ clockwise w.r.t.\ the Cartesian coordinate system, to adapt it to the usual matrix/array indexing.}
\label{fig:directedPath}
\end{figure}

We will compute the Laplace transform of the
above partition functions in terms of Whittaker functions corresponding to the orthogonal group $SO_{2n+1}(\R)$, which
we will denote by $\Psi^{\mathfrak{so}_{2n+1}}_{\bm{\ga}}(\bx)$, with $\bm{\ga}=(\ga_1,\dots,\ga_n)$ and $\bx=(x_1,\dots,x_n)$ (see Section
\ref{sec:orthogonalWhittakerFns} for details), as well as $GL_n(\R)$-Whittaker functions $\Psi^{\mathfrak{gl}_n}_{\bm{\ga}}(\bx)$.
 More precisely, after choosing appropriately the parameters $\gamma_{i,j}$ of the inverse gamma variables, we obtain that
  \begin{align}
\bbE\Big[e^{-r \fZ_{\,2n}}\Big]&=\frac{r^{\sum_{i=1}^n(\ga_i+\gb_i)}}{\fG_{\bm{\alpha},\bm{\beta}}}
   \int_{\R_+^n} e^{-rx_1} \Psi^{\mathfrak{so}_{2n+1}}_{\bm{\ga}}(\bx) \Psi^{\mathfrak{so}_{2n+1}}_{\bm{\gb}}(\bx) 
   \prod_{i=1}^n\frac{\dd x_i}{x_i} \, , \label{p2line_gen1}\\
\E\Big[e^{- r \hZ_{\,2n} }\Big] 
   &= \frac{r^{\sum_{i=1}^{n} (\alpha_i+\beta_i)}}
   {\hG_{\bm{\alpha},\bm{\beta}}}
   \int_{\R_{+}^n} e^{-r x_1}
   \Psi_{\bm{\alpha}}^{\mathfrak{so}_{2n+1}}(\bm{x})
   \Psi_{\bm{\beta}}^{\mathfrak{gl}_n}(\bm{x})
   \prod_{i=1}^n \frac{\diff x_i}{x_i} \, , \qquad \text{and} \label{p2line_gen2}\\
\E\Big[e^{-r  \rZ_{\,2n} }\Big] 
   &= \frac{r^{\sum_{i=1}^{n} \alpha_i }}
   {\rG_{\bm{\alpha}}}
   \int_{\R_{+}^n} e^{-r x_1}
   \Psi_{\bm{\alpha}}^{\mathfrak{so}_{2n+1}}(\bm{x})
   \prod_{i=1}^n \frac{\diff x_i}{x_i} \, , \label{p2line_gen3}
 \end{align}
where $\fG_{\bm{\alpha},\bm{\beta}}$, $\hG_{\bm{\alpha},\bm{\beta}}$ and $\rG_{\bm{\alpha}}$ are suitable normalization constants.
It is interesting to note the structure of these formulae in comparison to formula \eqref{p2p_gln} for the point-to-point polymer. Informally,
one could say that ``opening'' each part of the end point's ``wedge'' to a (diagonally) flat
part corresponds to replacing a $GL_n(\R)$-Whittaker function
with an $SO_{2n+1}(\R)$-Whittaker function. However, a priori there is no obvious reason why this analogy should take place.
  
 Whittaker functions associated to general Lie groups have already appeared in probability in terms of describing the law of Brownian motion
 on these Lie groups conditioned on certain exponential functionals \cite{BO11}, \cite{Ch13}. The latter reference provides
 also  a comprehensive 
 account of various algebraic properties and origins of Whittaker functions. Further extensions in this direction, in both the {\it ``Archimedean'' and
 ``non-Archimedean'' cases}  have been achieved in \cite{Ch15, Ch16}. In \cite{Nte17} $SO_{2n+1}(\R)$-Whittaker functions
 also emerged in the description of the Markovian dynamics of systems of interacting particles restricted by a {\it soft} wall.
 In our setting, $SO_{2n+1}(\R)$-Whittaker functions emerge through a combinatorial analysis of the log-gamma polymer
 via the geometric Robinson-Schensted-Knuth correspondence. 
 Using the framework and properties of geometric $\rsk$ as in \cite{NZ15}, we 
  determine the joint law of all point-to-point partition functions with end point on a line or half line. Subsequently,
   we are able to derive an integral formula for the Laplace transform of the
  various point-to-line partition functions after expressing them as a sum of the corresponding point-to-point ones. 
  These formulae do not
  immediately relate to Whittaker functions but do so after a change of variables and appropriate decompositions
  of the integrals, thus leading to formulae \eqref{p2line_gen1}, \eqref{p2line_gen2}, \eqref{p2line_gen3}.
   Even though simple, the alluded change of variables is remarkable in the sense that it precisely couples the 
   structure of orthogonal Whittaker functions with the combinatorial structure of the point-to-line polymers and their Laplace transforms.
   We should note that we would probably not be able to
  relate to Whittaker functions, had we aimed to compute functionals of the point-to-line partition other than the Laplace transform.
Nevertheless, the Laplace transform is the most relevant functional as it determines the distribution.
Once \eqref{p2line_gen1}, \eqref{p2line_gen2}, \eqref{p2line_gen3} are obtained we go one step further by rewriting the first two of them
as contour integrals involving Gamma functions
(even though we can also formally write \eqref{p2line_gen3} as a contour integral, we miss the necessary
estimates that would fully justify such a representation).
 This is done via the use of Plancherel theory for $GL_n$-Whittaker functions and 
special integral identities of products of $GL_n\times GL_n$ and $GL_n\times SO_{2n+1}$ Whittaker functions due to
Bump-Stade \cite{Bum84, Sta02} and Ishii-Stade \cite{IS13}, respectively. Such integral identities are important in number
theory as they lead to functional equations for $L$-series, facilitating the study of their zeros \cite{Bum89}.
Currently, there do not exist such integral identities for
products of $GL_n\times SO_{2n}$ and that is why we restrict our presentation to polymers $Z_{2n}$ of even length, even though
our combinatorial analysis allows one to write the corresponding formulae \eqref{p2line_gen1}, \eqref{p2line_gen2}, \eqref{p2line_gen3}
for polymers of odd length, as well, in terms of $SO_{2n}(\R)-$Whittaker functions.
 
 Calabrese and Le Doussal studied in \cite{CLeD11, CLeD12} the {\it continuum random polymer} with flat initial conditions. Via the
 non-rigorous approach of Bethe ansatz for the Lieb-Liniger model and the {\it replica trick}, they exhibited that its Laplace transform can
 be written in terms of a Fredholm Pfaffian from which the Tracy-Widom GOE asymptotics are derived. In their method they had first to
 derive a series representation for the half-flat initial condition and then the flat case was deduced from the former via a suitable limit.
 More recently, \cite{G17} applied the method of Calabrese and Le Doussal and of 
 Thiery and Le Doussal \cite{TLeD14} to study, again at non-rigorous level,
 the Laplace transform of the log-gamma polymer with
 end point lying free on a line.
Ortmann-Quastel-Remenik \cite{OQR16, OQR17} have made a number of steps in the approach of Calabrese and Le Doussal
rigorous in the case of the Asymmetric Exclusion Process 
(see also the earlier work of Lee \cite{Lee10}). In the half-flat case \cite{OQR16}
 derived a series formula for the $q$-deformed Laplace
transform of the height function. Formal asymptotics on this formula have indicated that the (centered and rescaled)
 limiting distribution should be given by the one-point marginal distribution of the ${\rm Airy}_{2\to 1} $ process,
  which is expressed in terms of a Fredholm determinant.
 However, a Fredholm structure is not apparent before passing to the limit.
 In the flat case \cite{OQR17}, following \cite{CLeD11, CLeD12}, obtained a series formula for the same $q$-deformed
 Laplace transform of the height function as a limit of the half-flat case.
 The formula obtained for the flat case does not have an apparent Fredholm structure, either. However, for a different 
 $q$-deformation of the Laplace transform a Fredholm Pfaffian appears, but the pitfall of this new deformation is that it does not
 determine the distribution of the height function.
   
Our approach is orthogonal to the methods used in the above works. We do not rely on Bethe ansatz computations but we rather 
explore the underlying combinatorial structure of the log-gamma polymer.  
  Moreover, we do not derive the flat case as a limit of the half-flat but, instead, we work with the common underlying 
  structure, which allows for a more unified and systematic approach giving access to other geometries, as well.
  We do not pursue in this work an asymptotic analysis on the law of the partition functions as our primary focus has
  been the analysis of their combinatorial structure and the links to orthogonal Whittaker functions. 
  We hope, though, that the method developed here can provide a route to the asymptotic analysis of the log-gamma polymer in
  the flat and half-flat geometries and this is currently under investigation. This hope is also reinforced by the fact that
  in the zero temperature case (see the discussion that follows) the formulae that emerge from our approach provide
  alternative derivations of GOE and Airy$_{2\to 1}$ statistics \cite{BZ17}. In particular, they offer an alternative route to Sasamoto's Fredholm determinant formula for GOE \cite{Sa05}.
 
In the zero temperature setting, Baik-Rains \cite{BR01} (see also Ferrari \cite{Fe04} for a continuum, Hammersley last passage percolation model)
studied the point-to-line last passage percolation 
\begin{align*}
\tau^{\rm flat}_{n}:=\max_{\pi \in \fPi_{\,n}} \sum_{(i,j)\in \pi} W_{i,j}
\end{align*}
with geometrically distributed weights $W_{i,j}$
 via the use of the standard Robinson-Schensted-Knuth correspondence and the observation that
\begin{align}\label{symmetric}
\tau^{\rm flat}_{n}=\frac{1}{2}\max_{\pi \in \Pi_{n,n}} \sum_{(i,j)\in \pi} W^{\rm \,s}_{i,j} \, ,
\end{align}
where $\Pi_{n,n}$ is the set of directed paths from $(1,1)$ to $(n,n)$ and the matrix $(W^{\rm \, s}_{i,j}\colon 1\leq i,j\leq n)$
 is symmetric along the anti-diagonal, i.e.\ $W^{\rm \,s}_{n-j+1,n-i+1}=W^{\rm \,s}_{i,j}=W_{i,j}$, for all $(i,j)$ with $i+j\leq n+1$.
 However, in the polymer case (positive temperature) considering a point-to-point directed polymer on a symmetric along the anti-diagonal matrix does not give
 the point-to-line partition functions, as instead of \eqref{symmetric} one obtains that
  $Z^{\rm \,s}_{n,n}=\sum_{(i,j)\colon i+j=n+1} (Z_{i,j})^2$, where 
  $Z^{\rm \,s}_{n,n}$ denotes the point-to-point partition function
  on an antisymmetric matrix. Accordingly, our use of the geometric Robinson-Schensted-Knuth correspondence
   does not go through the route of applying it to antisymmetric
  matrices.
   
From our formula \eqref{p2line_gen1} we can pass to the zero temperature limit by scaling suitably the parameters of the 
inverse gamma variables and obtain the distribution function of $\tau_{2n}^{\rm flat}$ for exponentially distributed weights $W_{i,j}$
 in terms of (the continuous analogue of) symplectic $Sp_{2n}$--Schur functions $\sp_{\bm{\mu}}(\cdot)$. In the discrete setting, i.e.\ for \emph{geometric} weights with distribution
 \begin{align*}
 \P(W_{i,j}=k) \propto (y_i y_{2n+1-j})^k \, , \qquad k=0,1,2,\dots \, ,
 \end{align*}
  our formula would read as
\begin{equation}\label{zero_lim}
\P(\tau_{2n}^{\rm flat} \leq u)
\propto
\,\,\,
\sum_{\mathclap{\substack{\bm{\mu}\in\Z^n, \\ 0\leq \mu_n \leq \dots \leq \mu_1 \leq u}}}
\,
\sp_{\bm{\mu}}(y_1,\dots,y_n) \,\sp_{\bm{\mu}}(y_{2n},\dots,y_{n+1})  \, .
\end{equation} 
The analogue of formula \eqref{zero_lim} for exponential weights, established in \eqref{eq:flatLPP1},
 appears to be new and it is worth comparing it to the corresponding formula of Baik-Rains \cite{BR01} 
 for geometrically distributed weights, 
which is given in terms of a single Schur function as
\begin{equation*}
\P(\tau_{2n}^{\rm flat} \leq u)
\propto\,\,\,
\sum_{\mathclap{\substack{\bm{\mu}\in\Z^{2n}, \\ 0\leq \mu_{2n} \leq \dots \leq \mu_1 \leq u}}}
\,
\schur_{2\bm{\mu}}(y_1,\dots,y_{2n}) \, .
\end{equation*}
  
  Formula \eqref{eq:flatLPP1} comes from the fact that the zero temperature limit of $SO_{2n+1}$-Whittaker functions are 
  continuum versions of
$Sp_{2n}$-Schur functions. This might
seem a bit peculiar as one might expect to get the corresponding {\it orthogonal} Schur functions in the limit.
 The reason for this is that $SO_{2n+1}$-Whittaker functions have an integral representation over an analogue of the  
Gelfand-Tsetlin patterns that correspond to the symplectic group $Sp_{2n}$ (see Definition \ref{Givental_Whit} and Figure
\ref{sympl_GT}), which is dual to the orthogonal group $SO_{2n+1}$.
 This is in agreement with the Casselman-Shalika \cite{CS80}
 formula which describes the (unramified) Whittaker functions of a group $G$ as characters of a
 finite dimensional representation of the dual group of $G$ 
 (see also \cite{Ch16} for a probabilistic approach): the dual group of $SO_{2n+1}$ is $Sp_{2n}$, while the dual of $GL_{n}$ is itself,
 hence $GL_{n}$-Whittaker functions are the analogue of Schur functions, while $SO_{2n+1}$-Whittaker functions are
 the analogue of $Sp_{2n}$-Schur functions.
 
{\bf Organization of the article.}
 In Section~\ref{sec:Whitt} we introduce the Whittaker functions corresponding to $GL_n(\R)$ and $SO_{2n+1}(\R)$ and record the properties
 that will be useful for this work. In Section \ref{sec:P2LLaplaceTransform} we derive formulae \eqref{p2line_gen1}, \eqref{p2line_gen2}, \eqref{p2line_gen3} for the Laplace transforms in terms of Whittaker functions and then in terms of contour integrals 
 in the first two cases. 
  In section~\ref{sec:zero}, we pass to the zero temperature limit and obtain formulae
   for the law of the point-to-line last passage percolation with exponentially distributed waiting times in terms of 
   symplectic and classical Schur functions
   and also in terms of determinantal/Pfaffian formulae. Finally, in Appendix \ref{appendixA} we show the equivalence of the parametrization we 
   adapt for Whittaker functions and the parametrization used in number theory.
\label{sec:intro}

\section{Whittaker functions}\label{sec:Whitt}
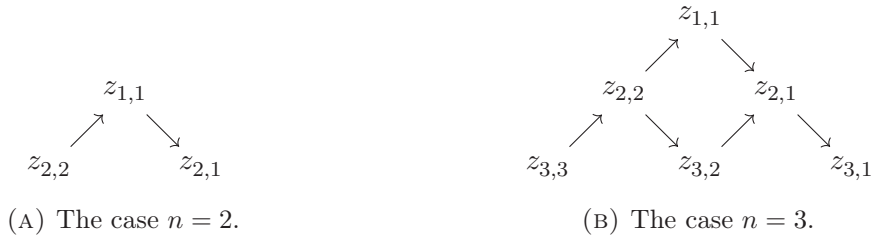
\begin{figure}
\centering

\begin{minipage}[b]{.5\linewidth}
\centering
\begin{tikzpicture}[scale=1]

\node (z11) at (1,-1) {$z_{1,1}$};
\node (z21) at (2,-2) {$z_{2,1}$};
\node (z22) at (0,-2) {$z_{2,2}$};

% arrows
\draw[->] (z22) -- (z11);
\draw[->] (z11) -- (z21);

\end{tikzpicture}
\subcaption{The case $n=2$.}
\end{minipage}%
\begin{minipage}[b]{.5\linewidth}
\centering
\begin{tikzpicture}[scale=1]

\node (z11) at (1,-1) {$z_{1,1}$};
\node (z21) at (2,-2) {$z_{2,1}$};
\node (z22) at (0,-2) {$z_{2,2}$};
\node (z31) at (3,-3) {$z_{3,1}$};
\node (z32) at (1,-3) {$z_{3,2}$};
\node (z33) at (-1,-3) {$z_{3,3}$};

% arrows
\draw[->] (z22) -- (z11);
\draw[->] (z11) -- (z21);
\draw[->] (z33) -- (z22);
\draw[->] (z22) -- (z32);
\draw[->] (z32) -- (z21);
\draw[->] (z21) -- (z31);

\end{tikzpicture}
\subcaption{The case $n=3$.}
\end{minipage}

\caption{Triangular arrays as in~\eqref{eq:triangle}. The arrows refer to formula~\eqref{eq:glEnergy}: $\mathcal{E}^{\triangle}(\bm{z})$ is the sum of all $a/b$ such that there is an arrow pointing from $a$ to $b$ in the diagram.}
\label{fig:glWhittakerFn}
\end{figure}

As we already mentioned, Whittaker functions appear in many different contexts and as a result they have various different ways of definition.
In number theory (theory of automorphic forms) they emerge as eigenfunctions of a commuting family of differential operators - the center
of the universal enveloping algebra of the associated group - that have certain invariance properties \cite{Gold06}. In that
context, Jacquet \cite{J67} introduced what is known as {\it Jacquet Whittaker function} via a certain integral representation. 
In a representation theoretic setting, Konstant \cite{Ko78} constructed Whittaker functions as a solution to the quantum Toda lattice.
In quantum cohomology and mirror symmetry, Givental \cite{Giv97} viewed Whittaker functions as solutions to a certain integrable system,
which turned out to be the quantum Toda lattice to which he constructed integral solutions via mirror symmetry. This approach
was extended further for general classical groups by Gerasimov-Lebedev-Oblezin \cite{GLO07, GLO08}. 
From that quantum cohomology setting further integral representations over geometric crystals have also emerged \cite{Lam13}, \cite{Rie12}.
A nice algebraic summary of various incarnations of Whittaker functions appears in \cite{Lam13} and an account touching upon
both algebraic and probabilistic aspects in \cite{Ch13}.
 
 Here, we will deal with $GL_n$ and $SO_{2n+1}$-Whittaker functions and the most relevant representation in our context is that
 of Givental and Gerasimov-Lebedev-Oblezin. In the following subsections we will recall these integral representations as well
 as other aspects of Whittaker functions which are important for our purposes.
\subsection{$\mathfrak{gl}_n$-Whittaker functions}
\label{sec:glWhittakerFns}
Following Givental \cite{Giv97}, see also \cite{GLO07, GLO08},
 we introduce $\mathfrak{gl}_n$-Whittaker functions, as integrals on triangular patterns.
Let $n\geq 1$, and consider a triangular array of depth $n$
\begin{equation}
\label{eq:triangle}
\bm{z} = (z_{i,j}\colon ~~ 1\leq i\leq n, ~~ 1\leq j\leq i)
\end{equation}
with positive entries; examples are given in Figure~\ref{fig:glWhittakerFn}.
Whenever the entries are interlaced, that is
\begin{align}\label{interlace}
z_{i+1,j+1} \leq z_{i,j} \leq z_{i+1,j}  \qquad \text{for $1\leq j\leq i\leq n-1$} \, ,
\end{align}
such arrays are known as \emph{Gelfand-Tsetlin} patterns. Here, we will be working with triangular arrays that do {\it not} satisfy the interlacement condition \eqref{interlace}. We will call such triangular arrays {\it geometric} Gelfand-Tsetlin patterns. Even though geometric Gelfand-Tsetlin patterns do not satisfy \eqref{interlace}, we will impose a potential $\mathcal{E}^{\triangle}(\bm{z})$
on them, which encourages interlacement. This potential is (see Figure \ref{fig:glWhittakerFn} for a graphical representation)
\begin{equation}
\label{eq:glEnergy}
\mathcal{E}^{\triangle}(\bm{z})
:= \sum_{i=1}^{n-1} \sum_{j=1}^i
\Big( \frac{z_{i+1,j+1}}{z_{i,j}} +\frac{z_{i,j}}{z_{i+1,j}} \Big)
\, .
\end{equation}
We will call {\it $i$-th row} of $\bm{z}$ the vector $(z_{i,1},\dots,z_{i,i})$ of all entries with first index equal to $i$.
The set of geometric Gelfand-Tsetlin patterns of depth $n$ and with bottom row equal to a vector $\bx\in \bbR^n$ will be denoted by 
$\mathcal{T}_{n}^{\triangle}(\bm{x})$. 
We also define the {\it type}, $\type(\bm{z}) \in \R_{+}^{n}$ as the vector whose $i$-th component is the ratio between the product of the $i$-th row elements of $\bm{z}$ and the product of its $(i-1)$-th row elements; in other words,
\begin{equation*}
\label{eq:glType}
\type(\bm{z})_i
:= \frac{\prod_{j = 1}^{i} z_{i,j}}{\prod_{j = 1}^{i-1} z_{i-1,j}} \qquad \text{for } i=1, \dots, n
\, ,
\end{equation*}
where empty products are excluded from the expressions (as we will always suppose from now on).
We now define the $\mathfrak{gl}_n$-Whittaker functions via an integral representation:
\begin{definition}
\label{def:glWhittakerFn} 
The \emph{$\mathfrak{gl}_{n}$-Whittaker function} with parameter $\bm{\alpha}\in\C^n$ is given by
\begin{equation}
\label{eq:glWhittakerFn}
\Psi^{\mathfrak{gl}_{n}}_{\bm{\alpha}}(\bm{x})
:= \int_{\mathcal{T}_{n}^{\triangle}(\bm{x})}
\type(\bm{z})^{\bm{\alpha}}
\exp\big(-\mathcal{E}^{\triangle}(\bm{z})\big)
~~\prod_{\mathclap{\substack{1\leq i<n \\ 1\leq j\leq i}}} ~\frac{\diff z_{i,j}}{z_{i,j}} \, ,
\end{equation}
for all $\bm{x}\in\R_{+}^n$, where $\mathcal{T}_{n}^{\triangle}(\bm{x})$ denotes the set of all triangular arrays $\bm{z}$ of depth $n$ with positive entries and $n$-th row equal to $\bm{x}$, and
\[
\type(\bm{z})^{\bm{\alpha}}
:= \prod_{i=1}^{n} \type(\bm{z})_i^{\alpha_i}
\, .
\]
\end{definition}
For example, $\Psi^{\mathfrak{gl}_1}_{\alpha}(x) = x^{\alpha}$, and
\begin{equation}
\label{eq:gl_2WhittakerFn}
\Psi^{\mathfrak{gl}_2}_{(\alpha_1,\alpha_2)}(x_1,x_2)
= \int_{\R_{+}} z^{\alpha_1}\Big(\frac{x_1 x_2}{z}\Big)^{\alpha_2} \exp\Big(-\frac{x_2}{z}-\frac{z}{x_1}\Big) \frac{\diff z}{z} \, .
\end{equation}

The representation of $\mathfrak{gl}_n$-Whittaker functions in Definition \ref{def:glWhittakerFn} 
has a recursive structure: setting $\Psi_{\emptyset}^{\mathfrak{gl}_{0}}(\emptyset) :=1$, it turns out that for all $n\geq 1$,
 $\bm{\alpha}=(\alpha_1,\dots,\alpha_n)\in\C^n$ and $\bm{x}=(x_1,\dots,x_n)\in\R_{+}^n$
\begin{equation}
\label{eq:glWhittakerRecurs}
\Psi_{\bm{\alpha}}^{\mathfrak{gl}_{n}}(\bm{x})
= \int_{\R_{+}^{n-1}}
Q^{\mathfrak{gl}_{n}}_{\alpha_n}(\bm{x},\bm{u})
\Psi_{\widetilde{\bm{\alpha}}}^{\mathfrak{gl}_{n-1}}(\bm{u})
\prod_{j=1}^{n-1} \frac{\diff u_j}{u_j} \, ,
\end{equation}
where $\widetilde{\bm{\alpha}}:=(\alpha_1,\dots,\alpha_{n-1})$ and the kernel is defined by
\[
Q^{\mathfrak{gl}_{n}}_{\alpha_n}(\bm{x},\bm{u})
= \bigg( \frac{\prod_{j=1}^n x_j}{\prod_{j=1}^{n-1} u_j} \bigg)^{\alpha_n}
\prod_{j=1}^{n-1} \exp\Big(-\frac{x_{j+1}}{u_j} - \frac{u_j}{x_j}\Big) \, .
\]

An easy-to-deduce property of $\mathfrak{gl}_n$-Whittaker functions, which will be useful for us, is that for $c\in\C$
\begin{equation}
\label{eq:glWhittakerFnTranslation}
\Psi^{\mathfrak{gl}_n}_{\bm{\alpha}+c}(\bm{x})
= \bigg(\prod_{i=1}^n x_i\bigg)^c
\Psi^{\mathfrak{gl}_n}_{\bm{\alpha}}(\bm{x}) \, ,
\end{equation}
where $\bm{\alpha}+c$ stands for $(\alpha_1+c,\dots,\alpha_n+c)$. 
Another property of Whittaker functions, which is not obvious from the
integral formula, but comes from their construction, is that they are invariant under the action of the corresponding Weyl group on the
(spectral) parameters $\bm{\ga}$. For the case of $\mathfrak{gl}_n$-Whittaker this means that $\Psi^{\mathfrak{gl}_n}_{\bm{\ga}}(\cdot)$
is invariant under permutation of the entries of $\bm{\ga}=(\ga_1,\dots,\ga_n)$.
 
There is a distinguished differential operator diagonalized by $\mathfrak{gl}_n$-Whittaker function, 
which is the quantum Toda hamiltonian. 
In particular, if we set $\psi_{\bm{\gl}}^{\mathfrak{gl}_n}(x_1,\dots,x_n) := \Psi^{\gln_n}_{\bm{\lambda}}(e^{x_1},\dots,e^{x_n})$, then
$ \psi^{\gln_n}_{\bm{\lambda}}$ is the unique eigenfunction with moderate growth of the operator
\begin{align*}
-\Delta+2\sum_{i=1}^{n-1} e^{-\langle\mathfrak{a}_i,\bm{x}\rangle} \, ,
\end{align*}
with eigenvalue $-|\bm{\lambda}|^2:=-\sum_{i=1}^n\lambda_i^2$. 
Here $\mathfrak{a}_i \in \mathbb{R}^n$ ($i=1,\dots,n-1$) are the positive roots of the Lie group $GL_n(\mathbb{R})$, i.e.\ $\langle\mathfrak{a}_i,\bm{x}\rangle=x_{i+1}-x_i$.
As eigenfunctions of a self-adjoint operator, Whittaker functions come with a harmonic analysis, which is very useful for our purposes and is summarized in

\begin{theorem}[\cite{STS94, KL01}]
\label{thm:Plancherel}
The integral transform
\[
\hat f(\bm{\lambda}) := \int_{\R_{+}^n} f(\bm{x}) \Psi^{\gln_n}_{\bm{\lambda}}(\bm{x}) \prod_{i=1}^n \frac{\dd x_i}{x_i}
\]
defines an isometry from $L^2(\R_{+}^n, \prod_{i=1}^n \dd x_i/x_i)$ to 
$L^2_{\sym}(\iota\R^n,s_n(\bm{\lambda}) \dd \bm{\lambda})$, where $\iota=\sqrt{-1}$, $L^2_{\sym}$ denotes the space of square integrable functions that are symmetric in their variables, and 
\begin{equation}
\label{eq:sklyaninMeasure}
s_n(\bm{\lambda})
:= \frac{1}{(2\pi\iota)^n n!} \prod_{i\neq j} \Gamma(\lambda_i-\lambda_j)^{-1}
\end{equation}
is the density of the \emph{Sklyanin measure}.
Namely, for all $f,g\in L^2(\R_{+}^n, \prod_{i=1}^n \dd x_i/x_i)$ it holds that
\[
\int_{\R_{+}^n} f(\bm{x}) \overline{g(\bm{x})} \prod_{i=1}^n\frac{\dd x_i}{x_i} 
= \int_{\iota\R^n} \hat{f}(\bm{\lambda}) \overline{\hat{g}(\bm{\lambda})} s_n(\bm{\lambda}) \dd \bm{\lambda} \, .
\]
\end{theorem}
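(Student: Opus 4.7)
The plan is to prove the theorem by induction on $n$, leveraging the recursive integral representation \eqref{eq:glWhittakerRecurs} for $\Psi^{\gln_n}_{\bm{\lambda}}$. The base case $n=1$ reduces to the classical Mellin--Plancherel theorem: since $\Psi^{\gln_1}_\lambda(x) = x^\lambda$, the change of variables $x = e^t$ converts the transform into a Fourier transform on $\R$, and the Sklyanin density $(2\pi\iota)^{-1}$ matches the usual Fourier Plancherel constant (symmetry in $\bm{\lambda}$ being automatic in one variable).

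For the inductive step, I would peel off one layer of the Whittaker function via the $Q$-operator recursion. Setting $\bm{\lambda} = (\widetilde{\bm{\lambda}}, \lambda_n)$ and applying \eqref{eq:glWhittakerRecurs} together with Fubini, one obtains
\begin{equation*}
\hat f(\widetilde{\bm{\lambda}}, \lambda_n) = \int_{\R_+^{n-1}} (T_{\lambda_n} f)(\bm{u}) \, \Psi^{\gln_{n-1}}_{\widetilde{\bm{\lambda}}}(\bm{u}) \prod_{j=1}^{n-1}\frac{\dd u_j}{u_j},
\end{equation*}
where $(T_\lambda f)(\bm{u}) := \int_{\R_+^n} f(\bm{x}) Q^{\gln_n}_\lambda(\bm{x},\bm{u}) \prod_{i=1}^n \dd x_i/x_i$. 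For each fixed $\lambda_n \in \iota\R$, the right-hand side is the $(n-1)$-dimensional Whittaker transform of $T_{\lambda_n} f$, to which the inductive hypothesis applies. The remaining task is to establish a one-variable Plancherel identity relating $\|f\|^2_{L^2(\R_+^n,\prod \dd x_i/x_i)}$ to an integral over $\lambda_n\in\iota\R$ of $\|T_{\lambda_n} f\|^2_{L^2(\R_+^{n-1})}$ weighted so that, combined with $s_{n-1}(\widetilde{\bm{\lambda}})$, it reconstitutes $s_n(\bm{\lambda})$; i.e.\ the extra factor $\prod_{j<n}\Gamma(\lambda_n-\lambda_j)^{-1}\Gamma(\lambda_j-\lambda_n)^{-1}$ must emerge.

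The crucial computation is a Mellin analysis of the explicit kernel
\begin{equation*}
Q^{\gln_n}_\lambda(\bm{x},\bm{u}) = \biggl(\frac{\prod_{i=1}^n x_i}{\prod_{j=1}^{n-1} u_j}\biggr)^\lambda \prod_{j=1}^{n-1}\exp\!\Bigl(-\frac{x_{j+1}}{u_j}-\frac{u_j}{x_j}\Bigr),
\end{equation*}
which factorises into one-dimensional Gamma-type integrals. Taking Mellin transforms in each $x_i$ turns the exponential factors into products of Gamma functions, and the composition $T_\lambda^* T_\mu$ becomes a Mellin--Barnes integral whose residue analysis yields $\delta(\lambda - \mu)$ weighted by exactly the missing Gamma factor after Weyl symmetrisation. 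This is the key lemma around which the induction pivots; it can be viewed as a specialisation of Gustafson-type integral identities underlying the Mellin--Barnes representation of $\Psi^{\gln_n}_{\bm{\lambda}}$.

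The main obstacle is proving \emph{completeness}, i.e.\ surjectivity onto $L^2_{\sym}(\iota\R^n, s_n\,\dd\bm{\lambda})$: beyond the isometric embedding $f\mapsto\hat f$, one must produce an inversion formula of the type $f(\bm{x}) = \int\hat f(\bm{\lambda})\,\overline{\Psi^{\gln_n}_{\bm{\lambda}}(\bm{x})}\,s_n(\bm{\lambda})\,\dd\bm{\lambda}$ and show every symmetric $L^2$ function arises as some $\hat f$. This demands contour shifts past the poles of the Gamma functions in $s_n$, together with uniform estimates on $\Psi^{\gln_n}_{\bm{\lambda}}(\bm{x})$ along vertical strips in $\bm{\lambda}$ obtainable from Stirling asymptotics applied to the Mellin--Barnes representation. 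Their interaction with the iterated integrals in the induction, and the need to propagate Weyl symmetry at each level, represent the most technically delicate parts of the argument.
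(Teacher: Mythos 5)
First, note that the paper does not prove this statement: Theorem \ref{thm:Plancherel} is imported wholesale from the cited references \cite{STS94, KL01}, so there is no internal proof to compare yours against; your outline has to be judged against what a complete argument would require. Your strategy --- induction on $n$ through the recursion \eqref{eq:glWhittakerRecurs}, together with a Mellin--Barnes analysis of the kernel $Q^{\gln_n}_{\lambda}$ --- is indeed the skeleton of the Kharchev--Lebedev approach, and the base case $n=1$ (Mellin--Plancherel) is correct.

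However, as written the proposal has genuine gaps rather than omitted routine details. (i) The entire analytic content of the inductive step is the claim that $T_{\lambda}^{*}T_{\mu}$ produces $\delta(\lambda-\mu)$ weighted by the cross factors $\prod_{j<n}\Gamma(\lambda_n-\lambda_j)^{-1}\Gamma(\lambda_j-\lambda_n)^{-1}$; you assert this ``residue analysis'' without performing it, and it is not a one-variable statement: since $s_n(\bm{\lambda})$ in \eqref{eq:sklyaninMeasure} does not factor as $s_{n-1}(\widetilde{\bm{\lambda}})$ times a function of $\lambda_n$ alone, the required weight depends on $\widetilde{\bm{\lambda}}$, so $T_{\lambda_n}$ is not an isometry into a fixed space and the inductive hypothesis cannot simply be ``applied for each fixed $\lambda_n$'' --- one needs the joint spectral decomposition of the $Q$-operator (the Gustafson-type identity you allude to), which is the hard core of the theorem. (ii) You never verify that $T_{\lambda_n}f$ lies in $L^2(\R_{+}^{n-1},\prod_j \dd u_j/u_j)$, which the induction requires before the $(n-1)$-dimensional statement can be invoked. (iii) The symmetry of $\hat f$ in all $n$ variables --- needed for $\hat f$ to land in $L^2_{\sym}$ --- does not follow from the recursion, which singles out $\lambda_n$; it rests on the Weyl-invariance of $\Psi^{\gln_n}_{\bm{\lambda}}$ in $\bm{\lambda}$, which the paper states without proof and whose compatibility with the iterated integrals must be checked. (iv) Completeness and surjectivity you explicitly leave open. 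In short, the proposal is a reasonable road map towards the proof in \cite{KL01}, but the two steps it identifies as crucial are precisely the ones it does not carry out.
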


The computation of certain integrals of Whittaker functions plays an important role in the theory of $L$-functions, as it is related
to certain functional equations \cite{Bum84}. One such integral formula was conjectured by Bump \cite{Bum84} and proved for $\gln_3$
and in the general case by Stade \cite{Sta02}. This is the following

\begin{theorem}
\label{thm:Stade}
 Suppose $r>0$ and $\bm{\alpha},\bm{\beta}\in\C^n$ such that $\Re (\alpha_i+\beta_j)>0$ for all $i,j$. Then
\begin{align}
\label{eq:Stade}
\int_{\R_{+}^n } e^{-r x_1} \Psi^{\gln_n}_{\bm{\alpha}}(\bm{x}) \Psi^{\gln_n}_{\bm{\beta}}(\bm{x})
\prod_{i=1}^n \frac{dx_i}{x_i}
= r^{-\sum_{k=1}^n (\alpha_k+\beta_k)} \prod_{i,j=1}^n \Gamma(\alpha_i+\beta_j) \, .
\end{align}
\end{theorem}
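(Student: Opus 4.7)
The plan is to argue by induction on $n$, with the base case $n=1$ being immediate: since $\Psi^{\gln_1}_\alpha(x)=x^{\alpha}$, the left-hand side collapses to $\int_0^\infty e^{-rx}x^{\alpha+\beta-1}\diff x = r^{-(\alpha+\beta)}\Gamma(\alpha+\beta)$, matching the right-hand side.

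For the induction step, the hypothesis $\Re(\alpha_i+\beta_j)>0$ guarantees absolute convergence and justifies every Fubini exchange below. I would first apply the recursion \eqref{eq:glWhittakerRecurs} to $\Psi^{\gln_n}_{\bm{\beta}}(\bm{x})$, introducing an auxiliary $\bm{u}\in\R_{+}^{n-1}$, to rewrite the left-hand side as
\[
\int_{\R_{+}^{n-1}} \Psi^{\gln_{n-1}}_{\widetilde{\bm{\beta}}}(\bm{u})\,F(\bm{u})\,\prod_{j=1}^{n-1}\frac{\diff u_j}{u_j},
\qquad
F(\bm{u}):=\int_{\R_{+}^{n}} e^{-rx_1}\,\Psi^{\gln_n}_{\bm{\alpha}}(\bm{x})\,Q^{\gln_n}_{\beta_n}(\bm{x},\bm{u})\,\prod_{i=1}^{n}\frac{\diff x_i}{x_i}.
\]
The translation identity \eqref{eq:glWhittakerFnTranslation} absorbs the prefactor $(\prod_j x_j)^{\beta_n}$ of $Q^{\gln_n}_{\beta_n}$ into the Whittaker function, shifting its parameters to $\bm{\alpha}+\beta_n$. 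The goal is then to establish
\[
F(\bm{u}) \;=\; r^{-(\alpha_n+\beta_n)} \prod_{i=1}^{n}\Gamma(\alpha_i+\beta_n)\prod_{j=1}^{n-1}\Gamma(\alpha_n+\beta_j) \cdot e^{-ru_1}\,\Psi^{\gln_{n-1}}_{\widetilde{\bm{\alpha}}}(\bm{u}),
\]
where the final $\Psi^{\gln_{n-1}}_{\widetilde{\bm{\alpha}}}$ is obtained by inverting \eqref{eq:glWhittakerFnTranslation} on the $\beta_n$-shift. Once this is in hand, substituting back reduces the outer $\bm{u}$-integral to exactly a Bump--Stade integral for $\gln_{n-1}$ in parameters $\widetilde{\bm{\alpha}}, \widetilde{\bm{\beta}}$. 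The induction hypothesis then supplies $r^{-\sum_{i<n}(\alpha_i+\beta_i)}\prod_{i,j<n}\Gamma(\alpha_i+\beta_j)$, and combining with the prefactor of $F$ produces the full $r^{-\sum_k(\alpha_k+\beta_k)}\prod_{i,j=1}^n\Gamma(\alpha_i+\beta_j)$, closing the induction.

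The main obstacle is the explicit evaluation of $F(\bm{u})$. Unfolding $\Psi^{\gln_n}_{\bm{\alpha}+\beta_n}(\bm{x})$ once more via \eqref{eq:glWhittakerRecurs} with a new auxiliary $\bm{v}\in\R_{+}^{n-1}$ decouples the $\bm{x}$-integration into a chain of univariate Bessel-type integrals $\int_0^\infty e^{-bx-a/x}x^{c-1}\diff x$, but these do not simplify term by term: the telescoping that produces the $2n-1$ new Gamma factors, the correct power of $r$, and the restored $e^{-ru_1}$-weighted $\gln_{n-1}$-Whittaker structure is the analytic heart of the argument. Following Stade \cite{Sta02}, one sidesteps direct integration of the Bessel chain by first establishing an intermediate $\gln_n\times\gln_{n-1}$ Bump--Stade identity, in which the second Whittaker factor is of one rank lower, and using it to interpolate between the $\gln_{n-1}$ and $\gln_n$ levels; the earlier case $n\leq 3$ handled by Bump \cite{Bum84} proceeds instead via explicit Mellin--Barnes manipulations.
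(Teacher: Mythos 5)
The paper offers no proof of Theorem~\ref{thm:Stade}: it quotes the identity as a known result of Bump~\cite{Bum84} and Stade~\cite{Sta02} and points to the bijective proof via geometric $\rsk$ in~\cite{OSZ14}, so there is no in-paper argument to measure yours against. Judged on its own terms, your induction cannot close, because the intermediate identity you declare as ``the goal'' is false. The quantity $F(\bm{u})$ is built only from $\bm{\alpha}$, $\beta_n$, $r$ and $\bm{u}$, so it cannot depend on $\beta_1,\dots,\beta_{n-1}$; yet your claimed evaluation of $F(\bm{u})$ carries the factor $\prod_{j=1}^{n-1}\Gamma(\alpha_n+\beta_j)$. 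This is not a cosmetic slip. The final answer must contain the cross terms $\Gamma(\alpha_n+\beta_j)$, $j<n$, which couple $\alpha_n$ (seen only by $F$) to $\beta_j$ (seen only by the outer factor $\Psi^{\gln_{n-1}}_{\widetilde{\bm{\beta}}}$); such terms can only be generated by a genuinely nontrivial $\bm{u}$-dependence of $F$ interacting with the outer integral. Hence $F(\bm{u})$ cannot be a constant multiple of $e^{-ru_1}\Psi^{\gln_{n-1}}_{\widetilde{\bm{\alpha}}}(\bm{u})$, and the outer integral is not ``exactly a Bump--Stade integral for $\gln_{n-1}$''. If $F$ did factor that way, the induction would deliver only $\prod_{i,j\le n-1}\Gamma(\alpha_i+\beta_j)\prod_{i\le n}\Gamma(\alpha_i+\beta_n)$ and the factors $\Gamma(\alpha_n+\beta_j)$, $j<n$, would be unaccounted for.

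The failure is visible explicitly at $n=2$. Using \eqref{eq:gl_2WhittakerFn}, \eqref{eq:glWhittakerFnTranslation} and \eqref{macdonald}, with $a_i:=\alpha_i+\beta_2$ one computes
\[
F(u)\;=\;2\,\Gamma(a_2)\,r^{-a_2/2}\,u^{\alpha_2}\int_0^{\infty} z^{a_1-1}\,(z+u)^{-a_2/2}\,K_{a_2}\!\big(2\sqrt{r(z+u)}\big)\,\diff z\,,
\]
which for $u\to\infty$ decays like $e^{-2\sqrt{ru}}$ times a power of $u$, not like $e^{-ru}u^{\alpha_1}$; so $F(u)$ is not proportional to $e^{-ru}\Psi^{\gln_1}_{\alpha_1}(u)$ for any constant. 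Beyond this, even where the sketch is structurally sound it defers the entire analytic content (``the telescoping \dots\ is the analytic heart of the argument'') to Stade's paper, whose actual argument runs through recursions for Mellin--Barnes integrals of Whittaker functions rather than the one-step reduction proposed here. As written, the proposal therefore neither proves the theorem nor correctly identifies a reduction that could.
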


A  bijective proof of identity \eqref{eq:Stade} was given in \cite{OSZ14} via the use of the geometric $\rsk$
 correspondence. Subsequently, together with 
 Theorem \ref{thm:Plancherel}, this identity played an important role in the computation of the Laplace transform of the point-to-point partition function of the log-gamma polymer.

\subsection{$\mathfrak{so}_{2n+1}$-Whittaker functions}
\label{sec:orthogonalWhittakerFns}

\begin{figure}
\centering

\begin{minipage}[b]{.5\linewidth}
\centering
\begin{tikzpicture}[scale=1]

\node (z11) at (1,-1) {$z_{1,1}$};
\node (z21) at (2,-2) {$z_{2,1}$};
\node (z22) at (0,-2) {};

% arrows
\draw[->] (z22) -- (z11);
\draw[->] (z11) -- (z21);

\draw (0,-0.6) -- (0,-2.3);

\end{tikzpicture}
\subcaption{The case $n=1$.}
\end{minipage}%
\begin{minipage}[b]{.5\linewidth}
\centering
\begin{tikzpicture}[scale=1]

\node (z11) at (1,-1) {$z_{1,1}$};
\node (z21) at (2,-2) {$z_{2,1}$};
\node (z22) at (0,-2) {};
\node (z31) at (3,-3) {$z_{3,1}$};
\node (z32) at (1,-3) {$z_{3,2}$};
\node (z41) at (4,-4) {$z_{4,1}$};
\node (z42) at (2,-4) {$z_{4,2}$};
\node (z43) at (0,-4) {};

% arrows
\draw[->] (z22) -- (z11);
\draw[->] (z11) -- (z21);
\draw[->] (z32) -- (z21);
\draw[->] (z21) -- (z31);
\draw[->] (z43) -- (z32);
\draw[->] (z32) -- (z42);
\draw[->] (z42) -- (z31);
\draw[->] (z31) -- (z41);

\draw (0,-0.6) -- (0,-4.3);

\end{tikzpicture}
\subcaption{The case $=2$.}\label{sympl_GT}
\end{minipage}

\caption{Half-triangular arrays as in~\eqref{eq:halfTriangle}. The arrows refer to formula~\eqref{eq:SOenergy}: $\mathcal{E}^{\llrighttriangle[0.15]}(\bm{z})$ is the sum of all $a/b$ such that there is an arrow pointing from $a$ to $b$ in the diagram. The convention is that all the numbers on the vertical wall are $1$, so that the only inhomogeneous addends in $\mathcal{E}^{\llrighttriangle[0.15]}(\bm{z})$ are $1/z_{2k-1,k}$ for $1\leq k\leq n$.}
\label{fig:soWhittakerFn}
\end{figure}
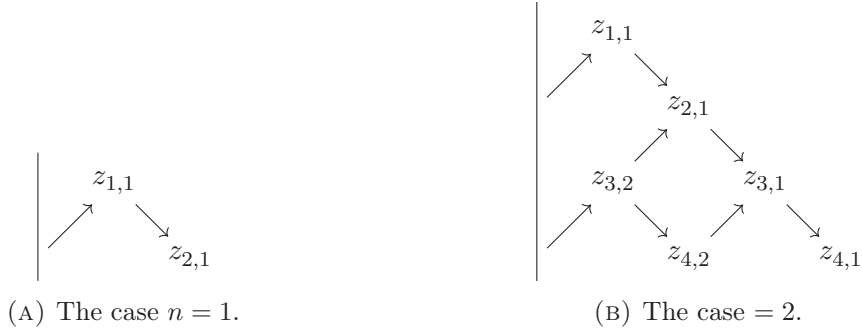

Similarly to the $\gln_n$ case, we will define $\son_{2n+1}$-Whittaker functions as integrals of half-triangular arrays.
Such definition was first given in \cite{GLO07, GLO08} but also emerged naturally in \cite{Nte17} in the study of a system of interacting
particles via {\it intertwining} and Markovian dynamics.
Let $n\geq 1$, and let us consider a half-triangular array of depth $2n$
\begin{equation}
\label{eq:halfTriangle}
\bm{z} = (z_{i,j}\colon ~~ 1\leq i\leq 2n, ~~ 1\leq j\leq \ceil{i/2})
\end{equation}
with positive entries; examples are given in Figure~\ref{fig:soWhittakerFn}.
These arrays correspond to {\it symplectic Gelfand-Tsetlin patterns} \cite{Sun90},
 when the entries are interlaced, that is
\begin{align*}
z_{i+1,j+1} \leq z_{i,j} \leq z_{i+1,j} \qquad \text{for } 1 \leq i\leq 2n-1\, , \,\,1\leq j\leq \ceil{i/2} \, ,
\end{align*}
with the understanding that $z_{i,j}$ is set to be zero when $(i,j)\notin \{1\leq i\leq 2n, ~~ 1\leq j\leq \ceil{i/2}\}$. As in the $\gln_{n}$
 case, we will
be working with half-triangular arrays that do not satisfy the interlacing condition \eqref{eq:halfTriangle} 
but are, nevertheless, encouraged 
to do so through the potential (see Figure \ref{fig:soWhittakerFn} for a graphical representation)
\begin{equation}
\label{eq:SOenergy}
\mathcal{E}^{\llrighttriangle[0.15]}(\bm{z})
:= \sum_{i=1}^{2n-1} \sum_{j=1}^{\ceil{i/2}}
\Big( \frac{z_{i+1,j+1}}{z_{i,j}} +\frac{z_{i,j}}{z_{i+1,j}} \Big)
\, ,
\end{equation}
with the convention that $z_{i,j}:=1$ if $j>\ceil{i/2}$.
 
We call {\it $i$-th row} of $\bm{z}$ the vector $(z_{i,1},\dots,z_{i,\ceil{i/2}})$ of all entries with first index equal to $i$
and we denote by $\mathcal{T}_{2n}^{\llrighttriangle[0.15]}(\bm{x})$ the set of all half-triangular arrays $\bz$ of depth $2n$ with positive entries and bottom
row equal to $\bx$. 
We also define the {\it type}, $\type(\bm{z}) \in \R_{+}^{2n}$, as the vector whose $i$-th component is the ratio between the product of the $i$-th row elements of $\bm{z}$ and the product of its $(i-1)$-th row elements; in other words,
\begin{equation*}
\label{eq:type}
\type(\bm{z})_i
:= \frac{\prod_{j = 1}^{\ceil{i/2}} z_{i,j}}{\prod_{j = 1}^{\ceil{(i-1)/2}} z_{i-1,j}} \qquad \text{for } i=1, \dots, 2n
\, .
\end{equation*}
Finally, for $\bm{\beta}=(\beta_1,\dots,\beta_n)\in\C^n$, we define $\bm{\beta}^{\pm}\in\C^{2n}$ by
\[
\bm{\beta}^{\pm} :=
(\beta_1,-\beta_1,\beta_2,-\beta_2,\dots,\beta_n,-\beta_n) \, .
\]
We are now able to define the orthogonal Whittaker functions via an integral representation~\cite{GLO07, GLO08}.
 \begin{definition}\label{Givental_Whit}
\label{def:soWhittakerFn}
The \emph{$\mathfrak{so}_{2n+1}$-Whittaker function} with parameter $\bm{\beta} \in \C^n$ is given by
\begin{equation}
\label{eq:soWhittakerFn}
\Psi^{\mathfrak{so}_{2n+1}}_{\bm{\beta}}(\bm{x})
:= \int_{\mathcal{T}_{2n}^{\llrighttriangle[0.15]}(\bm{x})}
\type(\bm{z})^{\bm{\beta}^{\pm}}
\exp\Big(-\mathcal{E}^{\llrighttriangle[0.15]}(\bm{z})\Big)
\,\,\,\,\prod_{\mathclap{\substack{1\leq i<2n, \\ 1\leq j\leq \ceil{i/2}}}} \,\,\frac{\diff z_{i,j}}{z_{i,j}} \, ,
\end{equation}
for all $\bm{x}\in\R_{+}^n$, where $\mathcal{T}_{2n}^{\llrighttriangle[0.15]}(\bm{x})$ denotes the set of all half-triangular arrays $\bm{z}$ of depth $2n$ with positive entries and $(2n)$-th row equal to $\bm{x}$, and
\[
\type(\bm{z})^{\bm{\beta}^{\pm}}
:= \prod_{i=1}^{2n} \type(\bm{z})_i^{\beta^{\pm}_i}
= \prod_{k=1}^n \type(\bm{z})_{2k-1}^{\beta_k} \type(\bm{z})_{2k}^{-\beta_k} 
\, .
\]
\end{definition}
Again, even though not obvious from this definition, $\Psi^{\mathfrak{so}_{2n+1}}_{\bm{\gb}}(\cdot)$
 is invariant under permutations and reflections
of the (spectral) parameters $\bm{\gb}=(\gb_1,\dots,\gb_n)$, where by reflection we mean multiplication of the entries of $\bm{\gb}$ by $\pm1$. 

As an example, the $\mathfrak{so}_3$-Whittaker function is given by
\begin{equation}
\label{eq:so_3WhittakerFn}
\Psi^{\mathfrak{so}_3}_{\beta}(x)
= \int_{\R_{+}} \bigg(\frac{z^2}{x}\bigg)^{\beta} \exp\Big(-\frac{1}{z}-\frac{z}{x}\Big) \frac{\diff z}{z} \, .
\end{equation}

The recursive structure of $\mathfrak{so}_{2n+1}$-Whittaker functions is
\begin{equation}
\label{eq:soWhittakerRecurs}
\Psi_{\bm{\beta}}^{\mathfrak{so}_{2n+1}}(\bm{x})
= \int_{\R_{+}^{n-1}}
Q^{\mathfrak{so}_{2n+1}}_{\beta_n}(\bm{x},\bm{u})
\Psi_{\widetilde{\bm{\beta}}}^{\mathfrak{so}_{2n-1}}(\bm{u})
\prod_{j=1}^{n-1} \frac{\diff u_j}{u_j} \, ,
\end{equation}
for all $n\geq 1$, $\bm{\beta}=(\beta_1,\dots,\beta_n)\in\C^n$,  
$\widetilde{\bm{\beta}}:=(\beta_1,\dots,\beta_{n-1})$ and $\bm{x}=(x_1,\dots,x_n)\in\R_{+}^n$,
where we set $\Psi_{\emptyset}^{\mathfrak{so}_{1}}(\emptyset) :=1$ and
 the kernel $Q^{\mathfrak{so}_{2n+1}}_{\beta_n}$ is defined by
\[
\begin{split}
Q^{\mathfrak{so}_{2n+1}}_{\beta_n}(\bm{x},\bm{u})
:= \int_{\R_{+}^n} \bigg( \frac{\prod_{j=1}^n v_j^2}{\prod_{j=1}^n x_j \prod_{j=1}^{n-1} u_j} \bigg)^{\beta_n}
&\prod_{j=1}^{n-1} \exp\Big(-\frac{v_{j+1}}{u_j} - \frac{u_j}{v_j}\Big) \\
&\times \prod_{j=1}^n \exp\Big(-\frac{x_{j+1}}{v_j} - \frac{v_j}{x_j}\Big)
\prod_{j=1}^{n} \frac{\diff v_j}{v_j} \, ,
\end{split}
\]
with the convention that $x_{n+1}:=1$.
Similarly to the $\gln_n$ case, $\son_{2n+1}$-Whittaker functions are  eigenfunctions of the quantum Toda hamiltonian of type $B$
\begin{align*}
-\Delta+2\sum_{i=1}^{n-1} e^{\langle \mathfrak{b}_i,\bm{x}\rangle} +e^{-\langle \mathfrak{b}_n,\bm{x} \rangle}.
\end{align*}
Here $\mathfrak{b}_i\in \mathbb{R}^n$ ($i=1,\dots,n$) are the positive roots of the Lie group $SO_{2n+1}(\mathbb{R})$, i.e.\ $\langle\mathfrak{b}_i,\bm{x}\rangle=x_{i+1}-x_i$ for $i=1,\dots,n-1$ and $\langle\mathfrak{b}_n,\bm{x}\rangle=x_n$.

The following integral identity, which will play an important role in our polymer analysis and is of similar nature as the Bump-Stade identity~\eqref{eq:Stade}, has been proven by Ishii and Stade~\cite{IS13}:
\begin{theorem}
\label{thm:IshiiStade}
Let $\bm{\alpha},\bm{\beta}, \in\C^n$, where $\Re(\alpha_i) > \abs{\Re(\beta_j)}$ for all $i,j$. Then
\begin{equation}
\label{eq:IshiiStade}
\int_{\R_{+}^n}
\Psi_{-\bm{\alpha}}^{\mathfrak{gl}_n}(\bm{x})
\Psi_{\bm{\beta}}^{\mathfrak{so}_{2n+1}}(\bm{x})
\prod_{i=1}^n \frac{\diff x_i}{x_i}
= \frac{\prod_{1\leq i,j\leq n}
\Gamma(\alpha_i + \beta_j)
\Gamma(\alpha_i - \beta_j)}
{\prod_{1\leq i<j\leq n} \Gamma(\alpha_i+\alpha_j)} \, .
\end{equation}
\end{theorem}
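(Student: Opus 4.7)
The plan is to proceed by induction on $n$, combining the recursive integral representations \eqref{eq:glWhittakerRecurs} and \eqref{eq:soWhittakerRecurs}. For the base case $n=1$, one has $\Psi^{\mathfrak{gl}_1}_{-\alpha}(x)=x^{-\alpha}$ and the explicit form~\eqref{eq:so_3WhittakerFn} of $\Psi^{\mathfrak{so}_3}_{\beta}$. Substituting and interchanging orders of integration, the left-hand side of \eqref{eq:IshiiStade} becomes
\[
\int_{0}^{\infty}\!\!\int_{0}^{\infty} x^{-\alpha-\beta}\, z^{2\beta}\, e^{-1/z-z/x}\,\frac{dx}{x}\,\frac{dz}{z} \, .
\]
The change of variable $t=z/x$ evaluates the inner $x$-integral as $z^{-\alpha-\beta}\Gamma(\alpha+\beta)$, and then $u=1/z$ yields $\Gamma(\alpha-\beta)$, matching the right-hand side since the denominator product in~\eqref{eq:IshiiStade} is empty for $n=1$.

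For the inductive step, assume the identity holds with $n$ replaced by $n-1$. Apply the recursion~\eqref{eq:soWhittakerRecurs} to $\Psi^{\mathfrak{so}_{2n+1}}_{\bm{\beta}}(\bm{x})$ and interchange the $\bm{x}$- and $\bm{u}$-integrals (justified by absolute convergence under $\Re(\alpha_i)>\abs{\Re(\beta_j)}$); the left-hand side of \eqref{eq:IshiiStade} reduces to
\[
\int_{\R_{+}^{n-1}} \Psi^{\mathfrak{so}_{2n-1}}_{\widetilde{\bm{\beta}}}(\bm{u})\,\mathcal{I}_n(\bm{u})\,\prod_{j=1}^{n-1}\frac{du_j}{u_j} \, , \qquad
\mathcal{I}_n(\bm{u}) := \int_{\R_{+}^n} \Psi^{\mathfrak{gl}_n}_{-\bm{\alpha}}(\bm{x})\,Q^{\mathfrak{so}_{2n+1}}_{\beta_n}(\bm{x},\bm{u})\,\prod_{i=1}^n \frac{dx_i}{x_i} \, .
\]
The crux is to show that $\mathcal{I}_n(\bm{u}) = C(\bm{\alpha},\beta_n)\,\Psi^{\mathfrak{gl}_{n-1}}_{-\widetilde{\bm{\alpha}}}(\bm{u})$, where
\[
C(\bm{\alpha},\beta_n) = \frac{\prod_{j=1}^{n} \Gamma(\alpha_n+\beta_j)\Gamma(\alpha_n-\beta_j)\,\prod_{i=1}^{n-1} \Gamma(\alpha_i+\beta_n)\Gamma(\alpha_i-\beta_n)}{\prod_{i=1}^{n-1}\Gamma(\alpha_i+\alpha_n)}
\]
is precisely the ratio of gamma factors needed to promote the right-hand side of \eqref{eq:IshiiStade} from level $n-1$ to level $n$; substituting back and invoking the induction hypothesis with parameters $\widetilde{\bm{\alpha}},\widetilde{\bm{\beta}}$ then closes the induction.

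The main obstacle is the explicit evaluation of $\mathcal{I}_n(\bm{u})$: the kernel $Q^{\mathfrak{so}_{2n+1}}_{\beta_n}$ itself involves an $n$-dimensional auxiliary integral over $\bm{v}$, and $\Psi^{\mathfrak{gl}_n}_{-\bm{\alpha}}$ is a $\binom{n}{2}$-dimensional integral, so $\mathcal{I}_n(\bm{u})$ unfolds into a cascade of one-dimensional integrals of the form $\int_0^\infty y^s e^{-a/y-by}\,dy/y = 2(a/b)^{s/2}K_s(2\sqrt{ab})$, producing modified Bessel functions at every layer. Two routes seem viable: (a) iterate~\eqref{eq:glWhittakerRecurs} as well and perform the integrations layer by layer, simplifying the Bessel-function products via their Mellin-transform identities; or (b), following~\cite{IS13}, rewrite both Whittaker functions as Mellin-Barnes contour integrals from the outset and reduce~\eqref{eq:IshiiStade} to iterated applications of Barnes' lemma (and its second form). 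The latter bypasses explicit Bessel manipulations but demands careful contour-shift bookkeeping; the hypothesis $\Re(\alpha_i)>\abs{\Re(\beta_j)}$ is precisely what guarantees both absolute convergence of the Fubini swaps and admissibility of the contour deformations.
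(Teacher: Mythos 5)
First, a point of comparison: the paper does not actually prove Theorem~\ref{thm:IshiiStade} internally. It imports the identity from \cite{IS13}, and its only original content on this point is Appendix~\ref{appendixA}, where Propositions~\ref{prop:glWhittakerFnIshiiStade} and~\ref{prop:soWhittakerFnIshiiStade} match the Givental-type integral representations used here with the number-theoretic normalizations of \cite{IS13} via the change of variables~\eqref{eq:ishiiStadeChangeOfVars}; the identity~\eqref{eq:IshiiStade} is then the $s=0$ case of \cite[Thm.~3.2]{IS13}. You are instead attempting a self-contained inductive proof, which is more ambitious. Your base case $n=1$ is correct, and the hypothesis $\Re(\alpha)>\abs{\Re(\beta)}$ is indeed exactly what makes both Gamma integrals converge there.

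The inductive step, however, has a fatal gap: everything rests on the unproved claim $\mathcal{I}_n(\bm{u})=C(\bm{\alpha},\beta_n)\,\Psi^{\mathfrak{gl}_{n-1}}_{-\widetilde{\bm{\alpha}}}(\bm{u})$, and this claim is false as stated. Since $\Psi^{\mathfrak{gl}_n}_{-\bm{\alpha}}$ is invariant under permutations of $(\alpha_1,\dots,\alpha_n)$ and the kernel $Q^{\mathfrak{so}_{2n+1}}_{\beta_n}$ does not involve $\bm{\alpha}$ at all, $\mathcal{I}_n(\bm{u})$ is a symmetric function of $(\alpha_1,\dots,\alpha_n)$. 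But $C(\bm{\alpha},\beta_n)\Psi^{\mathfrak{gl}_{n-1}}_{-\widetilde{\bm{\alpha}}}(\bm{u})$ is not symmetric under $\alpha_{n-1}\leftrightarrow\alpha_n$: the functions $\Psi^{\mathfrak{gl}_{n-1}}_{-(\alpha_1,\dots,\alpha_{n-2},\alpha_{n-1})}$ and $\Psi^{\mathfrak{gl}_{n-1}}_{-(\alpha_1,\dots,\alpha_{n-2},\alpha_{n})}$ are genuinely different functions of $\bm{u}$ (eigenfunctions of the Toda operator with different eigenvalues), not constant multiples of one another. The same objection kills the variant in which one peels off the $\mathfrak{gl}_n$ factor via~\eqref{eq:glWhittakerRecurs} and hopes to land on a single $\mathfrak{so}_{2n-1}$-Whittaker function in $\widetilde{\bm{\beta}}$. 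Consistency of the final Gamma product is no evidence for the intermediate identity: the correct propagation step necessarily produces either a combination over a Weyl orbit or, as in \cite{IS13}, an extra Mellin--Barnes contour variable that is only resolved at the end by Barnes-type lemmas. Your route (b) is essentially the actual Ishii--Stade argument, but as written the proposal neither carries it out nor supplies a substitute for the false reduction, so the induction does not close.
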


Note that, thanks to the restriction on the parameters in the above formula, the arguments of all Gamma functions on the right-hand side have positive real parts.
Let us point out that the parametrization used for Whittaker functions in number theory (in particular in \cite{IS13})
is different from ours. In Appendix \ref{appendixA} we will show the correspondence between the different parametrizations and 
the equivalence between \eqref{eq:IshiiStade} and the corresponding integral formulae in  \cite{IS13}.

\section{Laplace transforms of point-to-line partition functions}
\label{sec:P2LLaplaceTransform}

In this section, we compute the Laplace transform of the point-to-line, the point-to-half-line and the restricted point-to-half-line log-gamma polymer partition functions at any even time $2n$. We first express these as integrals involving Whittaker functions and next as contour integrals of Gamma functions.
 
In order to study the distribution of point-to-line partition functions, we first need the joint law of the point-to-point partition functions 
along a ``fixed time'' line. This can be done by using the geometric 
Robinson-Schensted-Knuth correspondence ($\grsk$) for polygonal arrays and its properties, as these were established in~\cite{NZ15}. 
We start by recalling the $\grsk$ construction via {\it local moves} and its relevant properties. We should note that 
the inductive procedure for $\grsk$ 
that we present here is a little different from the inductions presented in \cite{NZ15, OSZ14}.

\subsection{Geometric $\rsk$}
The $\grsk$ is a bijective map between polygonal arrays with positive entries, which can be defined as a sequence of \emph{local moves}.
A \emph{polygonal array} is defined to be an array $\bm{t} = \{t_{i,j} \colon (i,j)\in \mathcal{I}\}$ with positive entries and indexed by a finite set $\mathcal{I}\subseteq\N\times\N$ satisfying: if $(i,j)\in\mathcal{I}$, then $(i-1,j)\in\mathcal{I}$ if $i>1$, and $(i,j-1)\in\mathcal{I}$ if $j>1$. We denote by $\arrays$ the set of all polygonal arrays indexed by $\mathcal{I}$. We say that $(i,j)\in\mathcal{I}$ is a \emph{border index} for $\mathcal{I}$, or for $\bm{t}\in\arrays$, if not all three sites $(i,j+1),(i+1,j),(i+1,j+1)$ belong to $\mathcal{I}$; we call it \emph{outer index} if, furthermore, $\{(i,j+1),(i+1,j),(i+1,j+1)\} \cap \mathcal{I} = \emptyset$. 
We denote the set of outer indices of $\mathcal{I}$ by $\mathcal{I}_{\rm out}$. 
See Figure~\ref{fig:polygonalArray} for a graphical interpretation of $\bm{t}$ and its (outer) border indices.

\begin{figure}
\centering
\begin{minipage}[b]{.5\linewidth}
\centering
\begin{tikzpicture}[scale=0.8, every node/.style={transform shape}]
\node (t11) at (1,-1) {$t_{1,1}$};
\node (t12) at (2,-1) {$t_{1,2}$};
\node (t13) at (3,-1) {$t_{1,3}$};
\node (t14) at (4,-1) {$t_{1,4}$};
\node[draw,circle,inner sep=1pt] (t15) at (5,-1) {$t_{1,5}$};
\node[draw,circle,inner sep=1pt] (t16) at (6,-1) {$\red{t_{1,6}}$};

\node (t21) at (1,-2) {$t_{2,1}$};
\node (t22) at (2,-2) {$t_{2,2}$};
\node (t23) at (3,-2) {$t_{2,3}$};
\node[draw,circle,inner sep=1pt] (t24) at (4,-2) {$t_{2,4}$};
\node[draw,circle,inner sep=1pt] (t25) at (5,-2) {$\red{t_{2,5}}$};

\node (t31) at (1,-3) {$t_{3,1}$};
\node (t32) at (2,-3) {$t_{3,2}$};
\node[draw,circle,inner sep=1pt] (t33) at (3,-3) {$t_{3,3}$};
\node[draw,circle,inner sep=1pt] (t34) at (4,-3) {$\red{t_{3,4}}$};

\node (t41) at (1,-4) {$t_{4,1}$};
\node[draw,circle,inner sep=1pt] (t42) at (2,-4) {$t_{4,2}$};
\node[draw,circle,inner sep=1pt] (t43) at (3,-4) {$\red{t_{4,3}}$};

\node[draw,circle,inner sep=1pt] (t51) at (1,-5) {$t_{5,1}$};
\node[draw,circle,inner sep=1pt] (t52) at (2,-5) {$\red{t_{5,2}}$};

\node[draw,circle,inner sep=1pt] (t61) at (1,-6) {$\red{t_{6,1}}$};

\draw[->] (0.4,-0.4) -- (t11);

\draw[->] (t11) -- (t12);
\draw[->] (t12) -- (t13);
\draw[->] (t13) -- (t14);
\draw[->] (t14) -- (t15);
\draw[->] (t15) -- (t16);

\draw[->] (t21) -- (t22);
\draw[->] (t22) -- (t23);
\draw[->] (t23) -- (t24);
\draw[->] (t24) -- (t25);

\draw[->] (t31) -- (t32);
\draw[->] (t32) -- (t33);
\draw[->] (t33) -- (t34);

\draw[->] (t41) -- (t42);
\draw[->] (t42) -- (t43);

\draw[->] (t51) -- (t52);

\draw[->] (t11) -- (t21);
\draw[->] (t21) -- (t31);
\draw[->] (t31) -- (t41);
\draw[->] (t41) -- (t51);
\draw[->] (t51) -- (t61);

\draw[->] (t12) -- (t22);
\draw[->] (t22) -- (t32);
\draw[->] (t32) -- (t42);
\draw[->] (t42) -- (t52);

\draw[->] (t13) -- (t23);
\draw[->] (t23) -- (t33);
\draw[->] (t33) -- (t43);

\draw[->] (t14) -- (t24);
\draw[->] (t24) -- (t34);

\draw[->] (t15) -- (t25);
\end{tikzpicture}
\subcaption{Triangular array}
\label{subfig:triangularArray}
\end{minipage}%
\begin{minipage}[b]{.5\linewidth}
\centering
\begin{tikzpicture}[scale=0.8, every node/.style={transform shape}]
\node (t11) at (1,-1) {$t_{1,1}$};
\node (t12) at (2,-1) {$t_{1,2}$};
\node (t13) at (3,-1) {$t_{1,3}$};
\node (t14) at (4,-1) {$t_{1,4}$};
\node (t15) at (5,-1) {$t_{1,5}$};
\node[draw,circle,inner sep=1pt] (t16) at (6,-1) {$t_{1,6}$};

\node (t21) at (1,-2) {$t_{2,1}$};
\node (t22) at (2,-2) {$t_{2,2}$};
\node[draw,circle,inner sep=1pt] (t23) at (3,-2) {$t_{2,3}$};
\node[draw,circle,inner sep=1pt] (t24) at (4,-2) {$t_{2,4}$};
\node[draw,circle,inner sep=1pt] (t25) at (5,-2) {$t_{2,5}$};
\node[draw,circle,inner sep=1pt] (t26) at (6,-2) {$\red{t_{2,6}}$};

\node (t31) at (1,-3) {$t_{3,1}$};
\node (t32) at (2,-3) {$t_{3,2}$};
\node[draw,circle,inner sep=1pt] (t33) at (3,-3) {$t_{3,3}$};

\node (t41) at (1,-4) {$t_{4,1}$};
\node (t42) at (2,-4) {$t_{4,2}$};
\node[draw,circle,inner sep=1pt] (t43) at (3,-4) {$t_{4,3}$};

\node (t51) at (1,-5) {$t_{5,1}$};
\node[draw,circle,inner sep=1pt] (t52) at (2,-5) {$t_{5,2}$};
\node[draw,circle,inner sep=1pt] (t53) at (3,-5) {$\red{t_{5,3}}$};

\node[draw,circle,inner sep=1pt] (t61) at (1,-6) {$t_{6,1}$};
\node[draw,circle,inner sep=1pt] (t62) at (2,-6) {$\red{t_{6,2}}$};

\draw[->] (0.4,-0.4) -- (t11);

\draw[->] (t11) -- (t12);
\draw[->] (t12) -- (t13);
\draw[->] (t13) -- (t14);
\draw[->] (t14) -- (t15);
\draw[->] (t15) -- (t16);

\draw[->] (t21) -- (t22);
\draw[->] (t22) -- (t23);
\draw[->] (t23) -- (t24);
\draw[->] (t24) -- (t25);
\draw[->] (t25) -- (t26);

\draw[->] (t31) -- (t32);
\draw[->] (t32) -- (t33);

\draw[->] (t41) -- (t42);
\draw[->] (t42) -- (t43);

\draw[->] (t51) -- (t52);
\draw[->] (t52) -- (t53);

\draw[->] (t61) -- (t62);

\draw[->] (t11) -- (t21);
\draw[->] (t21) -- (t31);
\draw[->] (t31) -- (t41);
\draw[->] (t41) -- (t51);
\draw[->] (t51) -- (t61);

\draw[->] (t12) -- (t22);
\draw[->] (t22) -- (t32);
\draw[->] (t32) -- (t42);
\draw[->] (t42) -- (t52);
\draw[->] (t52) -- (t62);

\draw[->] (t13) -- (t23);
\draw[->] (t23) -- (t33);
\draw[->] (t33) -- (t43);
\draw[->] (t43) -- (t53);

\draw[->] (t14) -- (t24);

\draw[->] (t15) -- (t25);

\draw[->] (t16) -- (t26);
\end{tikzpicture}
\subcaption{Generic polygonal array}
\label{subfig:polygonalArray}
\end{minipage}
\caption{Examples of polygonal array, where all the border indices are circled and the outer border indices are also highlighted in red. The arrows refer to formula~\eqref{eq:energy}: $\mathcal{E}(\bm{t})$ is the sum of all $a/b$ such that there is an arrow pointing from $a$ to $b$ in the diagram; the arrow pointing to $t_{1,1}$ corresponds to the term $1/t_{1,1}$.}
\label{fig:polygonalArray}
\end{figure}
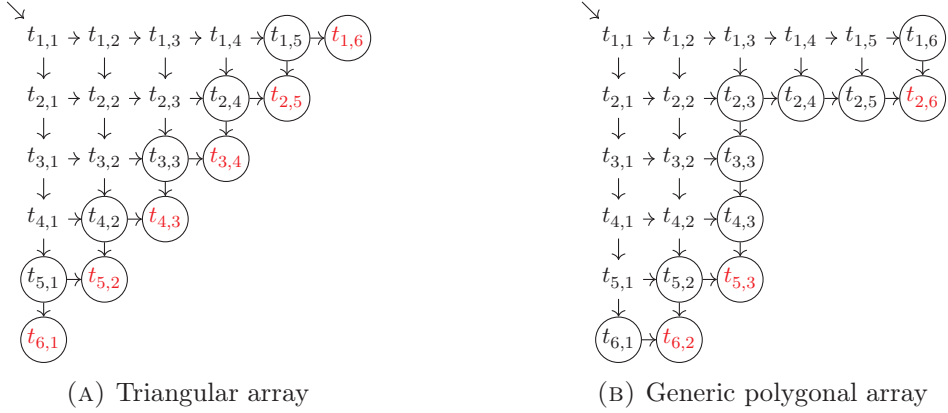

We now construct the $\grsk$ map explicitly. Define first the following maps acting on $\bm{w}\in\arrays$, with the convention that $w_{0,j} = w_{i,0} =0$ but $w_{1,0} + w_{0,1} = 1$:
\begin{itemize}
\item
for all $(i,j)\in\mathcal{I}$, $a_{i,j}$ replaces $w_{i,j}$ with
\[
w_{i,j}(w_{i-1,j}+w_{i,j-1})
\]
and leaves all other entries of $\bm{w}$ unchanged;
\item
for all \emph{non}-border indices $(i,j)\in\mathcal{I}$, $b_{i,j}$ replaces $w_{i,j}$ with
\[
\frac{1}{w_{i,j}} (w_{i-1,j} + w_{i,j-1}) \bigg(\frac{1}{w_{i+1,j}} + \frac{1}{w_{i,j+1}} \bigg)^{-1}
\]
and leaves all other entries of $\bm{w}$ unchanged.
\end{itemize}
Operations $b_{ij}$ and $a_{ij}$ are related to Bender-Knuth transformations and in the setting of geometric $\rsk$ correspondence 
were first introduced by Kirillov \cite{K01}.  
We now define the operation
\[
\rho_{i,j} :=
\bigcirc_{k\geq 1} b_{i-k,j-k} \circ a_{i,j} \, ,
\]
where $\bigcirc_{k\geq 1}$ indicates a sequence of compositions in which
 $b_{i-k,j-k}$ appears in the composition only if $(i-k,j-k)\in\mathcal{I}$.
The maps $a_{i,j}$'s and $b_{i,j}$'s are called \emph{local moves} because they act on arrays only locally (see Figure \ref{subfig:localMoves}). It is also clear from this figure that two local moves indexed by $(i,j)$ and $(i',j')$ commute if $\abs{i-i'} + \abs{j-j'} > 1$. Consequently, the order of the sequence of local moves making up a single $\rho_{i,j}$ does not matter. Moreover, $\rho_{i,j}$ and $\rho_{i',j'}$ commute whenever the diagonals that $(i,j)$ and $(i',j')$ belong to are neither the same nor consecutive, i.e.\ $\abs{(j-i)-(j'-i')}>1$.
 
\begin{figure}
\centering
\begin{minipage}[b]{.5\linewidth}
\centering
\begin{tikzpicture}[scale=0.7]
\foreach \x in {(1,-1),(2,-1),(3,-1),(4,-1),(5,-1),(6,-1),(1,-2),(2,-2),(3,-2),(4,-2),(5,-2),(6,-2),(1,-3),(2,-3),(3,-3),(4,-3),(5,-3),(6,-3),(1,-4),(2,-4),(3,-4),(4,-4),(5,-4),(1,-5),(2,-5),(3,-5),(1,-6),(2,-6),(3,-6)}{
	\node[draw,circle,inner sep=1pt,outer sep=2pt,fill] at \x {};
	}

\node[draw,circle,inner sep=2pt,outer sep=2pt,fill,red] (t22) at (2,-2) {};
\node[inner sep=1pt,outer sep=2pt] (t21) at (1,-2) {};
\node[inner sep=1pt,outer sep=2pt] (t12) at (2,-1) {};
\node[inner sep=1pt,outer sep=2pt] (t23) at (3,-2) {};
\node[inner sep=1pt,outer sep=2pt] (t32) at (2,-3) {};
\draw[->, red, thick] (t21) -- (t22);
\draw[->, red, thick] (t12) -- (t22);
\draw[->, red, thick] (t23) -- (t22);
\draw[->, red, thick] (t32) -- (t22);
\node at (2.6,-1.5) {\footnotesize \textcolor{red}{$b_{2,2}$}};

\node[draw,circle,inner sep=2pt,outer sep=2pt,fill,blue] (t15) at (5,-1) {};
\node[inner sep=1pt,outer sep=2pt] (t14) at (4,-1) {};
\node[inner sep=1pt,outer sep=2pt] (t25) at (5,-2) {};
\node[inner sep=1pt,outer sep=2pt] (t16) at (6,-1) {};
\draw[->, blue, thick] (t14) -- (t15);
\draw[->, blue, thick] (t25) -- (t15);
\draw[->, blue, thick] (t16) -- (t15);
\node at (5,-0.5) {\footnotesize \textcolor{blue}{$b_{1,5}$}};

\node[draw,circle,inner sep=2pt,outer sep=2pt,fill,green] (t53) at (3,-5) {};
\node[inner sep=1pt,outer sep=2pt] (t52) at (2,-5) {};
\node[inner sep=1pt,outer sep=2pt] (t43) at (3,-4) {};
\draw[->, green, thick] (t52) -- (t53);
\draw[->, green, thick] (t43) -- (t53);
\node at (3.7,-4.8) {\footnotesize \textcolor{green}{$a_{5,3}$}};

\end{tikzpicture}
\subcaption{Commuting local moves}
\label{subfig:localMoves}
\end{minipage}%
\begin{minipage}[b]{.5\linewidth}
\centering
\begin{tikzpicture}[scale=0.7]
\foreach \x in {(1,-1),(2,-1),(3,-1),(4,-1),(5,-1),(6,-1),(1,-2),(2,-2),(3,-2),(4,-2),(5,-2),(6,-2),(1,-3),(2,-3),(3,-3),(4,-3),(5,-3),(6,-3),(1,-4),(2,-4),(3,-4),(4,-4),(5,-4),(1,-5),(2,-5),(3,-5),(1,-6),(2,-6),(3,-6)}{
	\node[draw,circle,inner sep=1pt,outer sep=2pt,fill] at \x {};
	}

\node[draw,circle,inner sep=2pt,outer sep=2pt,fill,red] (t43) at (3,-4) {};
\node[draw,circle,inner sep=2pt,outer sep=2pt,fill,red] (t32) at (2,-3) {};
\node[draw,circle,inner sep=2pt,outer sep=2pt,fill,red] (t21) at (1,-2) {};
\node[inner sep=1pt,outer sep=2pt] (t42) at (2,-4) {};
\node[inner sep=1pt,outer sep=2pt] (t31) at (1,-3) {};
\node[inner sep=1pt,outer sep=2pt] (t44) at (4,-4) {};
\node[inner sep=1pt,outer sep=2pt] (t33) at (3,-3) {};
\node[inner sep=1pt,outer sep=2pt] (t22) at (2,-2) {};
\node[inner sep=1pt,outer sep=2pt] (t11) at (1,-1) {};

\draw[->, red, thick] (t42) -- (t43);
\draw[->, red, thick] (t33) -- (t43);
\draw[->, red, thick] (t31) -- (t32);
\draw[->, red, thick] (t22) -- (t32);
\draw[->, red, thick] (t33) -- (t32);
\draw[->, red, thick] (t42) -- (t32);
\draw[->, red, thick] (t31) -- (t21);
\draw[->, red, thick] (t22) -- (t21);
\draw[->, red, thick] (t11) -- (t21);

\draw[thick,dashed,red] (0.5,-1.5) -- (3.5,-4.5);
\node at (3.8,-4.8) {\footnotesize \textcolor{red}{$\rho_{4,3}$}};

\node[draw,circle,inner sep=2pt,outer sep=2pt,fill,blue] (t45) at (5,-4) {};
\node[draw,circle,inner sep=2pt,outer sep=2pt,fill,blue] (t34) at (4,-3) {};
\node[draw,circle,inner sep=2pt,outer sep=2pt,fill,blue] (t23) at (3,-2) {};
\node[draw,circle,inner sep=2pt,outer sep=2pt,fill,blue] (t12) at (2,-1) {};
\node[inner sep=1pt,outer sep=2pt] (t35) at (5,-3) {};
\node[inner sep=1pt,outer sep=2pt] (t24) at (4,-2) {};
\node[inner sep=1pt,outer sep=2pt] (t13) at (3,-1) {};

\draw[->, blue, thick] (t44) -- (t45);
\draw[->, blue, thick] (t35) -- (t45);
\draw[->, blue, thick] (t44) -- (t34);
\draw[->, blue, thick] (t35) -- (t34);
\draw[->, blue, thick] (t33) -- (t34);
\draw[->, blue, thick] (t24) -- (t34);
\draw[->, blue, thick] (t33) -- (t23);
\draw[->, blue, thick] (t24) -- (t23);
\draw[->, blue, thick] (t13) -- (t23);
\draw[->, blue, thick] (t22) -- (t23);
\draw[->, blue, thick] (t11) -- (t12);
\draw[->, blue, thick] (t22) -- (t12);
\draw[->, blue, thick] (t13) -- (t12);

\draw[thick,dashed,blue] (1.5,-0.5) -- (5.5,-4.5);
\node at (5.8,-4.8) {\footnotesize \textcolor{blue}{$\rho_{4,5}$}};

\end{tikzpicture}
\subcaption{Commuting $\rho_{i,j}$'s}
\label{subfig:diagonalMaps}
\end{minipage}%

\caption{Graphical representation of how local moves $a_{i,j}$'s and $b_{i,j}$'s and maps $\rho_{i,j}$'s that compose the $\grsk$ correspondence act on a polygonal array. The arrows point from a node involved in the definition of a local move to a colored node, which corresponds to the entry that is modified by the local move. One can see that any two local moves commute if they are indexed by lattice vertices that are \emph{not} nearest neighbors, and any two maps $\rho_{i,j}$'s commute if they are indexed by vertices that do \emph{not} belong to neighboring diagonals.}
\label{fig:gRSKlocalMoves}
\end{figure}
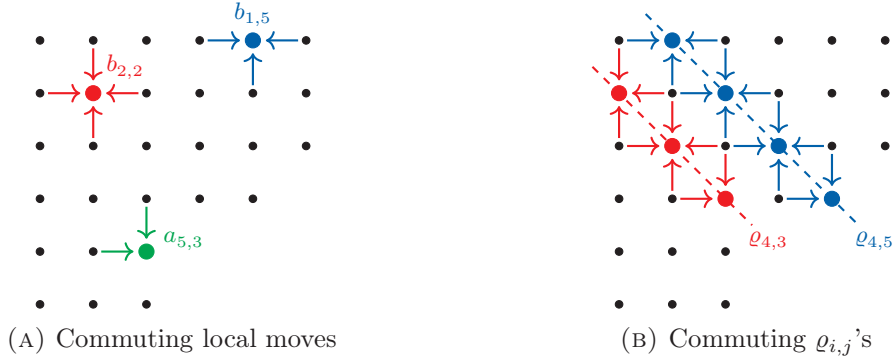
Given a set of indices $\mathcal{I}$, we construct the $\grsk \colon \arrays \to \arrays$ inductively as follows:
We start by $\grsk(\emptyset):= \emptyset$. Let $\mathcal{I}_{\rm out}$ be the set of outer indices of $\mathcal{I}$ and 
$\mathcal{I}^{\circ}:=\mathcal{I}\setminus \mathcal{I}_{\rm out} $. For $\bm{w}\in \arrays$ we define
\begin{align}\label{rsk_constr}
\grsk(\bm{w}) := \bigcirc_{(i,j)\in \mathcal{I}_{\rm out}} \, \rho_{i,j} \big(\grsk(\bm{w}^\circ)\,\sqcup \bm{w}^{\rm out} \big) \, ,
\end{align}
where $\bm{w}^\circ=\{w_{i,j}\colon (i,j)\in \mathcal{I}^\circ\}$, 
$\bm{w}^{\rm out}=\{w_{i,j}\colon (i,j)\in \mathcal{I}_{\rm out}\}$ and $\sqcup$ denotes concatenation.
In words, $\grsk$ is first applied to the array $\bm{w^\circ}$, ignoring entries $\bm{w}^{\rm out}$,
 and then the output is concatenated with the entries $\bm{w}^{\rm out}$ and subsequently
  the maps $\rho_{i,j}, (i,j)\in \cI_{\rm out}$ are applied
  to the new array.
 
We note that since distinct outer indices are never on the same diagonal nor on consecutive diagonals, all 
$\rho_{i,j}$'s indexed by the outer indices of a given array commute and so 
the order in which these maps are composed in \eqref{rsk_constr} is irrelevant.
 
In order to state the main properties of $\grsk$, it is convenient to introduce the following definitions. We denote by $\tau_k$ the product of all 
 elements on the $k$-th diagonal of $\bm{t}$:
\begin{equation}
\label{eq:tau}
\tau_k:=\prod_{\substack{(i,j)\in\mathcal{I} , \\ j-i=k}} t_{i,j} \, .
\end{equation}
We set the \emph{energy} of $\bm{t}$ to be
\begin{equation}
\label{eq:energy}
\mathcal{E}(\bm{t})
:= \frac{1}{t_{1,1}}
+ \sum_{(i,j)\in\mathcal{I}} \frac{t_{i-1,j}+t_{i,j-1}}{t_{i,j}}
\, ,
\end{equation}
with the convention that $t_{i,j}:=0$ when $(i,j)\notin \mathcal{I}$. See Figure~\ref{fig:polygonalArray} for a graphical interpretation of the energy of $\bm{t}$.
\begin{proposition}[{\cite[Prop.~2.6, 2.7]{NZ15}}]
\label{prop:gRSKproperties}
Let $\bm{w}\in\arrays$, $\bm{t} := \grsk(\bm{w})$ and let $(p,q)\in \mathcal{I}$ be a border index. Then
\begin{enumerate}
\item
\label{prop:gRSKproperties_partitionFn}
If $\Pi_{p,q}$ is the set of all directed paths from $(1,1)$ to $(p,q)$, then
\[
t_{p,q} = \sum_{\pi\in\Pi_{p,q}} \prod_{(i,j)\in \pi} w_{i,j} \, .
\]
\item
\label{prop:gRSKproperties_type}
If $(p-1,q)$ is a border index or $p=1$, then
\[
\prod_{j=1}^{q} w_{p,j}
= \frac{\tau_{q-p}}{\tau_{q-p+1}}
\, .
\]
Analogously, if $(p,q-1)$ is a border index or $q=1$, then
\[
\prod_{i=1}^{p} w_{i,q}
= \frac{\tau_{q-p}}{\tau_{q-p-1}}
\, .
\]
\item
\label{prop:gRSKproperties_invWeights}
It holds that
\[
\sum_{(i,j)\in\mathcal{I}} \frac{1}{w_{i,j}}
= \mathcal{E}(\bm{t})
\, .
\]
\item
\label{prop:gRSKproperties_Jacobian}
The transformation
\[
(\log w_{i,j}, \, (i,j)\in\mathcal{I})
\mapsto
(\log t_{i,j}, \, (i,j)\in\mathcal{I})
\]
has Jacobian equal to $\pm 1$.
\end{enumerate}
\end{proposition}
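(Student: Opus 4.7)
The plan is to prove the four assertions simultaneously by induction on $|\mathcal{I}|$, exploiting the recursive construction \eqref{rsk_constr} of $\grsk$. The base case $\mathcal{I}=\{(1,1)\}$ is immediate: $\grsk$ is the identity, $\Pi_{1,1}=\{(1,1)\}$, $\mathcal{E}(\bm{t})=1/t_{1,1}$, and the Jacobian is $1$, so all four items reduce to tautologies.

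For the inductive step, set $\bm{s}:=\grsk(\bm{w}^\circ)\sqcup\bm{w}^{\mathrm{out}}$, so that $\bm{t}=\bigcirc_{(i,j)\in\mathcal{I}_{\mathrm{out}}}\rho_{i,j}(\bm{s})$; by induction the four properties already hold on $\mathcal{I}^\circ$ for $\grsk(\bm{w}^\circ)$. Since distinct outer indices lie on diagonals separated by at least one gap, the maps $\rho_{i,j}$ indexed by $\mathcal{I}_{\mathrm{out}}$ pairwise commute, so it suffices to fix a single outer index $(i,j)$ and analyse $\rho_{i,j}$ in isolation. The crucial structural observation is that $\rho_{i,j}$ only modifies the entries $(i-k,j-k)$ for $k\geq 0$, i.e.\ those on the single diagonal through $(i,j)$.

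To establish (1), I first note that $a_{i,j}$ sends the newly inserted value $s_{i,j}=w_{i,j}$ to $w_{i,j}(s_{i-1,j}+s_{i,j-1})$. Since $(i,j)\in\mathcal{I}_{\mathrm{out}}$, the neighbours $(i-1,j)$ and $(i,j-1)$, whenever they belong to $\mathcal{I}$, are themselves border indices of $\mathcal{I}^\circ$, and the induction hypothesis identifies $s_{i-1,j}$ and $s_{i,j-1}$ with the point-to-point partition functions at those sites; since every path in $\Pi_{i,j}$ enters $(i,j)$ from one of these two neighbours, $a_{i,j}$ installs the correct partition function at $(i,j)$. The subsequent $b$-moves in $\rho_{i,j}$ do not touch the entry $(i,j)$, so (1) holds at $(i,j)$; at every border index of $\mathcal{I}^\circ$ that remains a border index of $\mathcal{I}$, the value is unchanged and the assertion is inherited from the induction hypothesis. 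Property (2) is verified by a direct computation of how $a_{i,j}$ and each $b_{i-k,j-k}$ act on the diagonal products $\tau_{j-i}$ and $\tau_{j-i\pm 1}$: the telescoping of the $b$-moves along the diagonal upgrades the ratio identity for $\mathcal{I}^\circ$ into the corresponding one for $\mathcal{I}$, with the extra factor being precisely the row or column product of $w$'s prescribed in the statement. Property (4) follows because each $a_{i,j}$ and each $b_{i,j}$ is, in logarithmic coordinates, a map whose Jacobian is $\pm 1$ (the $a$-maps being unipotent affine shifts and the $b$-maps elementary birational transformations whose logarithmic linearization can be read off directly), so any composition of them has Jacobian $\pm 1$ as well.

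The step I expect to demand the most care is property (3), the energy identity $\sum 1/w_{i,j}=\mathcal{E}(\bm{t})$. The natural strategy is to show that the quantity ``sum of $1/w$-entries minus the energy of the current array'' is invariant under each elementary step of the construction of $\grsk$: one verifies by direct algebra that each $b_{i-k,j-k}$ preserves $\mathcal{E}$ when applied inside $\rho_{i,j}$, whereas $a_{i,j}$, together with the introduction of the outer entry $w_{i,j}$, produces exactly the additional contribution $1/w_{i,j}$ to $\mathcal{E}(\bm{t})$. This is a diagonal-by-diagonal telescoping that sits at the combinatorial heart of geometric $\rsk$; carrying it out rigorously amounts to careful bookkeeping of which summands in $\mathcal{E}(\bm{t})$ are created or cancelled by each $b_{i-k,j-k}$, and once it is done the induction closes and all four properties follow at once.
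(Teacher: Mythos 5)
The paper offers no proof of this proposition: it is imported wholesale from \cite[Prop.~2.6, 2.7]{NZ15}, so there is nothing in-paper to compare against. Your inductive local-move argument is, in substance, the argument of that reference (and of \cite{OSZ14} for square arrays), and its outline is sound. The base case, the commutativity and locality of the $\rho_{i,j}$'s, the recursion $t_{i,j}=w_{i,j}(t_{i-1,j}+t_{i,j-1})$ for (1) (note that non-outer border indices of $\mathcal{I}$ never lie on the diagonal of an outer index, so their entries are indeed untouched), the triangular-Jacobian argument for (4), and the mechanism for (3) --- each $b_{i,j}$ swaps the two energy summands $\frac{w_{i-1,j}+w_{i,j-1}}{w_{i,j}}$ and $w_{i,j}\big(\frac{1}{w_{i+1,j}}+\frac{1}{w_{i,j+1}}\big)$, while $a_{i,j}$ at an outer cell turns its energy term into $1/w_{i,j}$ and nothing later disturbs it --- are all correct.

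The one place where you assert more than you establish is item (2). The ``telescoping along the diagonal'' is not a statement about $\tau_{j-i}$ alone: if you write $C_k:=s_{i-k-1,j-k}+s_{i-k,j-k-1}$ for the array $\bm{s}$ at the moment $\rho_{i,j}$ is applied at an outer index $(i,j)$, the product of the new entries on the diagonal through $(i,j)$ telescopes to
\[
\tau_{j-i}(\bm{t})
= w_{i,j}\,C_m\,
\frac{\prod_{k\ge 1}s_{i-k+1,j-k}\;\prod_{k\ge 1}s_{i-k,j-k+1}}
{\tau_{j-i}\big(\grsk(\bm{w}^\circ)\big)}
= w_{i,j}\,
\frac{\tau_{j-i-1}(\bm{t})\,\tau_{j-i+1}(\bm{t})}
{\tau_{j-i}\big(\grsk(\bm{w}^\circ)\big)}\,,
\]
where the boundary term $C_m$ is exactly the missing first cell of one of the two adjacent diagonals. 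Closing the induction from here requires the equivalent product form \eqref{eq:gRSKprodSubarray}, $\prod_{a\le p,\,b\le q}w_{a,b}=\tau_{q-p}$, applied at the three neighbouring border indices $(i,j-1)$, $(i-1,j)$ and $(i-1,j-1)$; in other words, the rectangle-product identity, not the ratio identity, is the statement you should carry through the induction. This is bookkeeping rather than a flaw, but it is precisely the computation your sketch leaves to the reader. A final caveat on the statement itself: the $p=1$ clause of (2) is only correct when $(1,q)$ is the last cell of the first row (so that $\tau_{q-p+1}=\tau_q$ is an empty product), which is the only case the paper ever uses; your proof should make that hypothesis explicit.
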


Property~\ref{prop:gRSKproperties_partitionFn} explains how the point-to-point polymer partition functions can be expressed in terms of the 
$\grsk$ correspondence. In light of this connection, the other properties turn out to be useful in computations related to the log-gamma polymer, as it will become clear soon. We also remark that property~\ref{prop:gRSKproperties_type} is easily seen to be equivalent to the following: if $(p,q)$ is a border index, then
\begin{equation}
\label{eq:gRSKprodSubarray}
\prod_{i=1}^p \prod_{j=1}^q w_{i,j} = \tau_{q-p} \, .
\end{equation}

\subsection{Point-to-line polymer}
\label{subsec:P2LpartFn}
The exactly solvable parametrization of the inverse-gamma variables in the point-to-line geometry is given by
\begin{definition}\label{def:log}
An $(\bm{\ga}, \bm{\gb},\gamma)$-log-gamma measure on the lattice $\{(i,j)\colon i+j\leq 2n+1\}$ is the law of a family of 
independent random variables $\{W_{i,j}\colon i+j\leq 2n+1\}$ distributed as follows:
\begin{equation}
\label{eq:logGammaDistribution}
W_{i,j}^{-1} \sim
\begin{cases}
{\rm Gamma}(\alpha_i + \beta_j + \gamma,1) &1\leq i,j\leq n \, , \\
{\rm Gamma}(\alpha_i + \alpha_{2n-j+1},1) &1\leq i\leq n\, , \,\, n < j\leq 2n-i+1 \, , \\
{\rm Gamma}(\beta_{2n-i+1} + \beta_j,1) &1\leq j\leq n \, , \,\, n < i \leq 2n-j+1 \, ,
\end{cases}
\end{equation}
for some $\bm{\alpha},\bm{\beta}\in\R_{+}^n$ and $\gamma\geq 0$. 
\end{definition}

\begin{remark}{\rm
The choice of the parameters in Definition \ref{subsec:P2LpartFn} is tailored so that it fits the link between Whittaker functions and $\grsk$. In particular, this is due to property $(ii)$ in Proposition \ref{prop:gRSKproperties} and the presence of the type of geometric Gelfand-Tsetlin patterns in the integral
formula for Whittaker functions, cf.\ Definitions \ref{def:glWhittakerFn} and \ref{Givental_Whit}. This will become clear in the proofs of Lemmas
\ref{lemma:P2PjointDistr} and \ref{lemma:P2PhalfJointDistr} below.
    We have also included an extra parameter $\gamma$ in the distribution of the weights $W_{i,j}$ for $1\leq i,j\leq n$. This might seem rather unnatural, but it will turn out to be useful in the proof of Theorem~\ref{thm:P2LcontourInt} to obtain contour integral formulae. More specifically, the Plancherel theorem for $\mathfrak{gl}_n$-Whittaker functions can be applied in~\eqref{eq:plancherel1step} thanks to estimation~\eqref{eq:estimationL2}, which in turn relies on the presence of the parameter $\gamma$ in~\eqref{eq:P2LLaplaceTransf}.
}
\end{remark}

We first compute the joint law of all the point-to-point partition functions at a fixed time horizon.
The next proposition is a modification of ~\cite[{Thm 3.5}]{NZ15}, which accommodates the extra parameter $\gamma$ in
 Definition \ref{def:log}. 
\begin{lemma}
\label{lemma:P2PjointDistr} 
For the $(\bm{\alpha},\bm{\beta},\gamma)$-log-gamma polymer, the joint distribution of the point-to-point partition functions at time $2n$ is
\begin{align}\label{eq:P2PjointDistr}
\P(Z_{i,2n+1-i} \in \diff x_i ~~ \forall i=1,\dots,2n)
= \frac{1}{\fG_{\bm{\alpha},\bm{\beta},\gamma}}
\fPhi_{\bm{\alpha},\bm{\beta},\gamma}(\bm{x})
\prod_{i=1}^{2n} \frac{\diff x_i}{x_i}
\end{align}
for $\bm{x} \in\R_{+}^{2n}$.
The normalization constant $\fG_{\bm{\alpha},\bm{\beta},\gamma}$ and the function $\fPhi_{\bm{\alpha},\bm{\beta},\gamma}$ are given by
\begin{align}
\label{eq:P2Lnormalization}
\fG_{\bm{\alpha},\bm{\beta},\gamma}
:= &\prod_{1\leq i,j\leq n} \!\! \Gamma(\alpha_i + \beta_j + \gamma)
\prod_{1\leq i\leq j\leq n} \!\! \Gamma(\alpha_i + \alpha_j) \Gamma(\beta_i + \beta_j) \, , \\
\label{eq:P2LpartFnsDensity}
\fPhi_{\bm{\alpha},\bm{\beta},\gamma}(\bm{x})
:= &\int_{\fT_{2n}(\bm{x})}
\tau_0^{-\gamma}
\prod_{k=1}^n
\bigg(\frac{\tau_{2n-2k+1}^2}{\tau_{2n-2k+2}\tau_{2n-2k}}\bigg)^{-\alpha_k}
\bigg(\frac{\tau_{-2n+2k-1}^2}{\tau_{-2n+2k-2}\tau_{-2n+2k}}\bigg)^{-\beta_k}
e^{-\mathcal{E}(\bm{t})}
\prod_{i+j\leq 2n} \!\! \frac{\diff t_{i,j}}{t_{i,j}},\notag\\
\end{align}
where $\fT_{2n}(\bm{x})$ denotes the set of all triangular arrays $\bm{t} = \{t_{i,j}\colon i+j\leq 2n+1\} \in \R_{+}^{n(2n+1)}$ with positive entries such that $(t_{1,2n},t_{2,2n-1},\dots,t_{2n,1}) = \bm{x}$.
\end{lemma}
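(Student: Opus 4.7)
The strategy is to apply the geometric RSK map $\bm{T}=\grsk(\bm{W})$ to the full array of weights indexed by $\mathcal{I}=\{(i,j):i+j\leq 2n+1\}$, and then use Proposition~\ref{prop:gRSKproperties} to push forward the joint law of $\bm{W}$. The joint density of the independent inverse-gamma weights with respect to the multiplicatively Haar measure $\prod dw_{i,j}/w_{i,j}$ is
\[
\frac{1}{\fG_{\bm{\alpha},\bm{\beta},\gamma}}\prod_{(i,j)\in\mathcal{I}} W_{i,j}^{-\gamma_{i,j}}\exp\Big(-\sum_{(i,j)\in\mathcal{I}}\frac{1}{W_{i,j}}\Big),
\]
where $\gamma_{i,j}$ is given by the three-case rule in~\eqref{eq:logGammaDistribution}; checking the three blocks against~\eqref{eq:P2Lnormalization} shows that the product of the $\Gamma(\gamma_{i,j})$ normalizers is exactly $\fG_{\bm{\alpha},\bm{\beta},\gamma}$. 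Properties~(iii) and~(iv) of Proposition~\ref{prop:gRSKproperties} immediately convert the exponential into $e^{-\mathcal{E}(\bm{t})}$ and preserve the multiplicative Haar measure, so the pushforward law of $\bm{T}$ differs from the sought expression only in the polynomial prefactor.

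The main task, and the step requiring some care, is to re-express $\prod_{(i,j)\in\mathcal{I}} W_{i,j}^{-\gamma_{i,j}}$ in terms of the diagonal products $\tau_k$ of $\bm{T}$. I would do this by collecting the coefficient of each spectral parameter separately. Reading off~\eqref{eq:logGammaDistribution}, the coefficient of $\alpha_k$ in $-\sum_{(i,j)}\gamma_{i,j}\log W_{i,j}$ equals
\[
-\sum_{j=1}^{2n-k+1}\log W_{k,j}-\sum_{i=1}^{k}\log W_{i,2n-k+1},
\]
where the duplicated term $\log W_{k,2n-k+1}$ accounts for the doubling $\gamma_{k,2n-k+1}=2\alpha_k$ in the anti-diagonal block. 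Now the border indices $(k,2n-k+1)$, $(k-1,2n-k+1)$, $(k,2n-k)$ all lie on the diagonals $i+j\in\{2n,2n+1\}$ of $\mathcal{I}$, hence property~(ii) (equivalently, the rectangular identity~\eqref{eq:gRSKprodSubarray}) gives
\[
\prod_{j=1}^{2n-k+1}W_{k,j}=\frac{\tau_{2n-2k+1}}{\tau_{2n-2k+2}},\qquad
\prod_{i=1}^{k}W_{i,2n-k+1}=\frac{\tau_{2n-2k+1}}{\tau_{2n-2k}},
\]
and multiplying produces exactly the factor $\tau_{2n-2k+1}^2/(\tau_{2n-2k+2}\tau_{2n-2k})$ raised to the power $-\alpha_k$ in~\eqref{eq:P2LpartFnsDensity}. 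The $\beta_k$ contribution is handled identically after reflecting about the main diagonal (which sends $\tau_m\mapsto\tau_{-m}$), and the coefficient of $\gamma$ is simply $-\sum_{1\leq i,j\leq n}\log W_{i,j}=-\log\tau_0$, since $(n,n)$ is a border index with $\prod_{i,j\leq n}W_{i,j}=\tau_0$.

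Putting these identifications together, the pushed-forward density on $\bm{T}=\grsk(\bm{W})$ equals
\[
\frac{1}{\fG_{\bm{\alpha},\bm{\beta},\gamma}}\,\tau_0^{-\gamma}\prod_{k=1}^n\Big(\tfrac{\tau_{2n-2k+1}^2}{\tau_{2n-2k+2}\tau_{2n-2k}}\Big)^{-\alpha_k}\prod_{k=1}^n\Big(\tfrac{\tau_{-2n+2k-1}^2}{\tau_{-2n+2k-2}\tau_{-2n+2k}}\Big)^{-\beta_k}\,e^{-\mathcal{E}(\bm{t})}\prod_{(i,j)\in\mathcal{I}}\frac{dt_{i,j}}{t_{i,j}}.
\]
Property~(i) identifies the outer border entries $T_{i,2n+1-i}$ with the point-to-point partition functions $Z_{i,2n+1-i}$, so integrating out all the interior entries (those with $i+j\leq 2n$) yields the claimed joint density $\fPhi_{\bm{\alpha},\bm{\beta},\gamma}(\bm{x})/\fG_{\bm{\alpha},\bm{\beta},\gamma}$. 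The only place a reader needs to be patient is the combinatorial bookkeeping in the $\alpha_k$/$\beta_k$ coefficient extraction above; once one has the correct collection of border indices, each piece reduces to a single application of the rectangular product identity.
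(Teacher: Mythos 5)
Your proposal is correct and follows essentially the same route as the paper's proof: push the product law of the inverse-gamma weights through $\grsk$, use property (ii) of Proposition~\ref{prop:gRSKproperties} (together with~\eqref{eq:gRSKprodSubarray}) to collect the exponent of each of $\alpha_k$, $\beta_k$, $\gamma$ into the ratios $\tau_{2n-2k+1}^2/(\tau_{2n-2k+2}\tau_{2n-2k})$, their reflections, and $\tau_0$, then invoke properties (iii), (iv) and (i) and integrate out the interior entries. The bookkeeping of the doubled anti-diagonal term and the choice of border indices matches the paper's computation exactly.
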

\begin{proof}
Set $\mathcal{I}:=\{ (i,j)\in\N\times\N \colon i+j\leq 2n+1\}$. According to~\eqref{eq:logGammaDistribution}, the joint law of the weight triangular array $\bm{W} = \{ W_{i,j}\colon (i,j)\in\mathcal{I}\}$ for the $(\bm{\alpha},\bm{\beta},\gamma)$-log-gamma measure is given by
\begin{equation}
\label{eq:P2LlogGammaDensity}
\begin{split}
\P(\bm{W}\in \diff\bm{w})
= &\prod_{i,j = 1}^n \frac{w_{i,j}^{-\alpha_i-\beta_j-\gamma}}{\Gamma(\alpha_i + \beta_j+\gamma)}
\prod_{\substack{1\leq i\leq n \\ n < j \leq 2n-i+1}}
\!\!\!\!
\frac{w_{i,j}^{-\alpha_i - \alpha_{2n-j+1}}}{\Gamma(\alpha_i + \alpha_{2n-j+1})}
\prod_{\substack{1\leq j\leq n \\ n < i \leq 2n-j+1}}
\!\!\!\!
\frac{w_{i,j}^{-\beta_{2n-i+1} - \beta_j}}{\Gamma(\beta_{2n-i+1} + \beta_j)} \\
&\times \exp\bigg(-\sum_{(i,j)\in\mathcal{I}} \frac{1}{w_{i,j}}\bigg)
\prod_{(i,j)\in\mathcal{I}} \frac{\diff w_{i,j}}{w_{i,j}} \, .
\end{split}
\end{equation}
We now rewrite the above in terms of the image array $\bm{T}= \{ T_{i,j}\colon (i,j)\in\mathcal{I} \}$ of $\bm{W}$ under the $\grsk$ bijection. By formula~\eqref{eq:gRSKprodSubarray}, the product of $w_{i,j}$'s that are raised to power $-\gamma$ is $\prod_{i,j=1}^n w_{i,j} = \tau_0$. Property~\ref{prop:gRSKproperties_type} of Proposition~\ref{prop:gRSKproperties} yields
\[
\prod_{j=1}^{2n-k+1} \!\!\! w_{k,j} = \frac{\tau_{2n-2k+1}}{\tau_{2n-2k+2}} \, , \quad\qquad
\prod_{i=1}^{2n-k+1} \!\!\! w_{i,k} = \frac{\tau_{-2n+2k-1}}{\tau_{-2n+2k-2}}
\]
for $1\leq k\leq 2n$ with the convention that $\tau_{2n}=\tau_{-2n}=1$.
 The product of all $w_{i,j}$'s that are raised to power $-\alpha_k$ in~\eqref{eq:P2LlogGammaDensity} is written as
\[
 \prod_{j=1}^{2n-k+1} \!\!\! w_{k,j} \prod_{i=1}^k w_{i,2n-k+1}
= \frac{\tau_{2n-2k+1}}{\tau_{2n-2k+2}}
\frac{\tau_{2n-2k+1}}{\tau_{2n-2k}}
\, ,
\]
and similarly the product of all $w_{i,j}$'s that are raised to power $-\beta_k$ is written as
\[
 \prod_{i=1}^{2n-k+1} \!\!\! w_{i,k} \prod_{j=1}^k w_{2n-k+1,j}
= \frac{\tau_{-2n+2k-1}}{\tau_{-2n+2k-2}}
\frac{\tau_{-2n+2k-1}}{\tau_{-2n+2k}}
\, .
\]
Using property~\ref{prop:gRSKproperties_invWeights} of Proposition~\ref{prop:gRSKproperties} for dealing with the exponential term in~\eqref{eq:P2LlogGammaDensity}, and property~\ref{prop:gRSKproperties_Jacobian} regarding the volume preserving property of
 the differential form, we obtain:
\[
\begin{split}
\P(\bm{T}\in\diff\bm{t})
= \, &\prod_{1\leq i,j\leq n} \frac{1}{\Gamma(\alpha_i + \beta_j+\gamma)}
\prod_{1\leq i\leq j\leq n} \frac{1}{\Gamma(\alpha_i + \alpha_j) \Gamma(\beta_i + \beta_j)} \\
&\times \tau_0^{-\gamma}
\prod_{k=1}^n
\bigg(\frac{\tau_{2n-2k+1}^2}{\tau_{2n-2k+2}\tau_{2n-2k}}\bigg)^{-\alpha_k}
\bigg(\frac{\tau_{-2n+2k-1}^2}{\tau_{-2n+2k-2}\tau_{-2n+2k}}\bigg)^{-\beta_k}
e^{-\mathcal{E}(\bm{t})}
\prod_{(i,j)\in\mathcal{I}} \frac{\diff t_{i,j}}{t_{i,j}} \, .
\end{split}
\]
Finally, property~\ref{prop:gRSKproperties_partitionFn} of Proposition~\ref{prop:gRSKproperties}  allows one to write the joint law of
 $(Z_{1,2n},Z_{2,2n-1},\dots,Z_{2n,1})$  as in \eqref{eq:P2PjointDistr}
 by integrating the above over $\fT_{2n}(\bm{x})$.
\end{proof}

We can now derive the Whittaker integral formula for the Laplace transform of $\fZ_{2n}$.
\begin{theorem}
\label{thm:P2LLaplaceTransf}
The Laplace transform of the point-to-line partition function $\fZ_{2n}$ for the
$(\bm{\alpha},\bm{\beta},\gamma)$-log-gamma polymer can be written in terms of orthogonal Whittaker functions as:
\begin{equation}
\label{eq:P2LLaplaceTransf}
\E\big[e^{- r \fZ_{2n}}\big] 
= \frac{r^{\sum_{k=1}^{n} (\alpha_k+\beta_k +\gamma)}}
{\fG_{\bm{\alpha},\bm{\beta},\gamma}}
\int_{\R_{+}^n} 
\bigg(\prod_{i=1}^n x_i\bigg)^{\gamma}
e^{-r x_1}
\Psi_{\bm{\alpha}}^{\mathfrak{so}_{2n+1}}(\bm{x})
\Psi_{\bm{\beta}}^{\mathfrak{so}_{2n+1}}(\bm{x})
\prod_{i=1}^n \frac{\diff x_i}{x_i}
\end{equation}
for all $r>0$, where $\fG_{\bm{\alpha},\bm{\beta},\gamma}$ is defined by~\eqref{eq:P2Lnormalization}.
\end{theorem}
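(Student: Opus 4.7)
The plan is to start from the joint distribution of point-to-point partition functions in Lemma~\ref{lemma:P2PjointDistr} and use a single reciprocal rescaling of the $\grsk$ triangular array to match two $\mathfrak{so}_{2n+1}$-Whittaker integrals. Since $\fZ_{2n}=\sum_{i=1}^{2n} Z_{i,2n+1-i}$, \eqref{eq:P2PjointDistr} with the definition of $\fPhi_{\bm\alpha,\bm\beta,\gamma}$ unfolded gives
\[
\E[e^{-r\fZ_{2n}}]=\frac{1}{\fG_{\bm\alpha,\bm\beta,\gamma}}\int e^{-r\sum_i t_{i,2n+1-i}}\,\tau_0^{-\gamma}\!\prod_{k=1}^n\!\Big(\tfrac{\tau_{2n-2k+1}^2}{\tau_{2n-2k+2}\tau_{2n-2k}}\Big)^{\!-\alpha_k}\!\Big(\tfrac{\tau_{-2n+2k-1}^2}{\tau_{-2n+2k-2}\tau_{-2n+2k}}\Big)^{\!-\beta_k} e^{-\mathcal{E}(\bm t)}\!\!\prod_{i+j\leq 2n+1}\!\!\tfrac{\diff t_{i,j}}{t_{i,j}},
\]
the integration running over all positive triangular arrays $\bm t=\{t_{i,j}:i+j\leq 2n+1\}$.

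The key manipulation is the log-measure-preserving change of variables $t_{i,j}\mapsto(r\tilde t_{i,j})^{-1}$. Under this substitution the initial term $1/t_{1,1}$ of $\mathcal{E}(\bm t)$ becomes $r\tilde t_{1,1}$, producing the factor $e^{-rx_1}$ with the identification $x_i:=\tilde t_{i,i}$; the Laplace exponent $r\sum_i t_{i,2n+1-i}$ becomes $\sum_i 1/\tilde t_{i,2n+1-i}$; each edge ratio $(t_{i-1,j}+t_{i,j-1})/t_{i,j}$ is inverted into $\tilde t_{i,j}/\tilde t_{i-1,j}+\tilde t_{i,j}/\tilde t_{i,j-1}$ (the $r$'s cancelling); and any factor $\tau_k^{a}$ transforms into $r^{-a\,d_k}\tilde\tau_k^{-a}$, where $d_k$ is the number of entries on diagonal $k$. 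The identities $d_{2n-2k+1}=d_{2n-2k}=k$ and $d_{2n-2k+2}=k-1$ imply that the $k$-th $\alpha$-factor contributes a prefactor of $r^{\alpha_k(2k-(k-1)-k)}=r^{\alpha_k}$; combined with the analogous $\beta$-computation and with $\tau_0^{-\gamma}\mapsto r^{n\gamma}(\prod_i x_i)^\gamma$, this assembles the overall prefactor $r^{\sum_{k=1}^n(\alpha_k+\beta_k+\gamma)}$ and the $(\prod_i x_i)^\gamma$ weight that appear in \eqref{eq:P2LLaplaceTransf}.

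Finally, I split $\tilde{\bm t}$ into its upper part $\{\tilde t_{i,j}:j\geq i\}$ and lower part $\{\tilde t_{i,j}:i\geq j\}$, which share the main diagonal $\bm x=(\tilde t_{1,1},\dots,\tilde t_{n,n})$, and identify each part with a geometric symplectic Gelfand-Tsetlin pattern $\bm z^{(\alpha)}$ or $\bm z^{(\beta)}$ of depth $2n$ via the rule \emph{row $i$ of the half-triangle $\leftrightarrow$ diagonal $2n-i$ of the corresponding half of $\tilde{\bm t}$}. Under this rule the product of row $i$ of $\bm z^{(\alpha)}$ equals $\tilde\tau_{2n-i}$, so the transformed $\alpha$-factor is precisely $\type(\bm z^{(\alpha)})^{\bm\alpha^{\pm}}$, and analogously for $\beta$; each transformed edge ratio sits in a unique one of the two half-triangles and reproduces the interior contributions of $\mathcal{E}^{\llrighttriangle[0.15]}(\bm z^{(\alpha)})+\mathcal{E}^{\llrighttriangle[0.15]}(\bm z^{(\beta)})$; and the two halves of the bottom antidiagonal populate precisely the wall positions $z^{(\alpha)}_{2k-1,k}=\tilde t_{k,2n+1-k}$ and $z^{(\beta)}_{2k-1,k}=\tilde t_{2n+1-k,k}$, so $\sum_i 1/\tilde t_{i,2n+1-i}$ supplies exactly the wall terms of both Whittaker energies. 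Integrating out the interior variables of the two patterns recovers $\Psi^{\mathfrak{so}_{2n+1}}_{\bm\alpha}(\bm x)\Psi^{\mathfrak{so}_{2n+1}}_{\bm\beta}(\bm x)$ and yields \eqref{eq:P2LLaplaceTransf}. The main obstacle is the combinatorial bookkeeping in this last paragraph: checking that every edge of $\tilde{\bm t}$ (including those crossing the main diagonal) is unambiguously assigned to one of the two half-patterns, and that the odd-indexed diagonals on which the bottom antidiagonal lives correspond exactly to the wall positions of the odd-depth rows in a symplectic Gelfand-Tsetlin pattern.
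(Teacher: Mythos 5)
Your proposal is correct and follows essentially the same route as the paper's proof: unfold Lemma~\ref{lemma:P2PjointDistr}, apply the reciprocal change of variables $t_{i,j}\mapsto (r s_{i,j})^{-1}$ (the paper's~\eqref{change}), track the resulting powers of $r$ and the reversal of the energy's arrows, and split the rescaled array along the main diagonal into two half-triangular patterns identified with $\Psi^{\mathfrak{so}_{2n+1}}_{\bm\alpha}$ and $\Psi^{\mathfrak{so}_{2n+1}}_{\bm\beta}$ via Definition~\ref{def:soWhittakerFn}. Your diagonal-count bookkeeping for the $r$-prefactor and the explicit row-$i$ $\leftrightarrow$ diagonal-$(2n-i)$ matching (including the wall terms coming from the antidiagonal entries) are accurate and merely make explicit what the paper carries out by direct computation.
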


\begin{proof}
Given that $\fZ_{2n}=\sum_{i=1}^{2n} Z_{i,2n-i+1}$, we have via Lemma~\ref{lemma:P2PjointDistr} that
\[
\begin{split}
\E\big[e^{- r \fZ_{2n}}\big]
= \E\Big[ e^{-r \sum_{i = 1}^{2n} Z_{i,2n+1-i} } \Big] 
= \frac{1}{\fG_{\bm{\alpha},\bm{\beta},\gamma}}
\int_{\R^{2n}_+} 
\exp\bigg(-r \sum_{i = 1}^{2n} x_i\bigg) \,\,
\fPhi_{\bm{\alpha},\bm{\beta},\gamma}(\bm{x})
\prod_{i=1}^{2n} \frac{\diff x_i}{x_i} \, .
\end{split}
\]
Using definition~\eqref{eq:P2LpartFnsDensity} of $\fPhi_{\bm{\alpha},\bm{\beta}, \gamma}$
and expressing part of the integrand in the right hand side in 
terms of variables $t_{i,j}$'s of the corresponding triangular array (recall notations~\eqref{eq:tau}, \eqref{eq:energy}), we obtain
\[
\begin{split}
\E\big[e^{- r \fZ_{2n}}\big]
= \, &\frac{1}{\fG_{\bm{\alpha},\bm{\beta},\gamma}} 
 \int_{\R^{2n}_+}
\prod_{i=1}^{2n} \frac{\diff t_{i,2n-i+1}}{t_{i,2n-i+1}}
\exp\bigg(-r \sum_{i = 1}^{2n} t_{i,2n-i+1}\bigg) \\
&\times \int_{\R^{n(2n-1)}_+}
\tau_0^{-\gamma}
\prod_{k=1}^n
\bigg(\frac{\tau_{2n-2k+1}^2}{\tau_{2n-2k+2}\tau_{2n-2k}}\bigg)^{-\alpha_k}
\bigg(\frac{\tau_{-2n+2k-1}^2}{\tau_{-2n+2k-2}\tau_{-2n+2k}}\bigg)^{-\beta_k} \\
&\times \exp\bigg(-\frac{1}{t_{1,1}}
- \sum_{i>1,j} \frac{t_{i-1,j}}{t_{i,j}}
- \sum_{j>1,i} \frac{t_{i,j-1}}{t_{i,j}} \bigg)
\prod_{i+j\leq 2n} \frac{\diff t_{i,j}}{t_{i,j}}
\, ,
\end{split}
\]
where the implicit range of indices for $(i,j)$ is $i+j\leq 2n+1$.
We now change variables by setting 
\begin{align}\label{change}
t_{i,j} = (r s_{i,j})^{-1}, \qquad \text{for all}\quad  (i,j) .
\end{align}
Pictorially, this change of variables reverses the arrows in Figure \ref{subfig:triangularArray}. 
This can be seen if one recalls (see caption of Figure  \ref{fig:polygonalArray}) that
an ``arrow'' $t_{i,j}\to t_{i+1,j}$ or $t_{i,j}\to t_{i,j+1}$ in the figure represents a summand 
$t_{i,j}/t_{i+1,j}$ or $t_{i,j} / t_{i,j+1}$, respectively, in the functional  $\mathcal{E}(\bm{t})$. The change of 
variables \eqref{change} will transform these ratios to $t_{i+1,j}/t_{i,j}$ and $ t_{i,j+1} / t_{i,j} $, which by the
same convention can be represented in the diagrams as $t_{i+1,j} \to t_{i,j}$ and $ t_{i,j+1} \to t_{i,j} $.

Recalling, now, \eqref{eq:tau} we obtain
\[
\begin{split}
\bigg(\frac{\tau_{2n-2k+1}^2}{\tau_{2n-2k+2}\tau_{2n-2k}}\bigg)^{-\alpha_k}
&= \bigg(\frac{\prod_{j-i=2n-2k+1} t_{i,j}^2}
{\prod_{j-i=2n-2k+2} t_{i,j} \prod_{j-i=2n-2k} t_{i,j}} \bigg)^{-\alpha_k} \\
&= r^{\alpha_k} \bigg(\frac{\prod_{j-i=2n-2k+1} s_{i,j}^2}
{\prod_{j-i=2n-2k+2} s_{i,j} \prod_{j-i=2n-2k} s_{i,j}} \bigg)^{\alpha_k}
= r^{\alpha_k} \bigg(\frac{\sigma_{2n-2k+1}^2}{\sigma_{2n-2k+2} \sigma_{2n-2k}} \bigg)^{\alpha_k} \, ,
\end{split}
\]
where analogously to \eqref{eq:tau} we set $\sigma_k:=\prod_{j-i=k} s_{i,j}$.
Similarly, we have
\[
\bigg(\frac{\tau_{-2n+2k-1}^2}{\tau_{-2n+2k-2}\tau_{-2n+2k}}\bigg)^{-\beta_k}
= r^{\beta_k} \bigg(\frac{\sigma_{-2n+2k-1}^2}{\sigma_{-2n+2k-2} \sigma_{-2n+2k}} \bigg)^{\beta_k} \, 
\quad\text{and} \quad
\tau_0^{-\gamma}
= r^{n\gamma} \sigma_0^{\gamma} \, .
\]
Moreover, the volume is preserved under this change of variables, that is
\[
\prod_{i+j\leq 2n} \frac{\diff t_{i,j}}{t_{i,j}}
= \prod_{i+j\leq 2n} \frac{\diff s_{i,j}}{s_{i,j}} \, .
\]
We thus obtain
\[
\begin{split}
\E\big[e^{- r \fZ_{2n}}\big] 
= \, &\frac{r^{\sum_{k=1}^{n}(\alpha_k + \beta_k + \gamma)}  }{\fG_{\bm{\alpha},\bm{\beta},\gamma}}
\int_{\R^{2n}_+}
\prod_{i=1}^{2n} \frac{\diff s_{i,2n-i+1}}{s_{i,2n-i+1}} \,\,
\exp\Big(- \sum_{i=1}^n \frac{1}{s_{i,2n-i+1}}\Big) \\
&\times \int_{\R^{n(2n-1)}_+}
\sigma_0^{\gamma}
\prod_{k=1}^n
\bigg(\frac{\sigma_{2n-2k+1}^2}{\sigma_{2n-2k+2}\sigma_{2n-2k}}\bigg)^{\alpha_k}
\bigg(\frac{\sigma_{-2n+2k-1}^2}{\sigma_{-2n+2k-2}\sigma_{-2n+2k}}\bigg)^{\beta_k} \\
&\times 
\exp\bigg( -r s_{1,1}
- \sum_{i>1,j} \frac{s_{i,j}}{s_{i-1,j}}
- \sum_{j>1,i} \frac{s_{i,j}}{s_{i,j-1}} \bigg)
\prod_{i+j\leq 2n} \frac{\diff s_{i,j}}{s_{i,j}}
\, .
\end{split}
\]
We now change the order in which variables are integrated in the above expression: we first integrate over the two triangular arrays $\{s_{i,j}\}_{i<j}$ and $\{s_{i,j}\}_{j<i}$ into which the whole triangular shape (see Figure~\ref{subfig:triangularArray}) is divided by the main diagonal
$\{(i,i)\colon i\geq 1\}$; next, we integrate over the diagonal variables $s_{1,1},\dots,s_{n,n}$. This way, we obtain:
\[
\begin{split}
&\E\Big[e^{- r \fZ_{2n}}\Big]
= \frac{r^{\sum_{k=1}^n(\alpha_k+\beta_k+\gamma)}}{\fG_{\bm{\alpha},\bm{\beta},\gamma}}
\int_{\R^n_+}
\prod_{i=1}^n \frac{\diff s_{i,i}}{s_{i,i}}
\bigg(\prod_{i=1}^n s_{i,i}\bigg)^{\gamma}
e^{-r s_{1,1} } \\
&\times  \!\! \int_{\R^{n^2}_+}
\prod_{k=1}^n
\bigg(\frac{\sigma_{2n-2k+1}^2}{\sigma_{2n-2k+2}\sigma_{2n-2k}}\bigg)^{\alpha_k}
\exp\bigg(
- \sum_{i=1}^n \frac{1}{s_{i,2n+1-i}}
- \sum_{1<i\leq j} \frac{s_{i,j}}{s_{i-1,j}}
- \sum_{i<j} \frac{s_{i,j}}{s_{i,j-1}} \bigg)
\prod_{i<j} \frac{\diff s_{i,j}}{s_{i,j}} \\
&\times \!\! \int_{\R^{n^2}_+}
\prod_{k=1}^n
\bigg(\frac{\sigma_{-2n+2k-1}^2}{\sigma_{-2n+2k-2}\sigma_{-2n+2k}}\bigg)^{\beta_k}
\exp\bigg(
- \sum_{j=1}^n \frac{1}{s_{2n+1-j,j}}
- \sum_{j<i} \frac{s_{i,j}}{s_{i-1,j}}
- \sum_{1<j\leq i} \frac{s_{i,j}}{s_{i,j-1}} \bigg)
\prod_{j<i} \frac{\diff s_{i,j}}{s_{i,j}} \, .
\end{split}
\]
Comparing with Definition~\ref{def:soWhittakerFn}, we identify the second and the third integral in the above formula as $\mathfrak{so}_{2n+1}$-Whittaker functions, both with shape variables $s_{1,1},\dots,s_{n,n}$ and parameters $\bm{\alpha}$ and $\bm{\beta}$ respectively. This concludes the proof of~\eqref{eq:P2LLaplaceTransf}.
\end{proof}

\subsection{Point-to-half-line polymer}
\label{subsec:P2HLpartFn}
The only weights involved in the point-to-half-line partition function are $\{ W_{i,j} \colon i\leq n, \, i+j\leq 2n+1\}$, see Figure
\ref{subfig:P2half-Lpath}. When restricted to such a trapezoidal array, the $(\bm{\alpha},\bm{\beta},\gamma)$-log-gamma measure ~\eqref{eq:logGammaDistribution} coincides with the law of a $(\bm{\alpha},\bm{\beta}+\gamma,0)$-log-gamma measure.
 For this reason, in the following, we will assume without loss of generality that $\gamma=0$ and we will
  refer to the corresponding model as the half-flat $(\bm{\alpha},\bm{\beta})$-log-gamma polymer.
We first give an expression for the joint law of the point-to-point partition functions on a ``half-line''. Again, the proof will be based on Theorem~\ref{prop:gRSKproperties}.

\begin{lemma}
\label{lemma:P2PhalfJointDistr}
For the half-flat $(\bm{\alpha},\bm{\beta})$-log-gamma polymer, the joint distribution of the point-to-point partition functions on the half-line $\{i+j=2n+1,\,~ i\leq j\}$ is 
\[
\P(\hZ_{i,2n+1-i} \in \diff x_i ~~ \forall i=1,\dots,n)
=
\frac{1}{\hG_{\bm{\alpha},\bm{\beta}}}
\hPhi_{\bm{\alpha},\bm{\beta}}(\bm{x})
\prod_{i=1}^n \frac{\diff x_i}{x_i}
\]
for $\bm{x}\in\R_{+}^n$. The normalization constant $\hG_{\bm{\alpha},\bm{\beta}}$ and the function $\hPhi_{\bm{\alpha},\bm{\beta}}$ are given by
\begin{align}
\label{eq:P2HLnormalization}
\hG_{\bm{\alpha},\bm{\beta}}
&:= \prod_{1\leq i,j\leq n} \!\! \Gamma(\alpha_i + \beta_j)
\prod_{1\leq i\leq j\leq n} \!\! \Gamma(\alpha_i + \alpha_j) \, , \\
\label{eq:P2HLpartFnsDensity}
\hPhi_{\bm{\alpha},\bm{\beta}}(\bm{x})
&:= \int_{\hT_{\,2n}(\bm{x})}
\prod_{k=1}^n
\bigg(\frac{\tau_{2n-2k+1}^2}{\tau_{2n-2k+2}\tau_{2n-2k}}\bigg)^{-\alpha_k}
\Big( \frac{\tau_{-n+k}}{\tau_{-n+k-1}} \Big)^{-\beta_{k}}
e^{-\mathcal{E}(\bm{t})}
\prod_{\substack{i\leq n \\ i+j\leq 2n}} \frac{\diff t_{i,j}}{t_{i,j}}
\, ,
\end{align}
where $\hT_{\,2n}(\bm{x})$ denotes the set of all trapezoidal arrays $\bm{t} = \{ t_{i,j} \colon i\leq n, \, i+j\leq 2n+1\} \in \R_{+}^{(3n^2+n)/2}$ with positive entries such that $(t_{1,2n},t_{2,2n-1},\dots,t_{n,n+1}) = \bm{x}$.
\end{lemma}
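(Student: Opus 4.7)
The plan is to mirror the proof of Lemma~\ref{lemma:P2PjointDistr}, but adapted to the trapezoidal index set
\[
\mathcal{I} := \{(i,j)\in\N^2 \colon i\leq n,\, i+j\leq 2n+1\}
\]
that supports the weights contributing to $\hZ_{2n}$. Because the point-to-half-line partition functions depend only on $\{W_{i,j}\colon (i,j)\in\mathcal{I}\}$, and on $\mathcal{I}$ the $(\bm{\alpha},\bm{\beta},\gamma)$-log-gamma measure is the same as the $(\bm{\alpha},\bm{\beta}+\gamma,0)$-log-gamma measure, we may set $\gamma=0$ from the start. I would then write down the joint density of $\bm{W}=\{W_{i,j}\colon (i,j)\in\mathcal{I}\}$, which, by~\eqref{eq:logGammaDistribution}, reads
\[
\P(\bm{W}\in\diff\bm{w})
= \!\!\prod_{1\leq i,j\leq n}\!\!\frac{w_{i,j}^{-\alpha_i-\beta_j}}{\Gamma(\alpha_i+\beta_j)}
\!\!\prod_{\substack{1\leq i\leq n\\ n<j\leq 2n-i+1}}\!\!
\frac{w_{i,j}^{-\alpha_i-\alpha_{2n-j+1}}}{\Gamma(\alpha_i+\alpha_{2n-j+1})}
\exp\!\Big(-\sum_{(i,j)\in\mathcal{I}}\frac{1}{w_{i,j}}\Big)
\!\!\prod_{(i,j)\in\mathcal{I}}\!\frac{\diff w_{i,j}}{w_{i,j}}\,.
\]

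Next I would push this density through $\grsk$, writing $\bm{T}:=\grsk(\bm{W})\in\mathcal{A}_{\mathcal{I}}$ and using Proposition~\ref{prop:gRSKproperties} to rewrite each factor in terms of $\bm{t}$. Grouping the $\alpha_k$ exponent, the weights involved are exactly $\{w_{k,j}\colon 1\leq j\leq 2n-k+1\}\cup\{w_{i,2n-k+1}\colon 1\leq i\leq k\}$ (with $w_{k,2n-k+1}$ counted twice, matching the $\Gamma(2\alpha_k)$ from the $i=k,\,j=2n-k+1$ corner), while for the $\beta_k$ exponent only the column $\{w_{i,k}\colon 1\leq i\leq n\}$ contributes, since the trapezoidal geometry kills the symmetric column at the bottom. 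A quick check shows that $(k-1,2n-k+1)$, $(k,2n-k)$ and $(n,k-1)$ are all border indices of $\mathcal{I}$: the points immediately to the right, below, or down-right of them lie outside the trapezoid. Hence property~\ref{prop:gRSKproperties_type} yields
\[
\prod_{j=1}^{2n-k+1}\!\!w_{k,j}\prod_{i=1}^{k}w_{i,2n-k+1}
=\frac{\tau_{2n-2k+1}^2}{\tau_{2n-2k+2}\,\tau_{2n-2k}}\,,\qquad
\prod_{i=1}^{n}w_{i,k}=\frac{\tau_{-n+k}}{\tau_{-n+k-1}}\,,
\]
which reproduces the two products appearing in~\eqref{eq:P2HLpartFnsDensity}. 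Property~\ref{prop:gRSKproperties_invWeights} turns the exponential sum into $\exp(-\mathcal{E}(\bm{t}))$, while property~\ref{prop:gRSKproperties_Jacobian} gives $\prod_{(i,j)}\diff w_{i,j}/w_{i,j}=\prod_{(i,j)}\diff t_{i,j}/t_{i,j}$.

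To finish, I would use property~\ref{prop:gRSKproperties_partitionFn} to identify the border entries $(t_{1,2n},\dots,t_{n,n+1})$ with the partition functions $(\hZ_{1,2n},\dots,\hZ_{n,n+1})$, and then integrate out the $(3n^2+n)/2-n$ interior variables $\{t_{i,j}\colon i\leq n,\, i+j\leq 2n\}$ over $\R_+^{\,\cdot}$ to obtain the claimed marginal density and normalization. Rewriting the denominator products via the substitution $k=2n-j+1$ in the $\alpha$-product immediately gives
\[
\prod_{1\leq i,j\leq n}\!\Gamma(\alpha_i+\beta_j)\!\!\prod_{\substack{1\leq i\leq n\\ n<j\leq 2n-i+1}}\!\!\Gamma(\alpha_i+\alpha_{2n-j+1})
= \prod_{1\leq i,j\leq n}\!\Gamma(\alpha_i+\beta_j)\prod_{1\leq i\leq j\leq n}\!\Gamma(\alpha_i+\alpha_j),
\]
matching~\eqref{eq:P2HLnormalization}.

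The main technical point, and the only real difference from Lemma~\ref{lemma:P2PjointDistr}, is the careful bookkeeping of which indices remain border indices once the bottom half of the triangle has been removed: in particular, one must verify that property~\ref{prop:gRSKproperties_type} still applies to the $\beta_k$-column product $\prod_{i=1}^{n}w_{i,k}$ in this truncated geometry, because $(n,k-1)$ is a border index of the trapezoid even though it would be interior to the full triangle. Everything else is a direct transcription of the $\grsk$ manipulations carried out in the point-to-line case.
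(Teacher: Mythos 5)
Your proposal is correct and follows essentially the same route as the paper: write the joint density on the trapezoidal index set (with $\gamma$ absorbed into $\bm{\beta}$), push it through $\grsk$ using properties (ii)--(iv) of Proposition~\ref{prop:gRSKproperties} to produce the $\tau$-ratios, the energy $\mathcal{E}(\bm{t})$ and the unit Jacobian, and then integrate out the interior entries after identifying the border entries with the partition functions via property (i). Your border-index bookkeeping (in particular that $(n,k-1)$ is a border index of the trapezoid, which justifies applying property~\ref{prop:gRSKproperties_type} to the column product $\prod_{i=1}^n w_{i,k}$) and the reindexing $k=2n-j+1$ for the normalization constant both match the paper's computation.
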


\begin{proof}
Setting $\mathcal{I}:=\{ (i,j)\in\N\times\N \colon i\leq n , \, i+j\leq 2n+1\}$, the joint law of the weight trapezoidal array $\bm{W} = \{ W_{i,j} \colon (i,j)\in\mathcal{I}\}$ for the half-flat $(\bm{\alpha},\bm{\beta})$-log-gamma polymer is given by
\begin{equation}
\begin{split}
\label{eq:trapezoidLogGammaWeights}
\P(\bm{W}\in \diff\bm{w})
= &\prod_{1\leq i,j\leq n} \frac{w_{i,j}^{-\alpha_i-\beta_j}}{ \Gamma(\alpha_i + \beta_j)}
\prod_{\substack{1\leq i\leq n \\ n < j \leq 2n-i+1}}
\!\!\!\!
\frac{w_{i,j}^{-\alpha_i - \alpha_{2n-j+1}}}{\Gamma(\alpha_i + \alpha_{2n-j+1})} \\
&\times \exp\bigg(-\sum_{(i,j)\in\mathcal{I}} \frac{1}{w_{i,j}}\bigg)
\prod_{(i,j)\in\mathcal{I}} \frac{\diff w_{i,j}}{w_{i,j}} \, .
\end{split}
\end{equation}
We now rewrite the above in terms of the image array $\bm{T}= \{ T_{i,j} \colon (i,j)\in\mathcal{I} \}$ of $\bm{W}$ under the $\grsk$ bijection. The powers of $w_{i,j}$'s are sorted out by noting that
\[
\prod_{j=1}^{2n-k+1} w_{k,j} = \frac{\tau_{2n-2k+1}}{\tau_{2n-2k+2}} \, , \quad\qquad
\prod_{i=1}^{n} w_{i,k} = \frac{\tau_{-n+k}}{\tau_{-n+k-1}} \, , \quad\qquad
\prod_{i=1}^{k} w_{i,2n-k+1} = \frac{\tau_{2n-2k+1}}{\tau_{2n-2k}}
\]
for $1\leq k\leq n$, thanks to property~\ref{prop:gRSKproperties_type} of Proposition~\ref{prop:gRSKproperties}. Using property~\ref{prop:gRSKproperties_invWeights} for dealing with the exponential in~\eqref{eq:trapezoidLogGammaWeights}, and property~\ref{prop:gRSKproperties_Jacobian} for the differential form, we obtain:
\[
\begin{split}
\P(\bm{T}\in\diff\bm{t})
= \, &\prod_{1\leq i,j\leq n} \frac{1}{ \Gamma(\alpha_i + \beta_j)}
\prod_{1\leq i\leq j\leq n} \frac{1}{ \Gamma(\alpha_i + \alpha_j)} \\
&\times \prod_{k=1}^n
\bigg(\frac{\tau_{2n-2k+1}^2}{\tau_{2n-2k+2}\tau_{2n-2k}}\bigg)^{-\alpha_k}
\Big( \frac{\tau_{-n+k}}{\tau_{-n+k-1}} \Big)^{-\beta_{k}}
e^{-\mathcal{E}(\bm{t})}
\prod_{(i,j)\in\mathcal{I}} \frac{\diff t_{i,j}}{t_{i,j}} \, .
\end{split}
\]
Finally, property~\ref{prop:gRSKproperties_partitionFn} allows writing the joint law of $(Z_{1,2n},Z_{2,2n-1},\dots,Z_{n,n+1})$ by integrating the above over $\hT_{2n}(\bm{x})$.
\end{proof}

\begin{theorem}
\label{thm:P2HLLaplaceTransf}
The Laplace transform of the point-to-half-line partition function $\hZ_{2n}$ for the half-flat
$(\bm{\alpha},\bm{\beta})$-log-gamma polymer can be written in terms of Whittaker functions as:
\begin{equation}
\label{eq:P2HLLaplaceTransf}
\E\Big[e^{- r \hZ_{\,2n}}\Big] 
= \frac{r^{\sum_{k=1}^{n} (\alpha_k+\beta_k)}}
{\hG_{\bm{\alpha},\bm{\beta}}}
\int_{\R_{+}^n} e^{-r x_1}
\Psi_{\bm{\alpha}}^{\mathfrak{so}_{2n+1}}(\bm{x})
\Psi_{\bm{\beta}}^{\mathfrak{gl}_n}(\bm{x})
\prod_{i=1}^n \frac{\diff x_i}{x_i}
\end{equation}
for all $r>0$, where $\hG_{\bm{\alpha},\bm{\beta}}$ is given by~\eqref{eq:P2HLnormalization}.
\end{theorem}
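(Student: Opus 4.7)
The plan is to mirror the proof of Theorem~\ref{thm:P2LLaplaceTransf}. Using Lemma~\ref{lemma:P2PhalfJointDistr} and the decomposition $\hZ_{2n}=\sum_{i=1}^n \hZ_{i,2n+1-i}$, one writes
\[
\E\big[e^{-r\hZ_{2n}}\big]=\frac{1}{\hG_{\bm{\alpha},\bm{\beta}}}\int_{\R_+^n} \exp\Big(-r\sum_{i=1}^n x_i\Big)\hPhi_{\bm{\alpha},\bm{\beta}}(\bm{x})\prod_{i=1}^n\frac{\diff x_i}{x_i},
\]
and then expands using~\eqref{eq:P2HLpartFnsDensity} to obtain a multi-dimensional integral over the trapezoidal array $\bm{t}=\{t_{i,j}\colon i\leq n,\,i+j\leq 2n+1\}$ with integrand carrying the $\tau$-ratios and $e^{-\mathcal{E}(\bm{t})}$.

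Next, one performs the same change of variables $t_{i,j}=(rs_{i,j})^{-1}$ as in the proof of Theorem~\ref{thm:P2LLaplaceTransf}. This is volume-preserving on the logarithmic scale and effectively reverses all arrows in the diagram of $\mathcal{E}(\bm{t})$, so that $1/t_{1,1}$, $t_{i-1,j}/t_{i,j}$ and $t_{i,j-1}/t_{i,j}$ become $rs_{1,1}$, $s_{i,j}/s_{i-1,j}$ and $s_{i,j}/s_{i,j-1}$, while each $-rx_i$ becomes $-1/s_{i,2n-i+1}$. Setting $\sigma_k:=\prod_{j-i=k}s_{i,j}$ and counting the lattice points on each diagonal of the trapezoid, the prefactor becomes $r^{\sum_{k=1}^n(\alpha_k+\beta_k)}\prod_{k=1}^n(\sigma_{2n-2k+1}^2/(\sigma_{2n-2k+2}\sigma_{2n-2k}))^{\alpha_k}(\sigma_{-n+k}/\sigma_{-n+k-1})^{\beta_k}$.

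One now splits the trapezoidal region into the strict upper half $\{(i,j)\colon 1\leq i<j,\,i+j\leq 2n+1\}$ and the lower triangle (including the main diagonal) $\{(i,j)\colon 1\leq j\leq i\leq n\}$, and rearranges the integration so that the off-diagonal entries of each piece are integrated first, keeping the shared diagonal variables $(s_{1,1},\dots,s_{n,n})$ fixed. The upper-half integral has exactly the same shape as its counterpart in the proof of Theorem~\ref{thm:P2LLaplaceTransf}: combined with its $\alpha$-factors and the endpoint exponentials $\exp(-1/s_{i,2n-i+1})$, it equals $\Psi^{\son_{2n+1}}_{\bm{\alpha}}(s_{1,1},\dots,s_{n,n})$ by Definition~\ref{def:soWhittakerFn}. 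For the lower triangle, introduce the rotation $\widetilde{z}_{k,b}:=s_{n-k+b,b}$ for $1\leq b\leq k\leq n$; this bijects the lower triangle onto a $\gln_n$ triangular array of depth $n$ whose bottom row $(\widetilde{z}_{n,1},\dots,\widetilde{z}_{n,n})$ equals $(s_{1,1},\dots,s_{n,n})$. Under this bijection the $\beta$-factors $(\sigma_{-n+k}/\sigma_{-n+k-1})^{\beta_k}$ coincide with the type $\type(\widetilde{\bm{z}})^{\bm{\beta}}$, and the portion of $\mathcal{E}(\bm{t})$ coming from the lower triangle matches $\mathcal{E}^{\triangle}(\widetilde{\bm{z}})$, so by Definition~\ref{def:glWhittakerFn} the lower-triangle integral yields $\Psi^{\gln_n}_{\bm{\beta}}(s_{1,1},\dots,s_{n,n})$. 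The remaining term $-rs_{1,1}$ produces the prefactor $e^{-rx_1}$ after relabeling $x_i:=s_{i,i}$.

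The main obstacle is the combinatorial verification in the last step: checking that the rotation $\widetilde{z}_{k,b}=s_{n-k+b,b}$, which sends the main diagonal of the original lower triangle to the bottom row of the Whittaker pattern, simultaneously matches the array shape, the $\bm{\beta}$-type, and the energy $\mathcal{E}^{\triangle}$ of the $\gln_n$-Whittaker integrand. Once this correspondence is established, assembling all the factors yields~\eqref{eq:P2HLLaplaceTransf}.
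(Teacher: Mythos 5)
Your proposal is correct and follows essentially the same route as the paper: decompose $\hZ_{2n}$ into the point-to-point partition functions on the half-line, apply Lemma~\ref{lemma:P2PhalfJointDistr}, change variables $t_{i,j}=(rs_{i,j})^{-1}$, and split the trapezoid along the main diagonal to recognize an $\mathfrak{so}_{2n+1}$- and a $\mathfrak{gl}_n$-Whittaker function sharing the diagonal as shape variables. The only difference is that you make explicit (via the rotation $\widetilde{z}_{k,b}=s_{n-k+b,b}$) the identification of the lower triangle with a $\mathfrak{gl}_n$ pattern, which the paper leaves as a direct comparison with Definition~\ref{def:glWhittakerFn}; your index bookkeeping there checks out.
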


\begin{proof}
Given that $\hZ_{2n}=\sum_{i=1}^{n} Z_{i,2n-i+1}$, we have via Lemma~\ref{lemma:P2PhalfJointDistr} that
\[
\begin{split}
\E\Big[e^{- r \hZ_{\,2n}}\Big]
= \E\Big[ e^{-r \sum_{i = 1}^{n} Z_{i,2n+1-i} } \Big]
= \frac{1}{\hG_{\bm{\alpha},\bm{\beta}}}
\int_{\R^n_+} 
\exp\bigg(-r \sum_{i = 1}^n x_i\bigg) \,\,
\hPhi_{\bm{\alpha},\bm{\beta}}(\bm{x})
\prod_{i=1}^n \frac{\diff x_i}{x_i} \, .
\end{split}
\]
Using definition~\eqref{eq:P2HLpartFnsDensity} of $\hPhi_{\bm{\alpha},\bm{\beta}}$ and performing the same change of variables 
\eqref{change} as in the proof of Theorem~\ref{thm:P2LLaplaceTransf}, we obtain
\[
\begin{split}
\E\Big[e^{- r \hZ_{\,2n}}\Big] 
= \, &\frac{r^{\sum_{k=1}^n(\alpha_k+\beta_k)}}{\hG_{\bm{\alpha},\bm{\beta}}}
\int_{\R^{n}_+}
\prod_{i=1}^n \frac{\diff s_{i,2n+1-i}}{s_{i,2n+1-i}} \,\,
\exp\Big(- \sum_{i = 1}^n \frac{1}{s_{i,2n+1-i}}\Big) \\
&\times \int_{\R^{(3n^2-n)/2}_+}
\prod_{k=1}^{n}
\bigg(\frac{\sigma_{2n-2k+1}^2}{\sigma_{2n-2k+2}\sigma_{2n-2k}}\bigg)^{\alpha_k}
\Big( \frac{\sigma_{-n+k} }{\sigma_{-n+k-1}}   \Big)^{\beta_k} \\
&\times 
\exp\bigg( -r s_{1,1}
- \sum_{i>1,j} \frac{s_{i,j}}{s_{i-1,j}}
- \sum_{j>1,i} \frac{s_{i,j}}{s_{i,j-1}} \bigg)
\prod_{\substack{i\leq n \\ i+j\leq 2n}} \frac{\diff s_{i,j}}{s_{i,j}}
\, ,
\end{split}
\]
where the implicit range of indices for $(i,j)$ is $i\leq n$, $i+j\leq 2n+1$, and we set $\sigma_k := \prod_{j-i=k} s_{i,j}$.
We now change the order in which variables are integrated in the above expression: we first integrate over the two triangular arrays $\{s_{i,j}\}_{i<j}$ and $\{s_{i,j}\}_{j<i}$ into which the whole trapezoidal shape (see Figure~\ref{subfig:P2half-Lpath}) is divided by the main diagonal
$\{(i,i)\colon i\geq 1\}$; next, we integrate over the diagonal variables $s_{1,1},\dots,s_{n,n}$. In this way we obtain:
\[
\begin{split}
&\E\Big[e^{- r \hZ_{\,2n}}\Big]
= \frac{r^{\sum_{k=1}^n(\alpha_k+\beta_k)}}{\hG_{\bm{\alpha},\bm{\beta}}}
\int_{\R^n_+}
\prod_{i=1}^n \frac{\diff s_{i,i}}{s_{i,i}}
e^{-r s_{1,1} } \\
&\times \int_{\R^{n^2}_+}
\prod_{k=1}^n
\bigg(\frac{\sigma_{2n-2k+1}^2}{\sigma_{2n-2k+2}\sigma_{2n-2k}}\bigg)^{\alpha_k}
\exp\bigg(
- \sum_{i=1}^n \frac{1}{s_{i,2n+1-i}}
- \sum_{1<i\leq j} \frac{s_{i,j}}{s_{i-1,j}}
- \sum_{i<j} \frac{s_{i,j}}{s_{i,j-1}} \bigg)
\prod_{i<j} \frac{\diff s_{i,j}}{s_{i,j}} \\
&\times \int_{\R^{n(n-1)/2}_+}
\prod_{k=1}^n
\Big(\frac{\sigma_{-n+k} }{\sigma_{-n+k-1} }\Big)^{\beta_k}
 \exp\bigg(
- \sum_{j<i} \frac{s_{i,j}}{s_{i-1,j}}
- \sum_{1<j\leq i} \frac{s_{i,j}}{s_{i,j-1}} \bigg)
\prod_{j<i} \frac{\diff s_{i,j}}{s_{i,j}} \, .
\end{split}
\]
Comparing with Definition~\ref{def:soWhittakerFn} and~\ref{def:glWhittakerFn}, we identify the second integral as an $\mathfrak{so}_{2n+1}$-Whittaker function with parameters $\bm{\alpha}$, and the third integral as a $\mathfrak{gl}_n$-Whittaker function with parameters $\bm{\beta}$, both with shape variables $s_{1,1},\dots,s_{n,n}$. This concludes the proof of~\eqref{eq:P2HLLaplaceTransf}.
\end{proof}

\begin{remark}{\rm
Taking the limit $r\to 0$ in~\eqref{eq:P2HLLaplaceTransf} and since $\E\big[e^{-r \hZ_{\,2n}} \big] \to 1$ and $r^{\sum_{k=1}^{n} (\alpha_k+\beta_k)} \to 0$, we observe that the integral
\[
\int_{\R_{+}^n} 
\Psi_{\bm{\alpha}}^{\mathfrak{so}_{2n+1}}(\bm{x})
\Psi_{\bm{\beta}}^{\mathfrak{gl}_{n}}(\bm{x})
\prod_{i=1}^n \frac{\diff x_i}{x_i}
\]
diverges for all $\bm{\alpha},\bm{\beta}\in\R_{+}^n$. This does not contradict Ishii-Stade identity, as in~\eqref{eq:IshiiStade} the parameters of the $\mathfrak{gl}_n$-Whittaker function are required to have negative real part.}
\end{remark}

\subsection{Restricted and symmetric point-to-line polymers}
\label{sec:restricted}

We now study the point-to-line polymer restricted to stay in a half plane, i.e.\ not allowed to go below the main diagonal. We 
approach this by noting that the {\it restricted} polymer is closely connected to a symmetric polymer, i.e.\ a polymer whose weight array $\bm{W}=\{W_{i,j}, \, i+j\leq 2n+1\}$ satisfies $W_{i,j}=W_{j,i}$ for all $i,j$. Indeed, for each time a given restricted polymer path touches the main diagonal $i=j$ (including the starting point $(1,1)$), the symmetric polymer partition function counts the weight of that path twice. Therefore, the partition function of a symmetric polymer is given by:
\[
Z^{\mathrm{sym}}_{2n} = \sum_{\pi\in\rPi_{2n}} 2^{\# \{i\colon (i,i)\in\pi\} } \prod_{(i,j)\in \pi} W_{i,j}
= \sum_{\pi\in\rPi_{2n}} \prod_{(i,j)\in \pi} (1+\delta_{i,j})W_{i,j}
\, ,
\]
where $\rPi_{2n}$ is the set of all paths $\pi$ of length $2n$ starting from $(1,1)$ such that $i\leq j$ for all $(i,j)\in\pi$ (``restricted'' paths). It follows that the partition functions of the symmetric and restricted polymers are equal in distribution when the weights of the restricted polymer are doubled on the diagonal. To see what this practically means in the log-gamma case, we refer the reader to Definition~\ref{def:restrLogGammaDistribution} and Remark~\ref{rem:symm-restrPolymer}. Without loss of generality, we may then restrict ourselves to study the symmetric polymer.

We first give the natural definitions of transposition and symmetry in this setting. We define the \emph{transpose} of an index set $\mathcal{I}$ as the index set $\mathcal{I}^\top := \{(i,j)\in\N\times\N \colon (j,i)\in\mathcal{I}\}$; similarly, we define the \emph{transpose} $\bm{t}^\top$ of a polygonal array $\bm{t}$ by setting $t^\top_{i,j} := t_{j,i}$ for all $(i,j)\in\mathcal{I}^\top$.
An index set $\mathcal{I}$ will be called \emph{symmetric} if $\mathcal{I}=\mathcal{I}^\top$, and a polygonal array $\bm{t}$ indexed by a symmetric $\mathcal{I}$ will be called \emph{symmetric} if $\bm{t}=\bm{t}^\top$.

Properties $(i),(ii),(iii)$ in Proposition \ref{prop:gRSKproperties}, for the $\grsk$ 
with respect to input arrays without any symmetry constraint,
transfer directly to the case of symmetric arrays. The volume preserving property is also satisfied: 
\begin{proposition}
\label{prop:symmetricgRSK}
Let $\bm{w}\in\arrays$ 
%with $\mathcal{I}=\{(i,j)\colon i\leq j \,\text{and}\,\, i+j\leq n+1\}$ with $n$ being arbitrary integer,
 and $\bm{t} := \grsk(\bm{w})$. Then $\grsk\big(\bm{w}^\top\big) = \bm{t}^\top$. In particular, if $\bm{w}$ is symmetric, so is $\bm{t}$. 
 Moreover, in the symmetric case, the transformation
\begin{align}\label{eq:symmetricgRSK}
(\log w_{i,j} \colon (i,j)\in\mathcal{I}, \, i\leq j)
\mapsto
(\log t_{i,j} \colon (i,j)\in\mathcal{I}, \, i\leq j)
\end{align}
has Jacobian equal to $\pm 1$.
\end{proposition}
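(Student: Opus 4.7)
The plan is to prove the two assertions in sequence, both by tracking how the local moves $a_{i,j}, b_{i,j}$ interact with transposition.

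For the equivariance $\grsk(\bm{w}^\top)=\bm{t}^\top$, I would proceed by induction on $|\mathcal{I}|$, mirroring the recursive construction~\eqref{rsk_constr}. The base case is trivial. For the inductive step, the argument rests on two elementary observations. First, transposition commutes with the combinatorial data appearing in~\eqref{rsk_constr}: $(\mathcal{I}^\top)_{\rm out}=(\mathcal{I}_{\rm out})^\top$, $(\mathcal{I}^\top)^\circ=(\mathcal{I}^\circ)^\top$, and concatenation is equivariant. Second, the elementary moves satisfy $(a_{i,j}(\bm{w}))^\top=a_{j,i}(\bm{w}^\top)$ and $(b_{i,j}(\bm{w}))^\top=b_{j,i}(\bm{w}^\top)$; this is visible from the formulas, since each move treats the unordered pair $\{(i-1,j),(i,j-1)\}$ (and, for $b$, also $\{(i+1,j),(i,j+1)\}$) symmetrically, and these pairs transpose to the analogous pairs centred at $(j,i)$. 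Combining the two facts gives $(\rho_{i,j}(\bm{w}))^\top=\rho_{j,i}(\bm{w}^\top)$, and feeding this into~\eqref{rsk_constr} applied to $\bm{w}^\top$ closes the induction. The symmetry-preservation statement follows immediately: if $\bm{w}=\bm{w}^\top$, then $\bm{t}=\grsk(\bm{w})=\grsk(\bm{w}^\top)=\bm{t}^\top$.

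For the Jacobian claim, my strategy is to regroup the sequence of elementary moves defining $\grsk$ into ``symmetric blocks,'' each with restricted Jacobian $\pm 1$. Since $\rho_{i,j}$ and $\rho_{j,i}$ commute whenever $i\neq j$ (their diagonal shift is $2(j-i)\neq \pm 1$), we may pair them into composed moves $a_{i,j}\circ a_{j,i}$ and $b_{i,j}\circ b_{j,i}$ for $i<j$, leaving the unpaired $a_{i,i}, b_{i,i}$ for diagonal entries. On a symmetric input each such block restores symmetry and modifies exactly one restricted coordinate $\log w_{i,j}$ (with $i\leq j$): explicitly, the paired $a$-block sends $\log w_{i,j}\mapsto \log w_{i,j}+\log(w_{i-1,j}+w_{i,j-1})$, giving a restricted Jacobian matrix of the form $I+e_{(i,j)}v^\top$ with $v_{(i,j)}=0$, hence determinant $1+v^\top e_{(i,j)}=1$. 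The paired $b$-block sends $\log w_{i,j}\mapsto -\log w_{i,j}+\cdots$, yielding analogously $v_{(i,j)}=-2$ and determinant $-1$. The diagonal cases are handled the same way, using the symmetry identities $w_{i,i-1}=w_{i-1,i}$ and $w_{i+1,i}=w_{i,i+1}$. By the chain rule, the total restricted Jacobian is a product of $\pm 1$'s, hence $\pm 1$.

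The main obstacle in this second step is the bookkeeping: one has to check that every neighbor of each modified coordinate is itself a restricted coordinate or can be rewritten as one via symmetry (which is routine from the geometry of $\mathcal{I}$), and that the non-symmetric intermediate arrays produced along the composition do not affect the product of Jacobian determinants. They do not, since each block's restricted Jacobian equals $\pm 1$ \emph{pointwise}, independently of the input. Once these points are in place, the proposition follows.
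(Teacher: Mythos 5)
Your proposal is correct, and while the equivariance part coincides with the paper's (which likewise reduces $\grsk(\bm{w}^\top)=\grsk(\bm{w})^\top$ to the identities $a_{j,i}(\bm{w}^\top)=a_{i,j}(\bm{w})^\top$ and its $b$-analogue), your treatment of the Jacobian is a genuinely different route. The paper does \emph{not} argue block-by-block: it first restricts to the largest square sub-index-set $\mathcal{I}'\subseteq\mathcal{I}$ with corner $(1,1)$, invokes the already-established square symmetric case (\cite[Thm~5.2]{OSZ14}) for volume preservation there, and then observes that every remaining $\rho_{k,l}$ with $(k,l)\in\mathcal{I}\setminus\mathcal{I}'$, $k\leq l$, in fact has $k<l$, so it reads and writes only entries with $i\leq j$ and is therefore trivially volume preserving on the restricted coordinates. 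Your argument instead re-proves the square case from scratch by pairing $a_{i,j}\circ a_{j,i}$ and $b_{i-k,j-k}\circ b_{j-k,i-k}$; the needed commutations do hold (any local move of $\rho_{i,j}$ and any of $\rho_{j,i}$ sit at $\ell^1$-distance at least $2$, since $\abs{k'-k-d}+\abs{k'-k+d}\geq 2d\geq 2$ for $d=j-i$), each block maps symmetric arrays to symmetric arrays, its restricted action touches a single coordinate whose update reads only restricted coordinates, and the matrix-determinant-lemma computation gives $+1$ for $a$-blocks and $-1$ for $b$-blocks. This buys a self-contained proof at the cost of length; the paper's buys brevity at the cost of an external citation. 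One phrasing in your write-up deserves tightening: the reason ``non-symmetric intermediate arrays do not affect the product'' is not really that the block Jacobians are input-independent, but that symmetry is restored at every block boundary, so the whole composition genuinely factors through the restricted coordinate space and the chain rule applies there; the input-independence of the $\pm1$ values is then only a convenience, not the justification.
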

\begin{proof}
The fact that $\grsk\big(\bm{w}^\top\big)=\grsk(\bm{w})^\top$ is an easy consequence of the inductive construction~\eqref{rsk_constr} of $\grsk$, since local moves are clearly symmetric, in the sense that $a_{j,i}\big(\bm{w}^\top\big) = a_{i,j}(\bm{w})^\top$, and the same holds for $b_{i,j}$.
Let us now check the volume preserving property in the case of symmetric $\bm{w}$. In the case of a \emph{square} symmetric array $\bm{w}$, this has been already checked in~\cite[Thm~5.2]{OSZ14}.
On the other hand, every symmetric array $\bm{w}$ will contain a subarray $\bm{w}|_{\mathcal{I}'}$, where $\mathcal{I}'$ is the biggest square subset of $\mathcal{I}$ with upper left index $(1,1)$.
By the inductive construction~\eqref{rsk_constr} of $\grsk$, we can obtain the $\grsk$ image of $\bm{w}$ by first applying the $\grsk$ mapping
to $\bm{w}|_{\mathcal{I}'}$ and then insert the rest of the entries via a suitable sequence of mappings $\rho_{k,l}$ with $(k,l)\in\mathcal{I}\setminus\mathcal{I}'$.
Since $\mathcal{I}'$ is square, we know that the claim holds for $\mathcal{I}'$, i.e.\ the transformation
\begin{align*}
 (\log w_{i,j} \colon (i,j)\in\mathcal{I}', \, i\leq j)
\mapsto
(\log t_{i,j} \colon (i,j)\in\mathcal{I}', \, i\leq j) 
\end{align*}
 is volume preserving.
Next, we apply the sequence of moves $\rho_{k,l}$  for
$(k,l)\in\mathcal{I}\setminus\mathcal{I}'$ with $k\leq l$ in the order specified by the inductive construction of $\grsk$. 
Now, every $(k,l)\in\mathcal{I}\setminus\mathcal{I}'$ with $k\leq l$ actually satisfies $k<l$ hence, crucially, the corresponding mapping $\rho_{k,l}$ does \emph{not} involve any symmetric
 variables, i.e.\ acts on the entries indexed by $i\leq j$ only and does {\it not} involve entries indexed by $i> j$.
It follows that, since all mappings $\rho_{k,l}$'s are volume preserving in logarithmic variables (as local moves trivially are), after applying all $\rho_{k,l}$'s with $(k,l)\in\mathcal{I}\setminus\mathcal{I}'$, the volume of the upper ``triangular'' part of the array is still preserved, thus leading to the volume
preserving property of the map \eqref{eq:symmetricgRSK}.
\end{proof}

The exactly solvable distribution on symmetric arrays
that links to $\mathfrak{so}_{2n+1}$-Whittaker functions is given by
\begin{definition}
\label{def:restrLogGammaDistribution}
 For a triangular index set $\{i+j\leq 2n+1\}$ and a symmetric weight array $\bm{W}=\{W_{i,j} \colon i+j\leq 2n+1\}$,
 we define the symmetric $(\bm{\alpha},\gamma)$-log-gamma measure to be the law on $\bm{W}$ when the entries 
on or above the diagonal are independent and distributed as
\begin{equation}
\label{eq:restrLogGammaDistribution}
W_{i,j}^{-1} \sim
\begin{cases}
{\rm Gamma}(\alpha_i + \gamma,1/2) &1\leq i=j \leq n \, , \\
{\rm Gamma}(\alpha_i + \alpha_j + 2\gamma,1) &1\leq i<j\leq n \, , \\
{\rm Gamma}(\alpha_i + \alpha_{2n-j+1},1) &1\leq i\leq n, \, n < j\leq 2n-i+1 \, ,
\end{cases}
\end{equation}
for some $\bm{\alpha}\in\R_+^n$ and $\gamma\geq 0$. 
We will refer to the directed polymer on such arrays as the symmetric $(\bm{\alpha},\gamma)$-log-gamma polymer.
\end{definition}
Let us note that the joint law of the upper entries of $\bm{W}$ is
\begin{equation}
\label{eq:symP2LlogGammaDensity}
\begin{split}
&\P(W_{i,j}\in \diff w_{i,j} \,\,\, \forall i\leq j)
= \prod_{i=1}^n \frac{w_{i,i}^{-\alpha_i - \gamma}}
{2^{\alpha_i+\gamma} \Gamma(\alpha_i+\gamma)}
\prod_{1\leq i< j\leq n} \frac{w_{i,j}^{-\alpha_i-\alpha_j-2\gamma}}{\Gamma(\alpha_i + \alpha_j+2\gamma)} \\
&\qquad \times \prod_{\substack{1\leq i\leq n \\ n < j \leq 2n-i+1}}
\frac{w_{i,j}^{-\alpha_i - \alpha_{2n-j+1}}}{\Gamma(\alpha_i + \alpha_{2n-j+1})} 
 \exp\bigg(-\sum_{i=1}^n \frac{1}{2w_{i,i}}
-\sum_{i<j} \frac{1}{w_{i,j}}\bigg)
\prod_{i\leq j} \frac{\diff w_{i,j}}{w_{i,j}} \, .
\end{split}
\end{equation}
\begin{remark}
\label{rem:symm-restrPolymer}
{\rm
Based on the discussion on the relation between symmetric and restricted polymer, we can easily conclude that
the exactly solvable measure for a restricted polymer is deduced from the symmetric law and amounts to only modifying the law of the
diagonal entries of the latter to $W_{i,i}^{-1} \sim {\rm Gamma}(\alpha_i + \gamma,1) $. We call this measure the restricted $(\bm{\alpha},\gamma)$-log-gamma measure and the corresponding restricted polymer the restricted $(\bm{\alpha},\gamma)$-log-gamma polymer.}
\end{remark}
Due to the symmetry of the output array $\bm{T}=\grsk(\bm{W})$, which follows from Proposition \ref{prop:symmetricgRSK}, we have that 
\begin{equation}
\label{eq:symmP2LpartFn}
Z^{\mathrm{sym}}_{2n} = 2 \sum_{i=1}^n Z^{\mathrm{sym}}_{i,2n-i+1} \, .
\end{equation}
The next proposition gives the joint law of the point-to-point partition functions in the right hand side.
The proof follows the same steps as that of Lemma \ref{lemma:P2PjointDistr} 
 (up to incorporating appropriately the symmetry condition) and so we omit it.
\begin{lemma}
\label{lemma:symP2PjointDistr}
For the symmetric $(\bm{\alpha},\gamma)$-log-gamma polymer, the joint law of the point-to-point partition functions at time $2n$ is
\[
\P(Z^{\mathrm{sym}}_{i,2n+1-i} \in 2^{-1} \diff x_i ~~ \forall i=1,\dots,n)
=
\frac{1}{\rG_{\bm{\alpha},\gamma}}
\rPhi_{\bm{\alpha},\gamma}(\bm{x})
\prod_{i=1}^n \frac{\diff x_i}{x_i}
\]
for $\bm{x}\in\R_+^n$.
 The normalization constant $\rG_{\bm{\alpha},\gamma}$ and the function $\rPhi_{\bm{\alpha},\gamma}$ are given by
\begin{align}
\label{eq:symP2Lnormalization}
\rG_{\bm{\alpha},\gamma}
:= &\prod_{i=1}^n \Gamma(\alpha_i + \gamma)
\prod_{1\leq i < j\leq n} \!\! \Gamma(\alpha_i + \alpha_j +2\gamma)
\prod_{1\leq i \leq j\leq n} \!\!
\Gamma(\alpha_i + \alpha_j) \, , \\
\label{eq:symP2LpartFnsDensity}
\begin{split}
\rPhi_{\bm{\alpha},\gamma}(\bm{x})
:= &\int_{\rT_{2n}(\bm{x})}
\tau_0^{-\gamma}
\prod_{k=1}^n
\bigg(\frac{\tau_{2n-2k+1}^2}{\tau_{2n-2k+2}\tau_{2n-2k}}\bigg)^{-\alpha_k} \\
&\times \exp\bigg(-\frac{1}{t_{1,1}} -\sum_{1<i\leq j} \frac{t_{i-1,j}}{t_{i,j}} - \sum_{i<j} \frac{t_{i,j-1}}{t_{i,j}} \bigg)
\prod_{\substack{i\leq j, \\ i+j\leq 2n}} \frac{\diff t_{i,j}}{t_{i,j}}
\, ,
\end{split}
\end{align}
where the implicit index range is $i+j\leq 2n+1, i\leq j$, and $\rT_{2n}(\bm{x})$ denotes the set of all triangular arrays 
$\{ t_{i,j} \colon i+j\leq 2n+1, \, i\leq j\} \in \R_+^{n(n+1)}$ with positive entries such that $(t_{1,2n},t_{2,2n-1},\dots,t_{n,n+1}) = \bm{x}$.
\end{lemma}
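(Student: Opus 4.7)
The plan is to mirror the proof of Lemma~\ref{lemma:P2PjointDistr}, exploiting the symmetric version of the geometric $\rsk$ correspondence provided by Proposition~\ref{prop:symmetricgRSK}. The key observation is that if $\bm{W}$ is a symmetric weight array then $\bm{T}=\grsk(\bm{W})$ is symmetric too, and the restriction of the map to the upper part $\{(i,j)\colon i\leq j,\, i+j\leq 2n+1\}$ is volume preserving in logarithmic coordinates. Consequently I can rewrite the joint density of the upper weights appearing in~\eqref{eq:symP2LlogGammaDensity} as a density on the upper part of $\bm{T}$.

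The powers of the $w_{i,j}$'s are then translated via property~\ref{prop:gRSKproperties_type} of Proposition~\ref{prop:gRSKproperties} and the symmetry $w_{i,j}=w_{j,i}$, which in particular implies $\tau_m=\tau_{-m}$. Collecting from the diagonal, from the off-diagonal $n\times n$ block and from the anti-diagonal all contributions to the exponent of $\alpha_k$ yields the single factor $(\tau_{2n-2k+1}^2/(\tau_{2n-2k+2}\tau_{2n-2k}))^{-\alpha_k}$, exactly as in Lemma~\ref{lemma:P2PjointDistr}; while~\eqref{eq:gRSKprodSubarray} applied with $(p,q)=(n,n)$ produces $\tau_0^{-\gamma}$. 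For the exponential factor, property~\ref{prop:gRSKproperties_invWeights} together with the symmetry gives
\[
\sum_{i=1}^n \frac{1}{2w_{i,i}} + \sum_{i<j}\frac{1}{w_{i,j}}
= \frac{1}{2}\sum_{i+j\leq 2n+1}\frac{1}{w_{i,j}}
= \frac{1}{2}\mathcal{E}(\bm{t}).
\]

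The main obstacle, relative to the non-symmetric proof of Lemma~\ref{lemma:P2PjointDistr}, is reconciling this intermediate density with the statement: the rate-$1/2$ marginals of $W_{i,i}^{-1}$ introduce an extra factor $2^{-\sum_{k}(\alpha_k+\gamma)}$ in the normalization, together with a factor $1/2$ in front of $\mathcal{E}(\bm{t})$. I deal with this by performing the global rescaling $t_{i,j}=s_{i,j}/2$ on the upper entries. Writing $n_k$ for the number of entries on the $k$-th diagonal of the index set $\{i+j\leq 2n+1\}$, one checks that $n_0=n$ and $(n_{2n-2k+1},n_{2n-2k+2},n_{2n-2k})=(k,k-1,k)$, so that the prefactor $\tau_0^{-\gamma}\prod_{k=1}^n(\tau_{2n-2k+1}^2/(\tau_{2n-2k+2}\tau_{2n-2k}))^{-\alpha_k}$ scales by exactly $2^{\sum_{k}(\alpha_k+\gamma)}$, cancelling the residual power of $2$ in the normalization; meanwhile, since ratios of the $t_{i,j}$'s are invariant under global rescaling and $\mathcal{E}$ is purely inhomogeneous at the corner, the substitution transforms $\tfrac{1}{2}\mathcal{E}(\bm{t})$ into $1/s_{1,1}+\sum_{1<i\leq j} s_{i-1,j}/s_{i,j}+\sum_{i<j} s_{i,j-1}/s_{i,j}$, which is precisely the exponent in~\eqref{eq:symP2LpartFnsDensity}.

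To finish, I invoke property~\ref{prop:gRSKproperties_partitionFn} to identify $t_{i,2n-i+1}=Z^{\mathrm{sym}}_{i,2n-i+1}$, so that the rescaled boundary variables $s_{i,2n-i+1}=2\,Z^{\mathrm{sym}}_{i,2n-i+1}$ play the role of the $x_i$ in the statement and account for the factor $2^{-1}$ in $\P(Z^{\mathrm{sym}}_{i,2n-i+1}\in 2^{-1}\diff x_i)$. Integrating out the remaining interior variables $\{s_{i,j}\colon i\leq j,\, i+j\leq 2n\}$ produces $\rPhi_{\bm{\alpha},\gamma}(\bm{x})$ and completes the derivation.
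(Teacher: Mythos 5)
Your proof is correct and fills in exactly the argument the paper omits, namely the steps of Lemma~\ref{lemma:P2PjointDistr} adapted to the symmetric setting via Proposition~\ref{prop:symmetricgRSK}. The only genuinely new bookkeeping --- the factor $2^{-\sum_k(\alpha_k+\gamma)}$ from the rate-$\tfrac12$ diagonal weights, the identity $\sum_i \tfrac{1}{2w_{i,i}}+\sum_{i<j}\tfrac{1}{w_{i,j}}=\tfrac12\mathcal{E}(\bm{t})$, and their simultaneous absorption by the rescaling $t_{i,j}=s_{i,j}/2$ using the diagonal counts $(n_{2n-2k+1},n_{2n-2k+2},n_{2n-2k})=(k,k-1,k)$ and the fact that $1/t_{1,1}$ is the only non-scale-invariant term of $\mathcal{E}$ --- is handled correctly and accounts for both the absence of powers of $2$ in $\rG_{\bm{\alpha},\gamma}$ and the $2^{-1}\diff x_i$ in the statement.
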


Lemma \ref{lemma:symP2PjointDistr} allows us
  to obtain a Whittaker integral formula for the Laplace transform of $Z^{\mathrm{sym}}_{2n}$ and $\rZ_{2n}$ as stated in the 
  next theorem. The proof is omitted as it follows the same steps as in Theorems 
  \ref{thm:P2LLaplaceTransf} and \ref{thm:P2HLLaplaceTransf}.
\begin{theorem}
\label{thm:symP2LLaplaceTransf}
The Laplace transform of the point-to-line partition functions for the symmetric $(\bm{\alpha},\gamma)$-log-gamma polymer and the restricted $(\bm{\alpha},\gamma)$-log-gamma polymer, denoted by $Z^{\mathrm{sym}}_{2n}$ and $\rZ_{2n}$ respectively, can be written in terms of orthogonal Whittaker functions as:
\begin{equation}
\label{eq:symP2LLaplaceTransf}
\E\Big[e^{- r \rZ_{2n}}\Big]
= \E\Big[e^{- r Z^{\mathrm{sym}}_{2n}}\Big] 
= \frac{r^{\sum_{k=1}^{n} (\alpha_k +\gamma)}}
{\rG_{\bm{\alpha},\gamma}}
\int_{\R_+^n} 
\bigg(\prod_{i=1}^n x_i\bigg)^{\gamma}
e^{-r x_1}
\Psi_{\bm{\alpha}}^{\mathfrak{so}_{2n+1}}(\bm{x})
\prod_{i=1}^n \frac{\diff x_i}{x_i}
\end{equation}
for all $r>0$, where $\rG_{\bm{\alpha},\gamma}$ is defined by~\eqref{eq:symP2Lnormalization}.
\end{theorem}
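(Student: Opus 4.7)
The plan is to handle the two equalities in~\eqref{eq:symP2LLaplaceTransf} separately. For the first identity $\E[e^{-r\rZ_{2n}}]=\E[e^{-rZ^{\mathrm{sym}}_{2n}}]$, I would invoke the path-counting identity
\[
Z^{\mathrm{sym}}_{2n}=\sum_{\pi\in\rPi_{2n}}\prod_{(i,j)\in\pi}(1+\delta_{i,j})W_{i,j}
\]
recorded at the start of Subsection~\ref{sec:restricted}: this shows that a symmetric polymer is distributionally the same as a restricted polymer in which each diagonal weight is doubled. Combined with the elementary Gamma scaling $2W^{\mathrm{sym}}_{i,i}\stackrel{d}{=}W^{\mathrm{rest}}_{i,i}$---which is exactly why~\eqref{eq:restrLogGammaDistribution} carries rate $1/2$ on the diagonal while Remark~\ref{rem:symm-restrPolymer} prescribes rate $1$---one obtains $\rZ_{2n}\stackrel{d}{=}Z^{\mathrm{sym}}_{2n}$, and hence the equality of the two Laplace transforms.

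For the second equality I would proceed in complete analogy with the proof of Theorem~\ref{thm:P2LLaplaceTransf}. Combining~\eqref{eq:symmP2LpartFn} with Lemma~\ref{lemma:symP2PjointDistr}, the change of variable $y_i=2Z^{\mathrm{sym}}_{i,2n-i+1}$ gives
\[
\E\big[e^{-rZ^{\mathrm{sym}}_{2n}}\big]
=\frac{1}{\rG_{\bm{\alpha},\gamma}}\int_{\R_+^n}e^{-r\sum_{i=1}^n x_i}\,\rPhi_{\bm{\alpha},\gamma}(\bm{x})\prod_{i=1}^n\frac{\diff x_i}{x_i}.
\]
Substituting the integral representation~\eqref{eq:symP2LpartFnsDensity} of $\rPhi_{\bm{\alpha},\gamma}$ absorbs the $\bm{x}$-integration along the antidiagonal into one integral over the upper-triangular array $\{t_{i,j}:i\leq j,\,i+j\leq 2n+1\}$. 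I would then perform the change of variables $t_{i,j}=(rs_{i,j})^{-1}$, which is volume-preserving in logarithmic coordinates and reverses each arrow of the energy functional~\eqref{eq:energy}. Tracking the $r$-powers extracted from the $\alpha_k$-ratios and from $\tau_0^{-\gamma}$ produces the prefactor $r^{\sum_{k=1}^n(\alpha_k+\gamma)}$, while $\tau_0^{-\gamma}$ itself becomes $\bigl(\prod_{i=1}^n s_{i,i}\bigr)^\gamma$, yielding the $\bigl(\prod_i x_i\bigr)^\gamma$ factor once one sets $x_i:=s_{i,i}$.

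Finally I would reorder the integration by fixing the diagonal $(s_{1,1},\dots,s_{n,n})$ and integrating first over the strictly-upper variables $\{s_{i,j}:i<j\}$. The resulting inner integral matches, arrow for arrow, the half-triangular integral of Definition~\ref{def:soWhittakerFn} for $\Psi^{\mathfrak{so}_{2n+1}}_{\bm{\alpha}}$: the antidiagonal boundary contributes the ``vertical wall'' terms $e^{-1/s_{i,2n+1-i}}$; the alternating-diagonal prefactors $(\sigma_{2n-2k+1}^2/(\sigma_{2n-2k+2}\sigma_{2n-2k}))^{\alpha_k}$ reassemble into the factor $\type(\bm{z})^{\bm{\alpha}^{\pm}}$; and the remaining exponential terms become $\mathcal{E}^{\llrighttriangle[0.15]}$. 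Only one $\mathfrak{so}_{2n+1}$-Whittaker function appears---in contrast to Theorem~\ref{thm:P2LLaplaceTransf}, where the full triangle splits into two half-triangles---because the constraint $i\leq j$ leaves only the upper half-triangle. The sole genuine obstacle is the bookkeeping required to identify this upper-triangular lattice with the ``staircase'' half-triangle $\{(i,j):1\leq i\leq 2n,\,1\leq j\leq\lceil i/2\rceil\}$ of Definition~\ref{Givental_Whit}, so that types, energies and walls line up correctly; both index sets have cardinality $n(n+1)$, and this identification is the direct analogue of the splitting carried out for Theorem~\ref{thm:P2LLaplaceTransf}, so the step is routine once that prototype is granted.
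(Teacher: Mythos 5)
Your proposal is correct and is precisely the argument the paper has in mind: the paper in fact omits the proof of this theorem, stating only that it follows the same steps as Theorems~\ref{thm:P2LLaplaceTransf} and~\ref{thm:P2HLLaplaceTransf}, and your write-up fills in exactly those steps --- the distributional identity $\rZ_{2n}\stackrel{d}{=}Z^{\mathrm{sym}}_{2n}$ via the diagonal Gamma rescaling, then Lemma~\ref{lemma:symP2PjointDistr}, the change of variables $t_{i,j}=(rs_{i,j})^{-1}$, and the identification of the single upper half-triangle with the $\mathfrak{so}_{2n+1}$-Whittaker integral of Definition~\ref{Givental_Whit}. Your bookkeeping of the extracted $r$-powers (giving $r^{\sum_k(\alpha_k+\gamma)}$ and the $(\prod_i x_i)^\gamma$ factor) and of the index-set cardinalities is accurate.
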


\subsection{Contour integrals}
\label{subsec:ContourIntegrals}
We now write the integrals of Whittaker functions obtained in~\eqref{eq:P2LLaplaceTransf} and~\eqref{eq:P2HLLaplaceTransf} as contour integrals.

In formula~\eqref{eq:P2LLaplaceTransf} for the Laplace transform of the point-to-line partition function, the integral 
\[
\int_{\R_{+}^n} 
\bigg(\prod_{i=1}^n x_i\bigg)^{\gamma}
e^{-r x_1}
\Psi_{\bm{\alpha}}^{\mathfrak{so}_{2n+1}}(\bm{x})
\Psi_{\bm{\beta}}^{\mathfrak{so}_{2n+1}}(\bm{x})
\prod_{i=1}^n \frac{\diff x_i}{x_i}
\]
is analogous, at least for $\gamma=0$, to the Bump-Stade identity~\eqref{eq:Stade}, where $\mathfrak{gl}_n$-Whittaker functions are replaced with the corresponding orthogonal ones. However, a closed formula for our integral does not appear in the literature, and there are actually some reasons why one may not expect it to be computable in terms of products and ratios of gamma functions, see \cite{Bum89},
section 2.6.
 However, we can still turn formula~\eqref{eq:P2LLaplaceTransf} into a contour integral of Gamma functions through the
 Plancherel theorem of $\mathfrak{gl}_n$-Whittaker function and a combination of both the Bump-Stade and Ishii-Stade identities. 
 The key tool is the following lemma.

\begin{lemma}
\label{lemma:WhittakerTransforms}
The $\mathfrak{gl}_n$-Whittaker isometry between the spaces $L^2(\R^n_+, \prod_{i=1}^n \diff x_i / x_i)$ and $L^2_{\sym}(\iota\R^n, s_n(\bm{\lambda}) \diff\bm{\lambda})$ defined in Theorem~\ref{thm:Plancherel} maps
\begin{align*}
 {\rm (a)} \qquad f(\bm{x}) := e^{-r x_1} \Psi^{\mathfrak{gl}_n}_{\bm{\alpha}}
\quad&\longmapsto\quad
\hat{f}(\bm{\lambda}) := r^{-\sum_{i=1}^n (\lambda_i + \alpha_i)}
\prod_{1\leq i,j\leq n} \!\! \Gamma(\lambda_i + \alpha_j) \\
\intertext{for all $r>0$ and $\bm{\alpha}\in\C^n$ such that $\Re(\alpha_j) >0$ for all $j$, and}
{\rm (b)} \quad g(\bm{x}) := \bigg(\prod_{i=1}^n x_i \bigg)^{-s} \Psi^{\mathfrak{so}_{2n+1}}_{\bm{\beta}}(\bm{x})
\quad&\longmapsto\quad
\hat{g}(\bm{\lambda})
:= \frac{\prod_{1\leq i,j\leq n} \Gamma(s-\lambda_i+\beta_j) \Gamma(s-\lambda_i-\beta_j)}{\prod_{1\leq i<j\leq n} \Gamma(2s-\lambda_i - \lambda_j)}
\end{align*}
for all $s\in\C$ and $\bm{\beta}\in\C^n$ such that $\Re(s) > \abs{\Re(\beta_j)}$ for all $j$.
\end{lemma}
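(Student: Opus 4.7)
The plan is to recognise that both transforms are direct applications of the two integral identities stated earlier in the paper, once one uses the translation property \eqref{eq:glWhittakerFnTranslation} for $\mathfrak{gl}_n$-Whittaker functions. By definition of the $\mathfrak{gl}_n$-Whittaker transform in Theorem~\ref{thm:Plancherel}, for any admissible function $h$ one has $\hat h(\bm\lambda)=\int_{\R_+^n} h(\bm x)\,\Psi^{\mathfrak{gl}_n}_{\bm\lambda}(\bm x)\prod_i dx_i/x_i$, so we simply need to evaluate this integral in each case.

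For part (a), plugging $f(\bm x)=e^{-rx_1}\Psi^{\mathfrak{gl}_n}_{\bm\alpha}(\bm x)$ into the transform produces the left-hand side of the Bump-Stade identity \eqref{eq:Stade}, with the second spectral parameter taken to be $\bm\lambda$ instead of $\bm\beta$. Theorem~\ref{thm:Stade} then immediately yields
\[
\hat f(\bm\lambda) = r^{-\sum_{i=1}^n(\lambda_i+\alpha_i)}\prod_{1\leq i,j\leq n}\Gamma(\lambda_i+\alpha_j).
\]
The integrability hypothesis $\Re(\alpha_i+\lambda_j)>0$ required by Theorem~\ref{thm:Stade} is automatic, since $\bm\lambda\in\iota\R^n$ forces $\Re(\lambda_j)=0$, while the assumption $\Re(\alpha_i)>0$ is part of the hypothesis of Lemma~\ref{lemma:WhittakerTransforms}(a).

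For part (b), the key manoeuvre is to absorb the prefactor $(\prod_i x_i)^{-s}$ into the $\mathfrak{gl}_n$-Whittaker function via \eqref{eq:glWhittakerFnTranslation}: taking $c=-s$ and $\bm\alpha=\bm\lambda$ gives
\[
\bigg(\prod_{i=1}^n x_i\bigg)^{-s}\Psi^{\mathfrak{gl}_n}_{\bm\lambda}(\bm x) = \Psi^{\mathfrak{gl}_n}_{\bm\lambda-s}(\bm x) = \Psi^{\mathfrak{gl}_n}_{-\bm\alpha'}(\bm x),
\qquad \bm\alpha':=s\bm 1-\bm\lambda.
\]
Inserting this identity under the integral converts $\hat g(\bm\lambda)$ into exactly the left-hand side of the Ishii-Stade identity \eqref{eq:IshiiStade} with spectral parameters $\bm\alpha'$ and $\bm\beta$. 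Applying Theorem~\ref{thm:IshiiStade} and substituting $\alpha'_i=s-\lambda_i$ in $\alpha'_i\pm\beta_j$ and $\alpha'_i+\alpha'_j=2s-\lambda_i-\lambda_j$ produces the claimed expression for $\hat g(\bm\lambda)$. The hypothesis $\Re(\alpha'_i)>|\Re(\beta_j)|$ of Theorem~\ref{thm:IshiiStade} reduces, on $\bm\lambda\in\iota\R^n$, to $\Re(s)>|\Re(\beta_j)|$, which is the assumption of the lemma.

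There is essentially no obstacle here: both identities were quoted earlier precisely in the form needed, and the only non-cosmetic input is the recognition that \eqref{eq:glWhittakerFnTranslation} allows us to package the monomial factor $(\prod_i x_i)^{-s}$ into a shift of the spectral parameter, turning the $\mathfrak{gl}_n\times \mathfrak{so}_{2n+1}$ pairing against $\Psi^{\mathfrak{gl}_n}_{\bm\lambda}$ into the Ishii-Stade pairing against $\Psi^{\mathfrak{gl}_n}_{-(s\bm 1-\bm\lambda)}$.
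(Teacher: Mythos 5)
Your formal computation of the two transforms is correct and coincides with the paper's: part (a) is the Bump--Stade identity~\eqref{eq:Stade} read with second spectral parameter $\bm{\lambda}$, and part (b) follows by absorbing $(\prod_i x_i)^{-s}$ into the spectral parameter via~\eqref{eq:glWhittakerFnTranslation} and then invoking Ishii--Stade~\eqref{eq:IshiiStade} with $\bm{\alpha}'=s\bm{1}-\bm{\lambda}$; the checks on the hypotheses of Theorems~\ref{thm:Stade} and~\ref{thm:IshiiStade} for $\bm{\lambda}\in\iota\R^n$ are also right.

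However, there is a genuine gap: the lemma asserts that the \emph{isometry of Theorem~\ref{thm:Plancherel}}, a map between $L^2(\R_+^n,\prod_i \diff x_i/x_i)$ and $L^2_{\sym}(\iota\R^n,s_n(\bm{\lambda})\diff\bm{\lambda})$, sends $f\mapsto\hat f$ and $g\mapsto\hat g$. For this to be meaningful one must first verify that $f$ and $g$ actually belong to $L^2(\R_+^n,\prod_i \diff x_i/x_i)$; evaluating the defining integral pointwise in $\bm{\lambda}$ does not by itself establish this, and your closing remark that ``there is essentially no obstacle'' passes over exactly this point. The paper devotes the bulk of its proof to it: it argues that membership of $f$ (resp.\ $g$) in $L^2$ is equivalent to membership of the candidate image $\hat f$ (resp.\ $\hat g$) in $L^2_{\sym}(\iota\R^n,s_n(\bm{\lambda})\diff\bm{\lambda})$, and then proves the latter by checking that the integrands have no poles (this is where the hypotheses $\Re(\alpha_j)>0$ and $\Re(s)>\abs{\Re(\beta_j)}$ are actually used) and by estimating $\abs{\hat f(\bm{\lambda})}^2 s_n(\bm{\lambda})$ and $\abs{\hat g(\bm{\lambda})}^2 s_n(\bm{\lambda})$ via Stirling's approximation $\abs{\Gamma(x+iy)}\sim\sqrt{2\pi}\,\abs{y}^{x-1/2}e^{-\pi\abs{y}/2}$ together with the elementary bound $\sum_{i<j}\abs{\lambda_i\pm\lambda_j}\leq(n-1)\sum_i\abs{\lambda_i}$, obtaining exponential decay $\exp(-c\pi\sum_i\abs{\lambda_i})$. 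This integrability is not a formality: it is precisely what licenses the two applications of the Plancherel theorem in the proof of Theorem~\ref{thm:P2LcontourInt}, where for the analogue of $f$ the paper even needs a separate argument (estimate~\eqref{eq:estimationL2}, relying on the extra parameter $\gamma$). You should add the square-integrability verification to complete the proof.
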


\begin{proof}
(a) Assuming that $f$ is square-integrable, the Bump-Stade identity~\eqref{eq:Stade} implies that $\hat{f}$ is indeed the $\mathfrak{gl}_n$-Whittaker transform of $f$. To prove that $f$ belongs to $L^2(\R^n_+, \prod_{i=1}^n \diff x_i / x_i)$, we will show instead the equivalent statement that $\hat{f}$ is in $L^2_{\sym}(\iota\R^n, s_n(\bm{\lambda}) \diff\bm{\lambda})$. It is clear that $\hat{f}$ is a symmetric function, and has no poles, thanks to the assumption that each $\alpha_j$ has positive real part.
Recalling the Stirling approximation of the Gamma function
\begin{equation}
\label{eq:StirlingApprox}
\abs{\Gamma(x+iy)}
\sim
\sqrt{2\pi} \abs{y}^{x-\frac{1}{2}} e^{-\frac{\pi}{2} \abs{y}}
\qquad
\text{as } \abs{y} \to \infty \, ,
\end{equation}
we can compute the asymptotics of $\abs{\hat{f}(\bm{\lambda})}^2 s_n(\bm{\lambda})$ as $\abs{\lambda_i} \to\infty$ for all $i$ and $\abs{\lambda_i - \lambda_j} \to\infty$ for all $i< j$ (which is when $s_n(\bm{\lambda})$ has the worst diverging behavior):
\[
\begin{split}
\abs{\hat{f}(\bm{\lambda})}^2 s_n(\bm{\lambda})
&= \frac{r^{-2\sum_{i} \Re(\alpha_i)}\prod_{i,j} \abs{\Gamma(\lambda_i + \alpha_j)}^2}{(2\pi)^n n! \prod_{i\neq j} \abs{\Gamma(\lambda_i-\lambda_j)}}
\sim \frac{\prod_{i,j}
e^{-\pi\abs{\lambda_i}}}
{\prod_{i< j} e^{-\pi\abs{\lambda_i-\lambda_j}}} \\
&= \exp\bigg(-\pi n \sum_{i} \abs{\lambda_i} + \pi \sum_{i<j} \abs{\lambda_i - \lambda_j} \bigg)
\leq \exp\bigg(- \pi \sum_i \abs{\lambda_i} \bigg) \, .
\end{split}
\]
Here, the symbol $\sim$ denotes asymptotic behavior up to multiplicative constants and powers, and for the last step we have used the following rough estimate:
\begin{equation}
\label{eq:roughEstimate}
\sum_{i<j} \abs{\lambda_i \pm \lambda_j}
\leq \sum_{i<j} \left(\abs{\lambda_i} + \abs{\lambda_j}\right)
= (n-1) \sum_i \abs{\lambda_i} \, .
\end{equation}
This proves that $\abs{\hat{f}(\bm{\lambda})}^2 s_n(\bm{\lambda})$ is integrable on $\iota \R^n$.
 
(b) Consider now $g$ and $\hat{g}$. The fact that the latter is indeed the $\mathfrak{gl}_n$-Whittaker transform of the former follows from property~\eqref{eq:glWhittakerFnTranslation} and Ishii-Stade identity~\eqref{eq:IshiiStade}. Let us prove that $\hat{g}$ belongs to  $L^2_{\sym}(\iota\R^n, s_n(\bm{\lambda}) \diff\bm{\lambda})$. Again, $\hat{g}$ is a symmetric function, and has no poles, thanks to the assumption that $\Re(s)>\abs{\Re(\beta_j)}$ for all $j$. Using~\eqref{eq:StirlingApprox} and~\eqref{eq:roughEstimate}, we compute the asymptotics (up to constants) of $\abs{\hat{g}(\bm{\lambda})}^2 s_n(\bm{\lambda})$ as $\abs{\lambda_i} \to\infty$ for all $i$ and $\abs{\lambda_i \pm \lambda_j} \to\infty$ for all $i< j$:
\[
\begin{split}
\abs{\hat{g}(\bm{\lambda})}^2 s_n(\bm{\lambda})
&= \frac{\prod_{i,j}
\abs{\Gamma(s-\lambda_i+\beta_j)
\Gamma(s-\lambda_i-\beta_j)}^2}
{(2\pi)^n n! \prod_{i<j}
\abs{\Gamma(2s-\lambda_i - \lambda_j)}^2
\prod_{i\neq j}
\abs{\Gamma(\lambda_i - \lambda_j)}}
\sim \frac{\prod_{i,j}
e^{-2\pi\abs{\lambda_i}}}
{\prod_{i<j}
e^{-\pi\abs{\lambda_i + \lambda_j}}
e^{-\pi\abs{\lambda_i-\lambda_j} }} \\
&= \exp\bigg(-2n\pi \sum_{i} \abs{\lambda_i} + \pi \sum_{i<j} \abs{\lambda_i + \lambda_j} + \pi \sum_{i<j} \abs{\lambda_i - \lambda_j} \bigg)
\leq \exp\bigg(-2 \pi \sum_i \abs{\lambda_i} \bigg) \, ,
\end{split}
\]
which proves the integrability of $\abs{\hat{g}(\bm{\lambda})}^2 s_n(\bm{\lambda})$ on $\iota \R^n$.
\end{proof}

\begin{theorem}
\label{thm:P2LcontourInt}
The Laplace transform of the point-to-line partition function $Z_{2n}$ for the
$(\bm{\alpha},\bm{\beta},\gamma)$-log-gamma polymer is given by
\begin{equation}
\label{eq:P2LcontourInt}
\begin{split}
\E\Big[e^{- r \fZ_{\,2n}}\Big] 
= &\frac{r^{\sum_{k=1}^{n} (\alpha_k+\beta_k)}}
{\fG_{\bm{\alpha},\bm{\beta},\gamma}}
\int_{(\epsilon + \iota \R)^n} 
s_n(\bm{\rho}) \diff \bm{\rho}
\int_{(\delta + \iota \R)^n}
s_n(\bm{\lambda}) \diff \bm{\lambda} \,\,
r^{-\sum_{i=1}^n (\lambda_i+\rho_i+\gamma)}  \\
&\times \frac{\prod_{1\leq i,j\leq n}
\Gamma(\lambda_i + \rho_j + \gamma)
\Gamma(\lambda_i + \alpha_j)
\Gamma(\lambda_i - \alpha_j)
\Gamma(\rho_i + \beta_j)
\Gamma(\rho_i - \beta_j)}
{\prod_{1\leq i<j\leq n}
\Gamma(\lambda_i+\lambda_j)
\Gamma(\rho_i + \rho_j) }
\end{split}
\end{equation}
for all $r>0$, where $\fG_{\bm{\alpha},\bm{\beta},\gamma}$ is the constant defined in~\eqref{eq:P2Lnormalization}, $s_n(\bm{\lambda}) \diff \bm{\lambda}$ is the Sklyanin measure as in~\eqref{eq:sklyaninMeasure} and $\delta, \epsilon$ satisfy $\delta > \alpha_j$ and $\epsilon > \beta_j$ for all $j$.
The contour integral~\eqref{eq:P2LcontourInt} is absolutely convergent.
\end{theorem}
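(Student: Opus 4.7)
The starting point is the Whittaker integral representation of Theorem~\ref{thm:P2LLaplaceTransf},
\begin{equation*}
\E\big[e^{-r\fZ_{\,2n}}\big]
= \frac{r^{\sum_{k=1}^n(\alpha_k+\beta_k+\gamma)}}{\fG_{\bm{\alpha},\bm{\beta},\gamma}}\, I \, ,
\qquad I := \int_{\R_+^n}\bigg(\prod_{i=1}^n x_i\bigg)^\gamma e^{-rx_1}\Psi^{\mathfrak{so}_{2n+1}}_{\bm{\alpha}}(\bm{x})\Psi^{\mathfrak{so}_{2n+1}}_{\bm{\beta}}(\bm{x})\prod_{i=1}^n \frac{\diff x_i}{x_i}.
\end{equation*}
The plan is to convert $I$ into a double contour integral by applying the $\mathfrak{gl}_n$-Whittaker Plancherel isometry (Theorem~\ref{thm:Plancherel}) twice, drawing on both transform formulae of Lemma~\ref{lemma:WhittakerTransforms} at each step.

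For the first application, I split $(\prod x_i)^\gamma = (\prod x_i)^{-\delta}(\prod x_i)^{\gamma+\delta}$ with $\delta>\alpha_j$ for all $j$, and write $I = \int_{\R_+^n} f(\bm{x})g(\bm{x})\prod \diff x_i/x_i$ where
\begin{equation*}
f(\bm{x}) := \bigg(\prod_{i=1}^n x_i\bigg)^{-\delta}\Psi^{\mathfrak{so}_{2n+1}}_{\bm{\alpha}}(\bm{x}),
\qquad
g(\bm{x}) := \bigg(\prod_{i=1}^n x_i\bigg)^{\gamma+\delta}e^{-rx_1}\Psi^{\mathfrak{so}_{2n+1}}_{\bm{\beta}}(\bm{x}).
\end{equation*}
Lemma~\ref{lemma:WhittakerTransforms}(b) immediately yields a closed form for $\hat f$. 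To compute $\hat g(\bm{\mu})$, I first use property~\eqref{eq:glWhittakerFnTranslation} to absorb the factor $(\prod x_i)^{\gamma+\delta}$ into the $\mathfrak{gl}_n$-Whittaker parameter, rewriting $\hat g(\bm\mu)$ as $\int_{\R_+^n} e^{-rx_1}\Psi^{\mathfrak{so}_{2n+1}}_{\bm{\beta}}(\bm{x})\Psi^{\mathfrak{gl}_n}_{\bm\mu+\gamma+\delta}(\bm{x})\prod \diff x_i/x_i$, then apply Plancherel a second time by splitting this integrand as $\big[e^{-rx_1}\Psi^{\mathfrak{gl}_n}_{\bm\mu+\gamma+\delta+\epsilon}(\bm x)\big]\cdot\big[(\prod x_i)^{-\epsilon}\Psi^{\mathfrak{so}_{2n+1}}_{\bm\beta}(\bm x)\big]$ for some $\epsilon>\beta_j$, and invoking parts (a) and (b) of Lemma~\ref{lemma:WhittakerTransforms} respectively.

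Combining the two Plancherel applications and performing the changes of variables $\tilde\lambda = \delta-\lambda$ (which simplifies $\hat f$) and $\tilde\rho = \rho+\epsilon$ (which simplifies the inner contour), the auxiliary parameters $\delta$ and $\epsilon$ get absorbed into the contour translations: $\hat f$ produces the Ishii--Stade-type factor $\prod_{i,j}\Gamma(\tilde\lambda_i\pm\alpha_j)/\prod_{i<j}\Gamma(\tilde\lambda_i+\tilde\lambda_j)$, the analogous factor with $\bm\beta$ arises from the second application, and the Bump--Stade-type pairing of two $\mathfrak{gl}_n$-Whittaker transforms generates the cross factor $\prod_{i,j}\Gamma(\tilde\lambda_i+\tilde\rho_j+\gamma)$ together with the overall power of $r$. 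Renaming $\tilde\lambda,\tilde\rho \mapsto \bm\lambda,\bm\rho$ and folding in the prefactor of Theorem~\ref{thm:P2LLaplaceTransf} yields formula~\eqref{eq:P2LcontourInt}, the contours $(\delta+\iota\R)^n$ and $(\epsilon+\iota\R)^n$ precisely matching the admissibility conditions $\delta>\alpha_j$ and $\epsilon>\beta_j$.

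The main obstacle is justifying each application of Plancherel and verifying absolute convergence of the resulting double integral. For Plancherel one needs the intermediate functions to lie in $L^2(\R_+^n,\prod \diff x_i/x_i)$ and their transforms to lie in $L^2_{\mathrm{sym}}(\iota\R^n,s_n(\bm\lambda)\diff\bm\lambda)$; these $L^2$ bounds follow the pattern established in the proof of Lemma~\ref{lemma:WhittakerTransforms}, using Stirling's asymptotic~\eqref{eq:StirlingApprox} and the rough bound~\eqref{eq:roughEstimate}. The positivity of $\gamma$ plays an essential role at this stage: it supplies the additional decay in the cross-term $\Gamma(\tilde\lambda_i+\tilde\rho_j+\gamma)$ along the shifted contours that is needed to dominate the worst-case asymptotic behavior of the Sklyanin densities, as foreshadowed in the remark following Definition~\ref{def:log}.
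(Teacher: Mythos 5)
Your overall strategy coincides with the paper's: two successive applications of the $\mathfrak{gl}_n$-Whittaker--Plancherel theorem, feeding in the Bump--Stade and Ishii--Stade transforms of Lemma~\ref{lemma:WhittakerTransforms}, with the auxiliary shifts $\delta,\epsilon$ absorbed into the contours. Your decomposition is the mirror image of the paper's (you attach the exponential and the positive power to the $\bm{\beta}$-factor rather than the $\bm{\alpha}$-factor), which is immaterial. The problem is the justification of the first Plancherel application. To pair $f$ and $g$ you need \emph{both} in $L^2(\R_+^n,\prod \diff x_i/x_i)$. For $f=(\prod_i x_i)^{-\delta}\Psi^{\mathfrak{so}_{2n+1}}_{\bm{\alpha}}$ this is covered by Lemma~\ref{lemma:WhittakerTransforms}(b). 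But for $g=(\prod_i x_i)^{\gamma+\delta}e^{-rx_1}\Psi^{\mathfrak{so}_{2n+1}}_{\bm{\beta}}$ you assert that the $L^2$ bound ``follows the pattern'' of the lemma via Stirling and~\eqref{eq:roughEstimate}. That pattern requires a closed-form Gamma-product expression for the transform, which this function does not have: neither case (a) nor (b) of the lemma applies to a product of a \emph{positive} power, an exponential, and an orthogonal Whittaker function, and its transform is only identified a posteriori as a contour integral. So the step as written is unjustified. The paper closes exactly this gap by a different device: it observes that $\int |g|^2 \prod \diff x_i/x_i$ is, up to explicit constants, the Laplace transform $\E[e^{-2r\tilde Z^{\rm flat}_{2n}}]$ of the point-to-line partition function of the $(\bm{\beta},\bm{\beta},2(\gamma+\delta))$-log-gamma polymer (equation~\eqref{eq:estimationL2}, via Theorem~\ref{thm:P2LLaplaceTransf} applied with doubled parameters), hence finite because it is the expectation of a bounded random variable. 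This self-referential use of Theorem~\ref{thm:P2LLaplaceTransf} is the actual reason the extra parameter $\gamma$ was built into Definition~\ref{def:log}.

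Relatedly, your closing claim that the positivity of $\gamma$ ``supplies additional decay in the cross-term $\Gamma(\lambda_i+\rho_j+\gamma)$'' misdiagnoses its role. By Stirling~\eqref{eq:StirlingApprox}, the exponential decay of $|\Gamma(x+\iota y)|$ along a vertical line is $e^{-\pi|y|/2}$ regardless of the real part $x$; shifting the argument by $\gamma$ only changes the polynomial prefactor and contributes nothing to the absolute convergence of the final double contour integral. That convergence is proved in the paper by a separate combinatorial bookkeeping of the exponents $\pm\frac{\pi}{2}|\Im(\cdot)|$ (ordering the $\lambda_i$'s and $\rho_i$'s and using~\eqref{eq:roughEstimate}), with $\gamma$ playing no role; $\gamma$ matters only for the $L^2$ estimate~\eqref{eq:estimationL2} described above. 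You would need to supply both of these missing arguments to complete the proof.
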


\begin{proof}
We start from formula~\eqref{eq:P2LLaplaceTransf} and apply the $\mathfrak{gl}_n$-Whittaker-Plancherel
Theorem~\ref{thm:Plancherel} in a two-step procedure.

The integral appearing in formula~\eqref{eq:P2LLaplaceTransf} can be written as
\begin{equation}
\label{eq:plancherel1step}
\int_{\R_{+}^n} 
\bigg(\prod_{i=1}^n x_i\bigg)^{\gamma}
e^{-r x_1}
\Psi_{\bm{\alpha}}^{\mathfrak{so}_{2n+1}}(\bm{x})
\Psi_{\bm{\beta}}^{\mathfrak{so}_{2n+1}}(\bm{x})
\prod_{i=1}^n \frac{\diff x_i}{x_i}
= \int_{\R_{+}^n}
f(\bm{x}) g(\bm{x})
\prod_{i=1}^n \frac{\diff x_i}{x_i} \, ,
\end{equation}
where
\[
f(\bm{x}) :=
\bigg(\prod_{i=1}^n x_i\bigg)^{\gamma+\epsilon}
e^{-r x_1}
\Psi_{\bm{\alpha}}^{\mathfrak{so}_{2n+1}}(\bm{x}) \, ,
\qquad\quad
g(\bm{x}) :=
\bigg(\prod_{i=1}^n x_i\bigg)^{-\epsilon}
\Psi_{\bm{\beta}}^{\mathfrak{so}_{2n+1}}(\bm{x}) \, .
\]
By Lemma~\ref{lemma:WhittakerTransforms}, since $\epsilon > \beta_j > 0$ for all $j$, $g$ belongs to $L^2(\R_{+}^n, \prod_{i=1}^n \diff x_i/x_i)$ and satisfies
\[
\overline{\hat{g}(\bm{\rho})}
= \frac{\prod_{1\leq i,j\leq n} \Gamma(\epsilon+\rho_i+\beta_j) \Gamma(\epsilon+\rho_i-\beta_j)}{\prod_{1\leq i<j\leq n} \Gamma(2\epsilon +\rho_i + \rho_j)}
\]
for all $\bm{\rho}\in\iota\R^n$.
On the other hand, applying Theorem~\ref{thm:P2LLaplaceTransf} in the case where $\bm{\ga}=\bm{\gb}$, $\gamma$ is replaced by $2(\gamma+\epsilon)$ and $r$ is replaced by $2r$, we obtain that 
\begin{equation}
\label{eq:estimationL2}
\int_{\R_{+}^n} \abs{f(\bm{x})}^2 \prod_{i=1}^n \frac{\diff x_i}{x_i}
= \frac{\fG_{\bm{\alpha},\bm{\alpha},2(\gamma+\epsilon)}}
{(2r)^{2\sum_{k=1}^{n} (\alpha_k +\gamma + \epsilon)}}
\E\big[e^{-2r \tilde{Z}^{\rm flat}_{2n} }\big]
< \infty \, ,
\end{equation}
where $\tilde{Z}^{\rm flat}_{2n}$ is the point-to-line partition function of the $(\bm{\alpha},\bm{\alpha},2(\gamma+\epsilon))$-log-gamma polymer. This proves that $f$ also belongs to $L^2(\R_{+}^n, \prod_{i=1}^n \diff x_i/x_i)$, so we can apply the $\mathfrak{gl}_n$-Whittaker-Plancherel theorem in~\eqref{eq:plancherel1step} and obtain:
\begin{equation}
\begin{split}
\label{eq:plancherel2step}
&\int_{\R_{+}^n} 
\bigg(\prod_{i=1}^n x_i\bigg)^{\gamma}
e^{-r x_1}
\Psi_{\bm{\alpha}}^{\mathfrak{so}_{2n+1}}(\bm{x})
\Psi_{\bm{\beta}}^{\mathfrak{so}_{2n+1}}(\bm{x})
\prod_{i=1}^n \frac{\diff x_i}{x_i} \\
= &\int_{(\epsilon +\iota\R)^n}
\hat{f}(\bm{\rho}-\epsilon)
\frac{\prod_{1\leq i,j\leq n}
\Gamma(\rho_i+\beta_j)
\Gamma(\rho_i-\beta_j)}
{\prod_{1\leq i<j\leq n}
\Gamma(\rho_i + \rho_j)}
s_n(\bm{\rho}) \diff \bm{\rho}
\, ,
\end{split}
\end{equation}
after the change of variables $\bm{\rho} \mapsto \bm{\rho}-\epsilon$. To compute $\hat{f}(\bm{\rho}-\epsilon)$, we first notice that by property~\eqref{eq:glWhittakerFnTranslation}
\[
\begin{split}
\hat{f}(\bm{\rho}-\epsilon)
&= \int_{\R_{+}^n}
\bigg(\prod_{i=1}^n x_i\bigg)^{\gamma+\epsilon}
e^{-r x_1}
\Psi_{\bm{\alpha}}^{\mathfrak{so}_{2n+1}}(\bm{x})
\Psi_{\bm{\rho}-\epsilon}^{\mathfrak{gl}_n}(\bm{x})
\prod_{i=1}^n \frac{\diff x_i}{x_i} \\
&= \int_{\R_{+}^n}
\Big[
e^{-r x_1}
\Psi_{\bm{\rho}+\gamma+\delta}^{\mathfrak{gl}_n}(\bm{x}) \Big]
\bigg[
\bigg(\prod_{i=1}^n x_i\bigg)^{-\delta}
\Psi_{\bm{\alpha}}^{\mathfrak{so}_{2n+1}}(\bm{x}) \bigg]
\prod_{i=1}^n \frac{\diff x_i}{x_i} \, .
\end{split}
\]
By Lemma~\ref{lemma:WhittakerTransforms}, since $\gamma\geq 0$, $\delta > \alpha_j>0$ and $\Re(\rho_j)=\epsilon$ for all $j$, the two functions in the square brackets belong to $L^2(\R_{+}^n, \prod_{i=1}^n \diff x_i/x_i)$, with $\mathfrak{gl}_n$-Whittaker transforms given by the same lemma. Applying the Plancherel theorem again, we then obtain
\[
\hat{f}(\bm{\rho}-\epsilon)
= \int_{(\delta+\iota\R)^n}
r^{-\sum_{i=1}^n(\lambda_i + \rho_i + \gamma)} \frac{
\prod_{1\leq i,j\leq n}
\Gamma(\lambda_i + \rho_j + \gamma) \Gamma(\lambda_i + \alpha_j)
\Gamma(\lambda_i - \alpha_j)}
{\prod_{1\leq i<j\leq n}
\Gamma(\lambda_i + \lambda_j)}
s_n(\bm{\lambda}) \diff \bm{\lambda} \, ,
\]
after shifting the contour integral by $\delta$. Plugging the latter formula into~\eqref{eq:plancherel2step} concludes the proof of~\eqref{eq:P2LcontourInt}.

Finally, we are going to show that the integral in \eqref{eq:P2LcontourInt} 
is absolutely convergent, so that the order of integration with respect to $\bm{\lambda}$ and $\bm{\rho}$ does not matter. Note first that the integrand has no poles thanks to the choice of $\delta$ and $\epsilon$.
So we just need to check integrability as $\abs{\Im(\lambda_i)},\abs{\Im(\rho_i)}\to\infty$ for all $i$ and $\abs{\Im(\lambda_i \pm \lambda_j)}, \abs{\Im(\rho_i \pm \rho_j)} \to\infty$ for all $i< j$. Using the asymptotics of the Gamma function~\eqref{eq:StirlingApprox}, it suffices to check the integrability of
\[
\frac{\prod_{i,j}
e^{-\frac{\pi}{2} \abs{\Im(\lambda_i +\rho_j)}}
e^{-\pi \abs{\Im(\lambda_i)}}
e^{-\pi \abs{\Im(\rho_i)}}}
{\prod_{i<j}
e^{-\frac{\pi}{2} \abs{\Im(\lambda_i+\lambda_j)}}
e^{-\frac{\pi}{2} \abs{\Im(\rho_i+\rho_j)}} }
\prod_{i \neq j}
e^{\frac{\pi}{2} \abs{\Im(\lambda_i - \lambda_j)}}
e^{\frac{\pi}{2} \abs{\Im(\rho_i-\rho_j)}} \, .
\]
Since we are looking at the regime of large imaginary parts for $\bm{\rho}$ an $\bm{\lambda}$, we may assume that they
are purely imaginary, hence we need to estimate
\begin{equation}
\label{eq:absContourInt}
-\sum_{i,j}
\big(\abs{\lambda_i + \rho_j} + 2\abs{\lambda_i} + 2\abs{\rho_i} \big)
+ \sum_{i<j}
\big(
\abs{\lambda_i + \lambda_j}
+ \abs{\rho_i + \rho_j} \big)
+\sum_{i \neq j}
\big(\abs{\lambda_i - \lambda_j}
+ \abs{\rho_i - \rho_j} \big) \, .
\end{equation} 
At this stage, since the imaginary $\iota$ will be absorbed by the absolute value, we may assume that $\bm{\lambda}$ and
 $\bm{\rho}$ are real variables and, furthermore, since the above expression is symmetric, we may assume that
\[
\lambda_1\geq \lambda_2 \geq \dots \geq \lambda_n 
\quad\qquad\text{and}\quad\qquad
\rho_1 \leq \rho_2 \leq \dots \leq \rho_n \, .
\]
This will then allow the bound
\[
\begin{split}
&\sum_{i \neq j} \big(\abs{\lambda_i - \lambda_j}
+ \abs{\rho_i - \rho_j} \big)
= 2 \sum_{i<j} (\lambda_i - \lambda_j + \rho_j - \rho_i )
= 2 \sum_{i<j} \abs{(\lambda_i+\rho_j)
- (\rho_i+\lambda_j)} \\
= &\sum_{i,j} \abs{(\lambda_i+\rho_j)
- (\rho_i+\lambda_j)}
\leq \sum_{i,j} \abs{\lambda_i + \rho_j}
+ \sum_{i,j} \abs{\rho_i + \lambda_j}
= 2 \sum_{i,j} \abs{\lambda_i + \rho_j} \, .
\end{split}
\]
Using the latter estimate and the one given in~\eqref{eq:roughEstimate}, we obtain
\[
\begin{split}
\eqref{eq:absContourInt}
&\leq
-\sum_{i,j} \abs{\lambda_i + \rho_j}
-2n \sum_i \big(\abs{\lambda_i}
+\abs{\rho_i} \big)
+(n-1)\sum_i \big(\abs{\lambda_i} + \abs{\rho_i} \big)
+2\sum_{i,j} \abs{\lambda_i + \rho_j} \\
&= (-2n+n-1)\sum_i \big(\abs{\lambda_i} + \abs{\rho_i} \big)
+ \sum_{i,j} \abs{\lambda_i+\rho_j} \\
&\leq (-n-1)\sum_i \big(\abs{\lambda_i} + \abs{\rho_i} \big)
+ n\sum_{i} \abs{\lambda_i}
+ n\sum_{j} \abs{\rho_j} \\
&= -
\sum_i \big(\abs{\lambda_i} + \abs{\rho_i}\big) \, ,
\end{split}
\]
hence the exponential of~\eqref{eq:absContourInt} is integrable for $(\bm{\lambda},\bm{\rho})\in\R^{2n}$.
\end{proof}

A contour integral formula for the point-to-half-line partition function is easier to obtain
 because it requires to apply the $\mathfrak{gl}_n$-Whittaker-Plancherel theorem only once.

\begin{theorem}
\label{thm:P2HLcontourInt}
The Laplace transform of the point-to-half-line partition function $\hZ_{2n}$ for the
$(\bm{\alpha},\bm{\beta})$-log-gamma polymer is given by
\begin{equation}
\label{eq:P2HLcontourInt}
\begin{split}
\E\Big[e^{- r \hZ_{\,2n}}\Big] 
= &\frac{r^{\sum_{k=1}^{n} (\alpha_k+\beta_k)}}
{\hG_{\bm{\alpha},\bm{\beta}}}
\int_{(\delta + \iota \R)^n}
s_n(\bm{\lambda}) \diff \bm{\lambda} \,\,
r^{-\sum_{i=1}^n (\lambda_i+\beta_i)}  \\
&\times \frac{\prod_{1\leq i,j\leq n}
\Gamma(\lambda_i + \alpha_j)
\Gamma(\lambda_i - \alpha_j)
\Gamma(\lambda_i + \beta_j)}
{\prod_{1\leq i<j\leq n}
\Gamma(\lambda_i+\lambda_j) },
\end{split}
\end{equation}
for all $r>0$, where $\hG_{\bm{\alpha},\bm{\beta}}$ is the constant defined in~\eqref{eq:P2HLnormalization}, $s_n(\bm{\lambda}) \diff \bm{\lambda}$ is the Sklyanin measure as in~\eqref{eq:sklyaninMeasure}, and $\delta$ is chosen such that $\delta > \alpha_j$ for all $j$.
\end{theorem}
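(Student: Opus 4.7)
The plan is to convert the Whittaker integral in Theorem \ref{thm:P2HLLaplaceTransf} into a contour integral via a single application of the $\mathfrak{gl}_n$-Whittaker-Plancherel theorem (Theorem \ref{thm:Plancherel}). Compared with the proof of Theorem \ref{thm:P2LcontourInt}, the argument is considerably simpler because the integrand already contains a $\mathfrak{gl}_n$-Whittaker function, so only one Plancherel step is needed rather than two.

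Fix any $\delta > \max_j \alpha_j$. Using property \eqref{eq:glWhittakerFnTranslation}, I split the integrand in \eqref{eq:P2HLLaplaceTransf} as a product $f(\bm{x}) g(\bm{x})$, where
\[
f(\bm{x}) := e^{-rx_1} \Psi^{\mathfrak{gl}_n}_{\bm{\beta}+\delta}(\bm{x})
= \bigg(\prod_{i=1}^n x_i\bigg)^{\delta} e^{-rx_1} \Psi^{\mathfrak{gl}_n}_{\bm{\beta}}(\bm{x}),
\qquad
g(\bm{x}) := \bigg(\prod_{i=1}^n x_i\bigg)^{-\delta} \Psi^{\mathfrak{so}_{2n+1}}_{\bm{\alpha}}(\bm{x}).
\]
Parts (a) and (b) of Lemma \ref{lemma:WhittakerTransforms} give explicit $\mathfrak{gl}_n$-Whittaker transforms of $f$ and $g$, and the hypotheses $\Re(\beta_j + \delta) > 0$ and $\delta > \alpha_j$ are satisfied, so both $f$ and $g$ lie in $L^2(\R_{+}^n, \prod_i \diff x_i/x_i)$.

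Since $g$ is real-valued, Theorem \ref{thm:Plancherel} yields
\[
\int_{\R_{+}^n} f(\bm{x}) g(\bm{x}) \prod_{i=1}^n \frac{\diff x_i}{x_i}
= \int_{\iota\R^n} \hat f(\bm{\lambda})\, \overline{\hat g(\bm{\lambda})}\, s_n(\bm{\lambda})\, \diff \bm{\lambda}.
\]
For $\bm{\lambda}\in\iota\R^n$ and real $\bm{\alpha}$, the conjugate $\overline{\hat g(\bm{\lambda})}$ is obtained by substituting $\lambda_i \mapsto -\lambda_i$ in the formula of Lemma \ref{lemma:WhittakerTransforms}(b), producing Gamma factors $\Gamma(\delta + \lambda_i \pm \alpha_j)$ and $\Gamma(2\delta + \lambda_i + \lambda_j)^{-1}$. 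A translation of the contour, $\bm{\lambda} \mapsto \bm{\lambda} - \delta$, from $\iota\R^n$ to $(\delta + \iota\R)^n$ then eliminates the parameter $\delta$ from every Gamma argument and from the power of $r$, giving exactly \eqref{eq:P2HLcontourInt} after multiplying by the prefactor $r^{\sum_k(\alpha_k+\beta_k)}/\hG_{\bm{\alpha},\bm{\beta}}$.

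The only non-routine point is justifying the contour shift. The Gamma factors in the numerator have poles at $\lambda_i \in \{-\beta_j - k,\, -\alpha_j - k,\, \alpha_j - k \colon k \in \N_0\}$, all lying in the half-plane $\{\Re(\lambda_i) \leq \alpha_j < \delta\}$, so the strip between the two contours is free of singularities (the denominator contributes none, as $\Gamma$ is zero-free). The decay needed to apply Cauchy's theorem and to establish absolute convergence of \eqref{eq:P2HLcontourInt} follows from the Stirling estimate \eqref{eq:StirlingApprox} together with the crude bound \eqref{eq:roughEstimate}, along the same lines as the end of the proof of Theorem \ref{thm:P2LcontourInt} but strictly simpler, since a single block of $n$ contour variables is involved rather than two. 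I expect this last step to pose no genuine obstacle.
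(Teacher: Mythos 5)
Your proposal is correct and follows essentially the same route as the paper: the identical splitting of the integrand into $\big[e^{-rx_1}\Psi^{\mathfrak{gl}_n}_{\bm{\beta}+\delta}\big]\cdot\big[(\prod_i x_i)^{-\delta}\Psi^{\mathfrak{so}_{2n+1}}_{\bm{\alpha}}\big]$ via \eqref{eq:glWhittakerFnTranslation}, followed by a single application of Lemma~\ref{lemma:WhittakerTransforms} and Theorem~\ref{thm:Plancherel}. The only superfluous step is your pole analysis for the ``contour shift'': passing from $\iota\R^n$ to $(\delta+\iota\R)^n$ is merely the change of variables $\bm{\lambda}\mapsto\bm{\lambda}-\delta$, under which the Sklyanin measure is invariant, so no appeal to Cauchy's theorem is required.
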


\begin{proof}
It suffices to write the integral on the right-hand side of~\eqref{eq:P2HLLaplaceTransf} as
\[
\int_{\R_{+}^n} e^{-r x_1}
\Psi_{\bm{\alpha}}^{\mathfrak{so}_{2n+1}}(\bm{x})
\Psi_{\bm{\beta}}^{\mathfrak{gl}_n}(\bm{x})
\prod_{i=1}^n \frac{\diff x_i}{x_i}
= \int_{\R_{+}^n}
\Big[ e^{-r x_1}
\Psi_{\bm{\beta}+\delta}^{\mathfrak{gl}_n}(\bm{x}) \Big]
\bigg[
\bigg(\prod_{i=1}^n x_i\bigg)^{-\delta}
\Psi_{\bm{\alpha}}^{\mathfrak{so}_{2n+1}}(\bm{x}) \bigg]
\prod_{i=1}^n \frac{\diff x_i}{x_i},
\]
where we multiplied and divided by $(\prod_{i=1}^n x_i)^{\delta}$ and used property \eqref{eq:glWhittakerFnTranslation}. 
We now apply Theorem~\ref{thm:Plancherel} to the two functions in the square brackets, 
whose $\mathfrak{gl}_n$-Whittaker-transforms have been computed in Lemma~\ref{lemma:WhittakerTransforms}.
\end{proof}

\section{Zero temperature limit}
\label{sec:zero}

In this section, we derive the zero temperature limit of the formulae provided by Theorems~\ref{thm:P2LLaplaceTransf}, \ref{thm:P2HLLaplaceTransf} and \ref{thm:symP2LLaplaceTransf} for the Laplace transforms of the flat, half-flat and restricted half-flat log-gamma polymer partition functions respectively, deriving what seem to be new formulae for the law of the last passage percolation with exponential waiting times in these three path geometries.

Let us first define, for a given triangular array $\bm{W}=\{W_{i,j}\colon i+j\leq N+1\}$, the zero temperature analogue of~\eqref{eq:flatPartitionFn}, i.e.\ the point-to-line last passage percolation time:
\begin{align*}
\fTau_N &:= \max_{\pi \in \fPi_N} \sum_{(i,j) \in \pi} W_{i,j}
\, .
\end{align*}
Similarly, the zero temperature analogues of~\eqref{eq:hFlatPartitionFn} and~\eqref{eq:rFlatPartitionFn} are the point-to-half-line and restricted point-to-half-line last passage percolation times:
\[
\hTau_N := \max_{\pi\in \hPi_N} \sum_{(i,j)\in \pi} W_{i,j} \, ,
\qquad\quad
\rTau_N := \max_{\pi\in \rPi_N} \sum_{(i,j)\in \pi} W_{i,j} \, .
\]

The following technical proposition, whose proof is easy and omitted, explains how the zero temperature limit works. It is not specific to the log-gamma distribution.
\begin{proposition}
\label{prop:zeroTempLimit}
Let $Z_N^{(\epsilon)}$ be the polymer partition function corresponding to any of the path geometries considered, with disorder given by independent positive weights $\bm{W}^{(\epsilon)}=\{W_{i,j}^{(\epsilon)}\}$, whose distributions depend on a parameter $\epsilon>0$. Let $\tau_N$ be the last passage percolation in the same geometry, with independent positive continuous waiting times $\bm{W}=\{W_{i,j}\}$.
Assuming that each $\epsilon \log W_{i,j}^{(\epsilon)}$ converges in distribution to $W_{i,j}$ as $\epsilon \downarrow 0$, we have:
\begin{enumerate}
\item
$\epsilon \log Z_N^{(\epsilon)}
\xrightarrow[\epsilon\downarrow 0]{(d)}
\tau_N$;
\item
\label{prop:zeroTempLimit_LaplaceTransf}
$\E\left[\exp\left(-e^{{-u/\epsilon}} Z_N^{(\epsilon)} \right)\right]
\xrightarrow{\epsilon \downarrow 0} \P(\tau_N \leq u)$, for all $u\in\R$.
\end{enumerate}
\end{proposition}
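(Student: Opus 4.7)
The central ingredient is the elementary Laplace (log-sum-exp) bound: for any finite collection of positive reals $\{a_\pi\}$ of cardinality $K$ and any $\epsilon>0$,
\begin{equation*}
\max_\pi \epsilon \log a_\pi
\,\leq\, \epsilon \log \sum_\pi a_\pi
\,\leq\, \max_\pi \epsilon \log a_\pi + \epsilon \log K.
\end{equation*}
Applied to $a_\pi = \prod_{(i,j)\in\pi} W_{i,j}^{(\epsilon)}$ with $X_{i,j}^{(\epsilon)} := \epsilon \log W_{i,j}^{(\epsilon)}$, this yields the sandwich
\begin{equation*}
\widetilde\tau_N^{(\epsilon)}
\,\leq\, \epsilon \log Z_N^{(\epsilon)}
\,\leq\, \widetilde\tau_N^{(\epsilon)} + \epsilon\log K_N,
\qquad
\widetilde\tau_N^{(\epsilon)} := \max_\pi \sum_{(i,j)\in\pi} X_{i,j}^{(\epsilon)},
\end{equation*}
where $K_N$ is the (deterministic, finite) number of paths in the prescribed geometry.

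For part (i), the hypothesis that each $X_{i,j}^{(\epsilon)}$ converges in distribution to $W_{i,j}$, combined with independence within both families, upgrades to joint convergence of the full finite array $(X_{i,j}^{(\epsilon)})$ to $(W_{i,j})$. Since $(x_{i,j}) \mapsto \max_\pi \sum_{(i,j)\in\pi} x_{i,j}$ is a continuous functional of finitely many real variables, the continuous mapping theorem gives $\widetilde\tau_N^{(\epsilon)} \to \tau_N$ in distribution. The additive error $\epsilon\log K_N$ vanishes deterministically, so Slutsky's theorem yields $\epsilon\log Z_N^{(\epsilon)} \to \tau_N$ in distribution.

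For part (ii), I would rewrite the Laplace transform as
\begin{equation*}
\E\Big[\exp\big(-e^{-u/\epsilon} Z_N^{(\epsilon)}\big)\Big]
= \E\big[ F_\epsilon(\epsilon \log Z_N^{(\epsilon)} - u)\big],
\qquad F_\epsilon(v) := \exp\big(-e^{v/\epsilon}\big),
\end{equation*}
and observe that $F_\epsilon(v) \to \1\{v \leq 0\}$ pointwise for all $v\neq 0$, with $0\leq F_\epsilon \leq 1$ and $F_\epsilon$ monotone decreasing in $v$. Since $\tau_N$ is a finite maximum of sums of finitely many independent continuous random variables, its law has no atoms, hence $\P(\tau_N = u)=0$. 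Using Skorokhod's representation to realize the convergence from part~(i) almost surely on a common probability space, I would split the expectation over the three events $\{\epsilon\log Z_N^{(\epsilon)} < u-\delta\}$, $\{|\epsilon\log Z_N^{(\epsilon)} - u| \leq \delta\}$, $\{\epsilon\log Z_N^{(\epsilon)} > u+\delta\}$, send $\epsilon\downarrow 0$ via bounded convergence, and then send $\delta\downarrow 0$ using continuity of $v\mapsto \P(\tau_N \leq v)$ at $u$.

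The only genuinely non-routine point is part~(ii): one is composing a varying function $F_\epsilon$ with a varying random variable $\epsilon\log Z_N^{(\epsilon)}$, which is outside the scope of a direct continuous mapping argument. The Skorokhod/Portmanteau sandwich outlined above, exploiting both the monotonicity of $F_\epsilon$ in its argument and the atomlessness of $\tau_N$, is the cleanest way around this mild obstacle.
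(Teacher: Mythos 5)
Your proof is correct. The paper explicitly omits the proof of this proposition as ``easy'', so there is no argument to compare against; your log-sum-exp sandwich $\max_\pi \le \epsilon\log\sum_\pi \le \max_\pi + \epsilon\log K_N$ plus continuous mapping for part (i), and the monotone/Portmanteau sandwich exploiting the atomlessness of $\tau_N$ for part (ii), is precisely the standard tropicalization argument the authors intend, with the only (routine) caveat that the Skorokhod representation should be invoked along arbitrary sequences $\epsilon_n\downarrow 0$ rather than the continuous parameter $\epsilon$.
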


On the other hand, it is easy to check that, if $W^{(\epsilon)}$ is inverse-gamma distributed with parameter $\epsilon \gamma$ and $W$ is exponentially distributed with rate $\gamma$ for some $\gamma>0$, then
\[
\epsilon \log W^{(\epsilon)} \xrightarrow[\epsilon \downarrow 0]{(d)} W \, .
\]
Joining this observation to Proposition~\ref{prop:zeroTempLimit}-\ref{prop:zeroTempLimit_LaplaceTransf}, it is now clear how to recover the distribution of the last passage percolation with exponentially distributed waiting times from the rescaled Laplace transforms of the log-gamma polymer partition functions, in any of the three path geometries.

\subsection{The point-to-line last passage percolation}

In the zero temperature limit of the $(\epsilon\bm{\alpha},\epsilon\bm{\beta}, 0)$-log-gamma polymer partition function (refer to Definition~\ref{def:log} and set $\gamma:=0$), we will be able to obtain the distribution of the point-to-line last passage percolation $\fTau_{2n}$ with independent waiting times distributed as follows:
\begin{equation}
\label{eq:expDistribution}
W_{i,j} \sim
\begin{cases}
{\rm Exp}(\alpha_i + \beta_j) &1\leq i,j\leq n \, , \\
{\rm Exp}(\alpha_i + \alpha_{2n-j+1}) &1\leq i\leq n \, , \,\, n < j\leq 2n-i+1 \, , \\
{\rm Exp}(\beta_{2n-i+1} + \beta_j) &1\leq j\leq n \, , \,\, n < i \leq 2n-j+1 \, .
\end{cases}
\end{equation}

Baik and Rains~\cite{BR01} derived the law of $\fTau_N$ when $W_{i,j}$ are independent, geometrically distributed variables
with parameters  $1-y_i y_{N+1-j}$. In particular, they established that 
\begin{equation}
\label{eq:baikRains}
\P(\fTau_N \leq u)
= \prod_{1\leq i\leq j\leq N}
\!\!\!
(1-y_i y_j)
\!\!\!\!\!\!\!\!
\sum_{\substack{\bm{\mu}\in\Z^N, \\ 0\leq \mu_N \leq \dots \leq \mu_1 \leq u}}
\!\!\!\!\!\!\!\!\!\!
\schur_{2\bm{\mu}}(y_1,\dots,y_N) \, ,
\end{equation}
where $s_{2\bm{\mu}}$ is the Schur polynomial with shape $2\bm{\mu}$.  
On the other hand, the formula~\eqref{eq:flatLPP1} we obtain in the zero temperature limit of~\eqref{eq:P2LLaplaceTransf} gives the law of $\tau^{\rm flat}_{\,2n}$ with exponential weights in terms of a symplectic Cauchy-like identity, i.e.\ as an integral of (the continuum analogue of) \emph{two symplectic} Schur functions. Such a formula thus looks essentially different from the integral of (the continuum analogue of) \emph{one classical} Schur function that one would obtain by taking the exponential limit of~\eqref{eq:baikRains}.
 
Symplectic Schur polynomials can be defined in terms of symplectic tableaux~\cite{FK97} 
or equivalently in terms of symplectic Gelfand-Tsetlin patterns as 
\[
\sp_{\bm{\mu}}(\bm{y})
:=
\!\!\!
\sum_{\substack{\bm{z}\in \Z^{n^2} \\ \cap \GT_{2n}^{\llrighttriangle[0.15]}\left(\bm{\mu}\right)}}
\!\!\!
\prod_{k=1}^{n} y_k^{2\abs{\bm{z}_{2k-1}}
- \abs{\bm{z}_{2k-2}} - \abs{\bm{z}_{2k}}} 
\]
for $\bm{\mu}\in\Z^n$ with $0\leq \mu_n\leq \dots\leq \mu_1$. Here, $\GT_{2n}^{\llrighttriangle[0.15]}(\bm{x})$ is the set of all \emph{symplectic Gelfand-Tsetlin patterns} $\bm{z}$ with real entries, depth $2n$ and last row $(z_{1,1},\dots,z_{n,1})$ equal to $\bm{x}$; by this, we mean that the pattern $\bm{z}=\{z_{i,j}\colon 1\leq i\leq n, \, 1\leq j\leq \ceil{i/2}\}$ satisfies the interlacing conditions:
\[
z_{i+1,j+1}\leq z_{i,j}\leq z_{i+1,j}
\qquad\quad \text{for} \quad 1\leq i\leq 2n-1 \, , \quad 1\leq j\leq \ceil{i/2} \, ,
\]
with the convention that $z_{i,j}:=0$ when $j>\ceil{i/2}$.
Symplectic Schur polynomials are characters of the irreducible representations of $Sp_{2n}$ (see e.g.~\cite{Sun90}) hence, by the Weyl character formula~\cite[24.18]{FH91}, they can also be written as a ratio of determinants:
\begin{align}\label{eq:SpWeylChar}
\sp_{\bm{\mu}}(\bm{y})
= \frac{\det\left(y_j^{\mu_i+n-i+1} - y_j^{-(\mu_i+n-i+1)}\right)_{1\leq i,j\leq n}}
{\det\left(y_j^{n-i+1} - y_j^{-(n-i+1)}\right)_{1\leq i,j\leq n}} \, .
\end{align}
The denominator can be expressed more explicitly~\cite[24.17]{FH91}:
\[
\det\left(y_j^{n-i+1} - y_j^{-(n-i+1)}\right)_{1\leq i,j\leq n}
= \prod_{1\leq i<j\leq n}
(y_i - y_j)(y_i y_j - 1)
\prod_{i=1}^n (y_i^2-1)y_i^{-n} \, .
\]

We first prove a formula for the rescaled limit of orthogonal Whittaker functions: the proof relies on the fact that these can be approximated by symplectic Schur polynomials, which can in turn be expressed as a ratio of determinants by the Weyl character formula.
\begin{proposition}
\label{prop:soWhittakerFnScaling}
Let $\bm{\alpha},\bm{x}\in\R^n$ and let $\GT_{2n}^{\llrighttriangle[0.15]}(\bm{x})$ be the set of symplectic Gelfand-Tsetlin patterns with shape $\bm{x}$.
We then have that
\begin{align}
\label{eq:SOWhittakerRescaling}
\lim_{\epsilon\downarrow 0}
\epsilon^{n^2} 
\Psi_{\epsilon\bm{\alpha}}^{\mathfrak{so}_{2n+1}}\big(e^{x_1/\epsilon},\dots,e^{x_n/\epsilon}\big)
&= \sp^{\rm cont}_{\bm{\ga}}(\bm{x})
\1_{\{0\leq x_n\leq \dots \leq x_1\}} \, ,
\end{align}
where
\begin{equation}
\label{eq:contSp}
\begin{split}
\sp^{\rm cont}_{\bm{\ga}}(\bm{x})
&:= \int_{\GT_{2n}^{\llrighttriangle[0.15]}(\bm{x})}
\prod_{k=1}^n
e^{ \alpha_k \left(2\abs{\bm{z}_{2k-1}}
- \abs{\bm{z}_{2k-2}} - \abs{\bm{z}_{2k}}\right)}
\!\!\!
\prod_{\substack{1\leq i<2n \\ 1\leq j\leq \ceil{i/2}}}
\!\!\!\!\!
\diff z_{i,j}
\end{split}
\end{equation}
is a continuum version of the symplectic Schur function, and $\abs{\bm{z}_i} :=\sum_{j=1}^{\ceil{i/2}} z_{i,j}$.
Moreover, $\sp^{\rm cont}_{\bm{\ga}}$ has a determinantal form:
\begin{equation}
\label{eq:contSp2}
\sp^{\rm cont}_{\bm{\ga}}(\bm{x})
= \frac{\det\big(e^{\alpha_j x_i} - e^{-\alpha_j x_i}\big)_{1\leq i,j\leq n}}{\prod_{1\leq i<j\leq n}(\alpha_i-\alpha_j) \prod_{1\leq i\leq j\leq n}(\alpha_i + \alpha_j)} \, .
\end{equation}
\end{proposition}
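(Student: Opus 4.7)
I would prove the two identities separately: the integral representation~\eqref{eq:SOWhittakerRescaling} via a logarithmic Laplace asymptotic on the Givental integral~\eqref{eq:soWhittakerFn}, and the determinantal formula~\eqref{eq:contSp2} by taking the same scaling limit inside the symplectic Weyl character formula~\eqref{eq:SpWeylChar}.

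For the first identity, I would perform the change of variables $z_{i,j}=e^{y_{i,j}/\epsilon}$ in~\eqref{eq:soWhittakerFn} with parameter $\epsilon\bm{\alpha}$ and shape $e^{\bm{x}/\epsilon}$. Since a depth-$2n$ half-triangular array has $\sum_{i=1}^{2n-1}\lceil i/2\rceil=n^2$ internal entries, the Jacobian contributes $\epsilon^{-n^2}$, which cancels the prefactor $\epsilon^{n^2}$ on the left of~\eqref{eq:SOWhittakerRescaling}. Under this substitution the type factor rewrites as $\prod_{k=1}^n\exp\bigl(\alpha_k(2|\bm{y}_{2k-1}|-|\bm{y}_{2k-2}|-|\bm{y}_{2k}|)\bigr)$, exactly the integrand of $\sp^{\mathrm{cont}}_{\bm{\alpha}}$ in~\eqref{eq:contSp}. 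The energy $\exp(-\mathcal{E}^{\llrighttriangle[0.15]}(\bm{z}))$ becomes a product of factors $\exp(-e^{(y_{i+1,j+1}-y_{i,j})/\epsilon})$, $\exp(-e^{(y_{i,j}-y_{i+1,j})/\epsilon})$ together with the boundary contributions $\exp(-e^{-y_{2k-1,k}/\epsilon})$ coming from the convention $z_{2k,k+1}=1$, each of which converges pointwise to the indicator of the corresponding interlacing inequality $y_{i+1,j+1}\leq y_{i,j}$, $y_{i,j}\leq y_{i+1,j}$, resp.\ $y_{2k-1,k}\geq 0$. Their product therefore collapses in the limit to $\1_{\bm{y}\in\GT_{2n}^{\llrighttriangle[0.15]}(\bm{x})}$, and propagating the interlacings down to the bottom row in particular forces $0\leq x_n\leq\cdots\leq x_1$. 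To upgrade pointwise convergence to convergence of integrals, I would split the domain into the (compact) closure of $\GT_{2n}^{\llrighttriangle[0.15]}(\bm{x})$, where all double-exponential factors are bounded by $1$ and the type factor is continuous, and its complement, where at least one exponent is positive and the super-exponential decay $\exp(-e^{u/\epsilon})$ dominates the exponential growth of the type factor; a standard dominated-convergence argument then yields~\eqref{eq:SOWhittakerRescaling}.

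For the determinantal formula I would set $\mu_i:=\lfloor x_i/\epsilon\rfloor$ and $y_j:=e^{\epsilon\alpha_j}$ in~\eqref{eq:SpWeylChar}. A Riemann-sum comparison with the discrete symplectic GT-pattern definition of $\sp_{\bm{\mu}}$ gives $\epsilon^{n^2}\sp_{\bm{\mu}}(\bm{y})\to\sp^{\mathrm{cont}}_{\bm{\alpha}}(\bm{x})$. On the other hand the numerator in~\eqref{eq:SpWeylChar} converges entrywise to $e^{\alpha_j x_i}-e^{-\alpha_j x_i}$, while the explicit product formula displayed just below~\eqref{eq:SpWeylChar} gives
\[
\prod_{i<j}(e^{\epsilon\alpha_i}-e^{\epsilon\alpha_j})(e^{\epsilon(\alpha_i+\alpha_j)}-1)\prod_i(e^{2\epsilon\alpha_i}-1)e^{-n\epsilon\alpha_i}=\epsilon^{n^2}\prod_i(2\alpha_i)\!\!\prod_{1\leq i<j\leq n}\!\!(\alpha_i^2-\alpha_j^2)\bigl(1+O(\epsilon)\bigr),
\]
the total $\epsilon$-power $2\binom{n}{2}+n=n^2$ matching the rescaling above. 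Equating the two computations and rewriting $\prod_i(2\alpha_i)\prod_{i<j}(\alpha_i^2-\alpha_j^2)=\prod_{i<j}(\alpha_i-\alpha_j)\prod_{i\leq j}(\alpha_i+\alpha_j)$ yields~\eqref{eq:contSp2}.

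The main obstacle is the dominated-convergence step in the Laplace analysis: the new variables range over all of $\R^{n^2}$ and the type factor is exponential in them, so one must balance this against the super-exponential decay $\exp(-e^{u/\epsilon})$ of the energy on the ``forbidden'' side of each interlacing inequality. Partitioning the complement of $\GT_{2n}^{\llrighttriangle[0.15]}(\bm{x})$ by which inequality is most violated and using crude bounds such as $\exp(-e^{u/\epsilon})\leq \exp(-u/\epsilon)$ for $u$ uniformly bounded away from $0$ produces an integrable majorant uniformly in $\epsilon$; this is the only non-routine step.
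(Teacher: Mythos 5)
Your proposal follows essentially the same route as the paper: the change of variables $z_{i,j}\mapsto e^{z_{i,j}/\epsilon}$ in the Givental integral with the $\epsilon^{-n^2}$ Jacobian, the pointwise collapse of the double-exponential energy factors to the interlacing indicators, and then a Riemann-sum comparison with discrete symplectic Schur polynomials combined with the Weyl character formula~\eqref{eq:SpWeylChar} for the determinantal form. The only difference is that you spell out the dominated-convergence majorant, which the paper simply asserts, and you suppress the harmless correction factor $e^{-\alpha_n\abs{\bm{x}}+\delta\alpha_n\abs{\floor{\bm{x}/\delta}}}\to 1$ in the Riemann-sum step; neither affects correctness.
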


\begin{remark}
{ \rm
When $\alpha_i=\alpha_j$ for some $i,j$, \eqref{eq:contSp2} is still valid in the limit as $\alpha_i-\alpha_j \to 0$. In particular, when all $\alpha_i$'s are equal to a given $\alpha$, we have that
\[
\sp^{\rm cont}_{\bm{\ga}}(\bm{x})
= (-1)^{n(n-1)/2}
\frac{\det\big(x_i^{j-1} (e^{\alpha x_i} +(-1)^j e^{-\alpha x_i}) \big)_{1\leq i,j\leq n}}
{(2\alpha)^{n(n+1)/2} \prod_{j=1}^n (j-1)!} \, .
\]
This formula is an immediate consequence of the following fact: if the functions $f_1,\dots,f_n$ are differentiable $n-1$ times at $\alpha$, then
\[
\frac{\det(f_i(\alpha_j))_{\leq i,j\leq n}}{\prod_{1\leq i<j\leq n} (\alpha_j - \alpha_i)}
\longrightarrow
\frac{W(f_1,\dots,f_n)(\alpha)}
{\prod_{j=1}^n (j-1)!} \quad \text{as } \alpha_1,\dots,\alpha_n \to \alpha \, ,
\]
where $W(f_1,\dots,f_n)(\alpha) = \det\big(f_i^{(j-1)}(\alpha)\big)_{1\leq i,j\leq n}$ is the Wronskian at $\alpha$.
}
\end{remark}

\begin{proof}
In Definition~\ref{def:soWhittakerFn}, we change variables by setting $z_{i,j}\mapsto e^{z_{i,j}/\epsilon}$ for all $1\leq i< 2n$ and $1\leq j\leq \ceil{i/2}$ and obtain
\[
\begin{split}
&\epsilon^{n^2} 
\Psi_{\epsilon\bm{\alpha}}^{\mathfrak{so}_{2n+1}}\big(e^{x_1/\epsilon},\dots,e^{x_n/\epsilon}\big) 
=\int_{\mathcal{T}_{2n}^{\llrighttriangle[0.15]}(\bm{x})}
\prod_{k=1}^n
\exp\left( \frac{2\abs{\bm{z}_{2k-1}}}{\epsilon}
- \frac{\abs{\bm{z}_{2k-2}}}{\epsilon} - \frac{\abs{\bm{z}_{2k}}}{\epsilon} \right)^{\epsilon \alpha_k} \\
&\qquad\qquad\times
\prod_{i=1}^{2n-1} \prod_{j=1}^{\ceil{i/2}}
\exp\big(-e^{(z_{i+1,j+1}-z_{i,j})/\epsilon}\big)
\exp\big(-e^{(z_{i,j}-z_{i+1,j})/\epsilon}\big)
\!\!\!
\prod_{\substack{1\leq i<2n \\ 1\leq j\leq \ceil{i/2}}}
\!\!\!\!\!
\diff z_{i,j} \, ,
\end{split}
\]
with the convention that $z_{i,j}:=0$ when $j>\ceil{i/2}$.
Since $\exp\big(-e^{(a-b)/\epsilon}\big) \xrightarrow{\epsilon\downarrow 0} \1_{\{a\leq b\}}$ for all $a\neq b$, we then have
\[
\prod_{i=1}^{2n-1} \prod_{j=1}^{\ceil{i/2}}
\exp\big(-e^{(z_{i+1,j+1}-z_{i,j})/\epsilon}\big)
\exp\big(-e^{(z_{i,j}-z_{i+1,j})/\epsilon}\big)
\xrightarrow{\epsilon\downarrow 0}
\1_{\GT_{2n}^{\llrighttriangle[0.15]}(\bm{x})}(\bm{z})
\1_{\{0\leq x_n\leq \dots\leq x_1\}}
\]
for a.e.\ $\bm{z}\in \mathcal{T}_{2n}^{\llrighttriangle[0.15]}(\bm{x})$. By dominated convergence, we thus obtain
\begin{align*}
&\lim_{\epsilon\downarrow 0} \epsilon^{n^2} 
\Psi_{\epsilon\bm{\alpha}}^{\mathfrak{so}_{2n+1}}\big(e^{x_1/\epsilon},\dots,e^{x_n/\epsilon}\big) \\
=~ &\1_{\{0\leq x_n\leq \dots\leq x_1\}}
\int_{\GT_{2n}^{\llrighttriangle[0.15]}(\bm{x})}
\prod_{k=1}^n
e^{ \alpha_k \left(2\abs{\bm{z}_{2k-1}}
- \abs{\bm{z}_{2k-2}} - \abs{\bm{z}_{2k}}\right)}
\!\!\!
\prod_{\substack{1\leq i<2n \\ 1\leq j\leq \ceil{i/2}}}
\!\!\!\!\!
\diff z_{i,j} \, .
\end{align*}
The latter integral is equal to the function $\sp^{\rm cont}_{\bm{\ga}}(\bm{x})$ defined in~\eqref{eq:contSp}. 
By Riemann sum approximation, we can rewrite it as
\[
\begin{split}
\sp^{\rm cont}_{\bm{\ga}}(\bm{x})
&= \lim_{\delta\downarrow 0}
\!\!\!\!
\sum_{\substack{\bm{z}\in \Z^{n^2} \\ \cap \GT_{2n}^{\llrighttriangle[0.15]}\left(\floor{\bm{x}/\delta}\right)}}
\!\!\!\!\!\!\!\!
\delta^{n^2}
\prod_{k=1}^{n-1} e^{ \delta\alpha_k \left(2\abs{\bm{z}_{2k-1}}
- \abs{\bm{z}_{2k-2}} - \abs{\bm{z}_{2k}}\right)}
e^{ \delta\alpha_n \left(2\abs{\bm{z}_{2n-1}}
- \abs{\bm{z}_{2n-2}}\right)
- \alpha_n\abs{\bm{x}} } \\
&= \lim_{\delta\downarrow 0}
\delta^{n^2}
e^{ -\alpha_n \abs{\bm{x}} + \delta\alpha_n \abs{\floor{\bm{x}/\delta}} }
\sp_{\floor{\bm{x}/\delta}}\big(e^{\delta\alpha_1},\dots,e^{\delta\alpha_n}\big) \, ,
\end{split}
\]
where $\sp_{\floor{\bm{x}/\delta}}$ is the symplectic Schur polynomial with shape $\floor{\bm{x}/\delta}$. 
From the Weyl character formula~\eqref{eq:SpWeylChar} for $Sp_{2n}$ we have:
\[
\begin{split}
\sp_{\floor{\bm{x}/\delta}}\big(e^{\delta\alpha_1},\dots,e^{\delta\alpha_n}\big)
&= \frac{\det\left(e^{\delta \alpha_j\left(\floor{x_i/\delta} + n-i+1\right)} - e^{-\delta \alpha_j\left(\floor{x_i/\delta} + n-i+1\right)}\right)_{1\leq i,j\leq n}}
{\prod_{1\leq i<j\leq n} \big(e^{\delta \alpha_i} - e^{\delta \alpha_j}\big) \big(e^{\delta (\alpha_i+\alpha_j)}-1\big)
\prod_{i=1}^n \big(e^{2\delta \alpha_i}-1\big)
e^{-\delta\alpha_i n}} \\
&\asymptotic{\delta \downarrow 0} \frac{\det\big(e^{\alpha_j x_i} - e^{-\alpha_j x_i}\big)_{1\leq i,j\leq n}}{\delta^{n^2} \prod_{1\leq i<j\leq n}(\alpha_i-\alpha_j)
(\alpha_i+\alpha_j) \prod_{i=1}^n (2\alpha_i)} \, .
\end{split}
\]
Noting finally that $e^{ -\alpha_n \abs{\bm{x}} + \delta\alpha_n \abs{\floor{\bm{x}/\delta}} } \xrightarrow{\delta\downarrow 0} 1$, \eqref{eq:contSp2} follows.
\end{proof}

\begin{theorem}
\label{thm:flatLPP}
Let $\fTau_{\,2n}$ be the point-to-line last passage percolation with exponentially distributed waiting times as in~\eqref{eq:expDistribution}. Then, for all $u>0$:
\begin{align}
\label{eq:flatLPP1}
\P(\fTau_{2n}\leq u)
= \frac{H_{\bm{\ga},\bm{\gb}}}{e^{u \sum_{k=1}^n( \alpha_k+\beta_k)}}
\int_{\{0\leq x_n\leq \cdots \leq x_1\leq u\}}  
\sp^{\rm cont}_{\bm{\ga}}(\bm{x})  \sp^{\rm cont}_{\bm{\gb}}(\bm{x}) \prod_{i=1}^n \diff x_i \, ,
\end{align}
where the function $\sp^{\rm cont}_{\bm{\ga}}$ is defined in \eqref{eq:contSp}, and
\begin{align*}
H_{\bm{\ga},\bm{\gb}}
:=\prod_{1\leq i\leq j\leq n} \!\! (\alpha_i + \alpha_j)(\beta_i + \beta_j)
\prod_{1\leq i,j\leq n} \! (\alpha_i + \beta_j)
\end{align*}
 is a normalizing factor. 
We can further write \eqref{eq:flatLPP1} in a determinantal form as
\begin{equation}
\label{eq:flatLPP2}
\P(\fTau_{2n}\leq u) 
= \frac{1}{C_{\bm{\alpha},\bm{\beta}}}
\det\left( e^{-u ( \alpha_i+\beta_j)}
\int_0^u
\big(e^{\alpha_i x} - e^{-\alpha_i x}\big)
\big(e^{\beta_j x} - e^{-\beta_j x}\big) \diff x
\right)_{1\leq i,j\leq n} \, ,
\end{equation}
where $C_{\bm{\alpha},\bm{\beta}}$ is a Cauchy's determinant:
\begin{equation}
\label{eq:CauchyDet}
C_{\bm{\alpha},\bm{\beta}}
:= \det\left( \frac{1}{\alpha_i + \beta_j} \right)_{1\leq i,j\leq n}
= \frac{\prod_{1\leq i<j\leq n} (\alpha_i -\alpha_j)(\beta_i - \beta_j)}
{\prod_{1\leq i,j\leq n} (\alpha_i + \beta_j)} \, .
\end{equation}
\end{theorem}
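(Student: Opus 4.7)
The plan is to derive \eqref{eq:flatLPP1} as a zero-temperature limit of the Whittaker integral formula \eqref{eq:P2LLaplaceTransf}, and then deduce \eqref{eq:flatLPP2} via Andr\'eief's identity applied to the determinantal representation \eqref{eq:contSp2} of $\sp^{\rm cont}$.

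Starting from Theorem~\ref{thm:P2LLaplaceTransf} applied to an $(\epsilon\bm{\alpha},\epsilon\bm{\beta},0)$-log-gamma polymer and with $r=e^{-u/\epsilon}$, Proposition~\ref{prop:zeroTempLimit}\ref{prop:zeroTempLimit_LaplaceTransf} gives $\P(\fTau_{2n}\leq u)$ as the $\epsilon\downarrow 0$ limit of the right-hand side of \eqref{eq:P2LLaplaceTransf}. I would then change variables $x_i = e^{y_i/\epsilon}$ (so $\dd x_i/x_i = \epsilon^{-1} \dd y_i$) and write the integrand as
\[
e^{-e^{(y_1-u)/\epsilon}} \cdot \Big[\epsilon^{n^2}\Psi_{\epsilon\bm{\alpha}}^{\son_{2n+1}}(e^{\bm{y}/\epsilon})\Big] \cdot \Big[\epsilon^{n^2}\Psi_{\epsilon\bm{\beta}}^{\son_{2n+1}}(e^{\bm{y}/\epsilon})\Big] \cdot \epsilon^{-2n^2-n} \prod_{i=1}^n \dd y_i.
\]
The prefactor $e^{-e^{(y_1-u)/\epsilon}}$ converges pointwise to $\1_{\{y_1<u\}}$, and Proposition~\ref{prop:soWhittakerFnScaling} handles each rescaled Whittaker function, producing $\sp^{\rm cont}_{\bm{\alpha}}(\bm{y})\sp^{\rm cont}_{\bm{\beta}}(\bm{y})\1_{\{0\leq y_n\leq\cdots\leq y_1\}}$ in the limit.

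Next I would track the prefactor $r^{\sum_k(\alpha_k+\beta_k+\gamma)}/\fG_{\bm{\alpha},\bm{\beta},\gamma}$ under the scaling. Here $r^{\epsilon\sum_k(\alpha_k+\beta_k)}=e^{-u\sum_k(\alpha_k+\beta_k)}$, while using $\Gamma(\epsilon s)\sim 1/(\epsilon s)$ as $\epsilon\downarrow 0$ applied to each Gamma factor in \eqref{eq:P2Lnormalization} gives $\fG_{\epsilon\bm{\alpha},\epsilon\bm{\beta},0}\sim \epsilon^{-(2n^2+n)}/H_{\bm{\alpha},\bm{\beta}}$. The powers of $\epsilon$ then precisely cancel, yielding
\[
\P(\fTau_{2n}\leq u) = H_{\bm{\alpha},\bm{\beta}}\,e^{-u\sum_k(\alpha_k+\beta_k)} \!\!\int_{\{0\leq y_n\leq\cdots\leq y_1\leq u\}}\!\!\! \sp^{\rm cont}_{\bm{\alpha}}(\bm{y})\sp^{\rm cont}_{\bm{\beta}}(\bm{y})\prod_{i=1}^n \dd y_i,
\]
which is \eqref{eq:flatLPP1}. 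The main obstacle is justifying the interchange of limit and integral: I would seek a $y$-integrable, $\epsilon$-uniform upper bound for the integrand, exploiting monotonicity of $e^{-e^{(y_1-u)/\epsilon}}$ in $y_1$ together with explicit estimates on $\Psi^{\son_{2n+1}}$ at large arguments coming from its recursive integral representation (or alternatively from the Gelfand-Tsetlin-pattern integral, bounding ratios of exponentials along diagonals). Once this uniform control is in place, dominated convergence closes the argument.

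For \eqref{eq:flatLPP2}, I would substitute the determinantal formula \eqref{eq:contSp2} for both $\sp^{\rm cont}_{\bm{\alpha}}$ and $\sp^{\rm cont}_{\bm{\beta}}$, obtaining a product of two $n\times n$ determinants against the ordered-simplex measure. Andr\'eief's identity then gives
\[
\int_{\{0\leq y_n\leq\cdots\leq y_1\leq u\}}\!\!\!\det\!\big(f^{\alpha}_j(y_i)\big)\det\!\big(f^{\beta}_j(y_i)\big)\prod_i \dd y_i = \det\!\Big(\int_0^u f^{\alpha}_i(x)f^{\beta}_j(x)\dd x\Big)_{i,j},
\]
with $f^{\alpha}_j(x):=e^{\alpha_j x}-e^{-\alpha_j x}$. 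Collecting the Vandermonde-type denominators from \eqref{eq:contSp2}, one checks that $H_{\bm{\alpha},\bm{\beta}}$ divided by the product of these denominators equals $1/C_{\bm{\alpha},\bm{\beta}}$ by the Cauchy determinant formula \eqref{eq:CauchyDet}, while the exponential factor $e^{-u\sum_k(\alpha_k+\beta_k)}$ is absorbed into the determinant row-by-row and column-by-column, producing precisely \eqref{eq:flatLPP2}.
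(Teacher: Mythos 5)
Your proposal is correct and follows essentially the same route as the paper's proof: zero-temperature limit of \eqref{eq:P2LLaplaceTransf} via the change of variables $x_i=e^{y_i/\epsilon}$, Proposition~\ref{prop:soWhittakerFnScaling}, the Gamma-function asymptotics $\Gamma(\epsilon s)\sim 1/(\epsilon s)$ for the normalization, and then the Andr\'eief/Cauchy--Binet identity together with the Cauchy determinant to pass from \eqref{eq:flatLPP1} to \eqref{eq:flatLPP2}. The only (welcome) difference is that you explicitly flag the dominated-convergence justification for interchanging the $\epsilon\downarrow 0$ limit with the outer integral, a point the paper passes over.
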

\begin{proof}
By Proposition~\ref{prop:zeroTempLimit}, we just need to compute $\lim_{\epsilon \downarrow 0} \E\left[\exp\left(e^{-u/\epsilon}Z_{2n}^{(\epsilon)}\right)\right]$, where $Z_{2n}^{(\epsilon)}$ is the point-to-line 
$(\epsilon \bm{\alpha},\epsilon \bm{\beta},0)$-log-gamma polymer partition function.
Formula~\eqref{eq:P2LLaplaceTransf} with $\gamma=0$ yields:
\[
\E\left[\exp\left(-e^{-u/\epsilon}Z_{2n}^{(\epsilon)}\right)\right] 
= \frac{e^{-u \sum_{k=1}^{n} (\alpha_k + \beta_k) }}
{\fG_{\epsilon\bm{\alpha},\epsilon\bm{\beta},0}}
\int_{\R_{+}^n}
e^{-e^{-u/\epsilon} x_1}
\Psi_{\epsilon\bm{\alpha}}^{\mathfrak{so}_{2n+1}} (\bm{x})
\Psi_{\epsilon\bm{\beta}}^{\mathfrak{so}_{2n+1}} (\bm{x})
\prod_{i=1}^n \frac{\diff x_i}{x_i} \, .
\]
The integral can be rewritten, changing variables $x_i \mapsto e^{x_i/\epsilon}$ for $1\leq i\leq n$, as follows:
\[
\begin{split}
&\quad\, \int_{\R_{+}^n}
e^{-e^{-u/\epsilon} x_1}
\Psi_{\epsilon\bm{\alpha}}^{\mathfrak{so}_{2n+1}} (\bm{x})
\Psi_{\epsilon\bm{\beta}}^{\mathfrak{so}_{2n+1}} (\bm{x})
\prod_{i=1}^n \frac{\diff x_i}{x_i} \\
&= \epsilon^{-2n^2}
\int_{\R^n} e^{-e^{(x_1-u)/\epsilon}}
\epsilon^{n^2} \Psi_{\epsilon\bm{\alpha}}^{\mathfrak{so}_{2n+1}} \left(e^{x_1/\epsilon},\dots,e^{x_n/\epsilon}\right)
\epsilon^{n^2} \Psi_{\epsilon\bm{\beta}}^{\mathfrak{so}_{2n+1}} \left(e^{x_1/\epsilon},\dots,e^{x_n/\epsilon}\right)
\prod_{i=1}^n \frac{\diff x_i}{\epsilon} \\
&\asymptotic{\epsilon\downarrow 0}
\epsilon^{-2n^2-n}
\int_{\{0\leq x_n\leq \cdots \leq x_1\leq u\}}  
\sp^{\rm cont}_{\bm{\ga}}(\bm{x})  \sp^{\rm cont}_{\bm{\gb}}(\bm{x})
\prod_{i=1}^n \diff x_i \, ,
\end{split}
\]
where the asymptotics follow from Proposition~\ref{prop:soWhittakerFnScaling} and the fact that
$e^{-e^{(x_1-u)/\epsilon}} \xrightarrow{\epsilon \downarrow 0} \1_{\{x_1\leq u\}}$ for all $x_1\neq u$.
On the other hand, using the definition of $\fG_{\epsilon\bm{\alpha},\epsilon\bm{\beta},0}$ given in~\eqref{eq:P2Lnormalization} and the asymptotics of the Gamma function near $0$, we have:
\[
\begin{split}
\fG_{\epsilon\bm{\alpha},\epsilon\bm{\beta},0}
&= \prod_{1\leq i\leq j\leq n} \!\!
\Gamma (\epsilon(\alpha_i + \alpha_j) )
\Gamma (\epsilon(\beta_i + \beta_j) )
\prod_{1\leq i,j\leq n} \!\!
\Gamma\left(\epsilon(\alpha_i + \beta_j)\right) \\
&\asymptotic{\epsilon\downarrow 0}
\epsilon^{-2n^2-n} \!\!
\prod_{1\leq i\leq j\leq n} 
\frac{1}{(\alpha_i + \alpha_j)(\beta_i + \beta_j)}
\prod_{1\leq i,j\leq n} 
\frac{1}{\alpha_i + \beta_j} \, .
\end{split}
\]
Thus, \eqref{eq:flatLPP1} easily follows from the combination of the foregoing formulae.

We now further elaborate~\eqref{eq:flatLPP1} by making use of the determinantal formula~\eqref{eq:contSp2}:
\[
\begin{split}
&\quad\, e^{u \sum_{k=1}^n (\alpha_k + \beta_k)} \P(\fTau_{2n} \leq u) \\
&= \int_{\{0\leq x_n\leq \cdots \leq x_1\leq u\}}
\frac{H_{\bm{\ga},\bm{\gb}} \, \det\big(e^{\alpha_j x_i} - e^{-\alpha_j x_i}\big)_{1\leq i,j\leq n}
\det\big(e^{\beta_j x_i} - e^{-\beta_j x_i}\big)_{1\leq i,j\leq n}}
{\prod_{1\leq i<j\leq n}(\alpha_i-\alpha_j) (\beta_i - \beta_j) \prod_{1\leq i\leq j\leq n}(\alpha_i + \alpha_j) (\beta_i + \beta_j)}
\prod_{i=1}^n \diff x_i \\
&= \frac{1}{C_{\bm{\alpha},\bm{\beta}}} \frac{1}{n!}
\int_{[0,u]^n}
\det\left(e^{\alpha_j x_i} - e^{-\alpha_j x_i}\right)_{1\leq i,j\leq n}
\det\left(e^{\beta_j x_i} - e^{-\beta_j x_i}\right)_{1\leq i,j\leq n}
\prod_{i=1}^n \diff x_i \\
&= \frac{1}{C_{\bm{\alpha},\bm{\beta}}}
\det\left(\int_0^u
\left(e^{\alpha_i x} - e^{-\alpha_i x}\right)
\left(e^{\beta_j x} - e^{-\beta_j x}\right) \diff x
\right)_{1\leq i,j\leq n} \, .
\end{split}
\]
In the latter computation, we have used: the fact that, by the alternating property of the determinant, the integral over $\{0\leq x_n \leq \dots \leq x_1\leq u\}$ is invariant by applying any permutation to the variables $x_i$'s; the definition of $H_{\bm{\ga},\bm{\gb}}$ and $C_{\bm{\alpha},\bm{\beta}}$; and the Cauchy-Binet identity (see e.g.~\cite[ch.~3]{For10}). We now use the multilinearity of the determinant to finally obtain~\eqref{eq:flatLPP2}.
\end{proof}

\subsection{The point-to-half-line last passage percolation}

For the half-flat and restricted half-flat cases, we just state the final formulae and briefly outline their proofs.

Similarly to~\eqref{eq:SOWhittakerRescaling}, properly rescaling a $GL_n(\R)$-Whittaker function yields the continuum version of a classical Schur function, which, thanks to the Weyl character formula for $GL_n$, can also be written in a determinantal form:
\begin{equation}
\label{eq:contS}
\schur^{\rm cont}_{\bm{\beta}}(\bm{x})
= \frac{\det(e^{\beta_j x_i})_{1\leq i,j\leq n}}{\prod_{1\leq i<j\leq n} (\beta_i - \beta_j)} \, .
\end{equation}
Again, when $\beta_i = \beta_j$ for some $i,j$, the latter formula should be viewed in the limit as $\beta_i-\beta_j \to 0$.

Following the same steps as in the flat case, one can express the law of $\hTau_{2n}$, with exponentially distributed waiting times, in terms of an integral of (the continuum version of) a symplectic Schur function and a classical Schur function.
Using the Cauchy-Binet identity in the same fashion as in Theorem~\ref{thm:flatLPP}, one finally obtains:
\begin{theorem}
\label{thm:hFlatLPP}
Let $\hTau_{\,2n}$ be the point-to-half-line last passage percolation with exponentially distributed waiting times as in~\eqref{eq:expDistribution}. Then, for all $u>0$:
\begin{align}
\label{eq:flatLPP1_new}
\P(\hTau_{2n} \leq u)
= \frac{H^{\hFlat}_{\bm{\ga},\bm{\gb}}}{e^{u \sum_{k=1}^n( \alpha_k+\beta_k)}}
\int_{\{0\leq x_n\leq \cdots \leq x_1\leq u\}}  
\sp^{\rm cont}_{\bm{\alpha}}(\bm{x})  \schur^{\rm cont}_{\bm{\beta}}(\bm{x}) \prod_{i=1}^n \diff x_i \, ,
\end{align}
where the functions $\sp^{\rm cont}_{\bm{\alpha}}$ and $\schur^{\rm cont}_{\bm{\beta}}$ are defined in~\eqref{eq:contSp} and~\eqref{eq:contS}, and
\begin{align*}
H^{\hFlat}_{\bm{\ga},\bm{\gb}}
:=\prod_{1\leq i\leq j\leq n} \!\! (\alpha_i + \alpha_j)
\prod_{1\leq i,j\leq n} \! (\alpha_i + \beta_j)
\end{align*}
is a normalizing factor. 
We can further write \eqref{eq:flatLPP1_new} in a determinantal form as
\begin{equation}
\label{eq:hFlatLPP}
\P(\hTau_{2n}\leq u) 
= \frac{1}{C_{\bm{\alpha},\bm{\beta}}}
\det\left( e^{-u ( \alpha_i+\beta_j)}
\int_0^u
\big(e^{\alpha_i x} - e^{-\alpha_i x}\big)
\big(e^{\beta_j x}\big) \diff x
\right)_{1\leq i,j\leq n} \, ,
\end{equation}
where $C_{\bm{\alpha},\bm{\beta}}$ is the Cauchy's determinant defined in~\eqref{eq:CauchyDet}.
\end{theorem}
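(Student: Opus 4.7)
The plan is to mirror closely the proof of Theorem \ref{thm:flatLPP}, replacing the starting Whittaker integral formula with \eqref{eq:P2HLLaplaceTransf} and one of the two $\son_{2n+1}$-Whittaker factors with a $\gln_n$-Whittaker factor. By Proposition \ref{prop:zeroTempLimit}(ii), it suffices to compute $\lim_{\epsilon\downarrow 0}\E[\exp(-e^{-u/\epsilon}\hZ_{2n}^{(\epsilon)})]$, where $\hZ_{2n}^{(\epsilon)}$ is the point-to-half-line partition function of the half-flat $(\epsilon\bm{\alpha},\epsilon\bm{\beta})$-log-gamma polymer. Substituting into \eqref{eq:P2HLLaplaceTransf} with $r=e^{-u/\epsilon}$ and changing variables $x_i\mapsto e^{x_i/\epsilon}$ gives, after pulling out the prefactor $e^{-u\sum_k(\alpha_k+\beta_k)}$,
\[
\E\big[e^{-e^{-u/\epsilon}\hZ_{2n}^{(\epsilon)}}\big]
= \frac{e^{-u\sum_k(\alpha_k+\beta_k)}}{\hG_{\epsilon\bm{\alpha},\epsilon\bm{\beta}}}\,\epsilon^{-2n^2}
\!\!\int_{\R^n}\! e^{-e^{(x_1-u)/\epsilon}}
\epsilon^{n^2}\Psi^{\son_{2n+1}}_{\epsilon\bm{\alpha}}(e^{\bx/\epsilon})\,
\epsilon^{n^2}\Psi^{\gln_n}_{\epsilon\bm{\beta}}(e^{\bx/\epsilon})
\prod_{i=1}^n\frac{\diff x_i}{\epsilon}.
\]

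The main technical input is a $\gln_n$-analogue of Proposition \ref{prop:soWhittakerFnScaling}: one shows by the same dominated convergence argument on Definition \ref{def:glWhittakerFn} (using $\exp(-e^{(a-b)/\epsilon})\to \1_{\{a\leq b\}}$ on the exponentiated ratios in $\cE^{\triangle}$) that $\epsilon^{n(n-1)/2}\Psi^{\gln_n}_{\epsilon\bm{\beta}}(e^{\bx/\epsilon})$ converges, on the chamber $\{x_n\leq\dots\leq x_1\}$, to a continuum Schur function $\schur^{\rm cont}_{\bm{\beta}}(\bx)$; Riemann-sum approximation together with the Weyl character formula for $GL_n$ identifies it with the determinantal expression \eqref{eq:contS}. (The powers of $\epsilon$ to factor out on the $\gln_n$-side are $n(n-1)/2$ rather than $n^2$, reflecting the smaller number of free entries in a triangular array of depth $n$ versus a symplectic half-triangular array of depth $2n$; one must track these carefully to get the total $\epsilon$ scaling right.) Combined with Proposition \ref{prop:soWhittakerFnScaling} and $e^{-e^{(x_1-u)/\epsilon}}\to\1_{\{x_1\leq u\}}$, the integral above is asymptotic, as $\epsilon\downarrow 0$, to $\epsilon^{-n^2-n(n-1)/2-n}$ times
\[
\int_{\{0\leq x_n\leq\cdots\leq x_1\leq u\}}\sp^{\rm cont}_{\bm\alpha}(\bx)\,\schur^{\rm cont}_{\bm\beta}(\bx)\prod_{i=1}^n\diff x_i.
\]
For the normalizing constant, \eqref{eq:P2HLnormalization} together with $\Gamma(\epsilon t)\sim 1/(\epsilon t)$ gives
\[
\hG_{\epsilon\bm{\alpha},\epsilon\bm{\beta}}
\;\asymptotic{\epsilon\downarrow 0}\;
\epsilon^{-n^2-n(n+1)/2}
\prod_{1\leq i\leq j\leq n}\!\!\frac{1}{\alpha_i+\alpha_j}
\prod_{1\leq i,j\leq n}\!\!\frac{1}{\alpha_i+\beta_j},
\]
and since the total $\epsilon$-powers balance ($-2n^2+n^2+n(n-1)/2+n = -n^2-n(n+1)/2$) formula \eqref{eq:flatLPP1_new} drops out, with $H^{\hFlat}_{\bm{\alpha},\bm{\beta}}$ exactly reproducing the reciprocal of the above product.

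Finally, to obtain the determinantal form \eqref{eq:hFlatLPP}, I substitute the determinantal representations \eqref{eq:contSp2} for $\sp^{\rm cont}_{\bm\alpha}$ and \eqref{eq:contS} for $\schur^{\rm cont}_{\bm\beta}$ into \eqref{eq:flatLPP1_new}. The prefactors $\prod_{i<j}(\alpha_i-\alpha_j)(\beta_i-\beta_j)\prod_{i\leq j}(\alpha_i+\alpha_j)$ from the denominators combine with $H^{\hFlat}_{\bm{\alpha},\bm{\beta}}$ to produce Cauchy's determinant $C_{\bm{\alpha},\bm{\beta}}$ as in \eqref{eq:CauchyDet}. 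Extending the ordered integration over $\{0\leq x_n\leq\cdots\leq x_1\leq u\}$ to the cube $[0,u]^n$ (which costs a factor $1/n!$ by alternation of the determinants) and applying the Cauchy-Binet identity to the product of two $n\times n$ determinants of one-variable functions yields \eqref{eq:hFlatLPP}. The only subtle point is the same one encountered in the flat case, namely the coincidence of $\epsilon$-powers between numerator and normalizing constant; aside from that bookkeeping, the argument is a line-by-line transcription of the proof of Theorem \ref{thm:flatLPP}, with $\son_{2n+1}\leftrightarrow\gln_n$ substituted on the $\bm\beta$-factor throughout.
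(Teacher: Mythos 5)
Your proposal is correct and follows essentially the same route as the paper, which itself only sketches this proof by saying it repeats the argument of Theorem~\ref{thm:flatLPP} with one $\mathfrak{so}_{2n+1}$-Whittaker factor replaced by a $\mathfrak{gl}_n$ one; your supplements (the scaling exponent $n(n-1)/2$ for the $\mathfrak{gl}_n$ factor, the count of $n^2+n(n+1)/2$ Gamma factors in $\hG_{\epsilon\bm{\alpha},\epsilon\bm{\beta}}$, and the cancellation against $H^{\hFlat}_{\bm{\alpha},\bm{\beta}}$ to produce $C_{\bm{\alpha},\bm{\beta}}$) are exactly what is needed and do balance. The only blemish is the parenthetical identity ``$-2n^2+n^2+n(n-1)/2+n=-n^2-n(n+1)/2$'', which as written is false (take $n=1$) and should read $-2n^2+n^2-n(n-1)/2-n$ on the left; the exponents you state separately for the integral, namely $-n^2-n(n-1)/2-n$, and for $\hG_{\epsilon\bm{\alpha},\epsilon\bm{\beta}}$, namely $-n^2-n(n+1)/2$, are nevertheless both correct and equal, so this is a typo rather than a gap.
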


\subsection{The restricted last passage percolation}

Finally, let us consider the restricted half-flat case. Here, the waiting times are supposed to be independent and distributed as follows:
\begin{equation}
\label{eq:restrExpDistribution}
W_{i,j} \sim
\begin{cases}
{\rm Exp}(\alpha_i) &1\leq i=j \leq n \, , \\
{\rm Exp}(\alpha_i + \alpha_j) &1\leq i<j\leq n \, , \\
{\rm Exp}(\alpha_i + \alpha_{2n-j+1}) &1\leq i\leq n \, , \,\, n < j\leq 2n-i+1 \, .
\end{cases}
\end{equation}

\begin{theorem}
Let $\rTau_{\,2n}$ be the restricted point-to-half-line last passage percolation with exponentially distributed waiting times as in~\eqref{eq:restrExpDistribution}. Then, for all $u>0$:
\begin{align}
\label{eq:rFlatLPP_1}
\P(\rTau_{2n}\leq u)
= \frac{H^{\rFlat}_{\bm{\ga}}}{e^{u \sum_{k=1}^n \alpha_k}}
\int_{\{0\leq x_n\leq \cdots \leq x_1\leq u\}}  
\sp^{\rm cont}_{\bm{\ga}}(\bm{x})
\prod_{i=1}^n \diff x_i \, ,
\end{align}
where the function $\sp^{\rm cont}_{\bm{\ga}}$ is defined in \eqref{eq:contSp}, and
\begin{align*}
H^{\rFlat}_{\bm{\ga}}
:= \prod_{i=1}^n \alpha_i
\prod_{1\leq i < j\leq n} \!\! (\alpha_i + \alpha_j)
\prod_{1\leq i \leq j\leq n} \!\! (\alpha_i + \alpha_j)
\end{align*}
 is a normalizing factor. 
We can further write~\eqref{eq:rFlatLPP_1} in a Pfaffian form as
\begin{equation}
\label{eq:rFlatLPP_2}
\P(\rTau_{2n}\leq u)
= \frac{\Pf(\Phi^{(n)})}{\Pf(S^{(n)})} \, ,
\end{equation}
where matrices $\Phi^{(n)}$ and $S^{(n)}$ are skew-symmetric of order $n$ or $n+1$, according to whether $n$ is even or odd respectively, and are defined by
\[
\Phi^{(n)}_{i,j} :=
\begin{dcases}
\int_0^u \int_0^u \sign(y-x) \phi_i(x) \phi_j(y) \diff x \diff y
& \text{for } 1\leq i,j\leq n \, , \\
\int_0^u \phi_i(x) \diff x
& \text{for $1\leq i\leq n$, $j=n+1$; if $n$ is odd} \, ,
\end{dcases}
\]
having set $\phi_j(x) := \alpha_j e^{-u \alpha_j} (e^{\alpha_j x} - e^{-\alpha_j x})$ for $1\leq j\leq n$, and
\[
S^{(n)}_{i,j} :=
\begin{dcases}
\frac{\alpha_j - \alpha_i}{\alpha_j + \alpha_i}
& \text{for } 1\leq i,j\leq n \, , \\
1
&\text{for $1\leq i\leq n$, $j=n+1$; if $n$ is odd} \, .
\end{dcases}
\]
%\begin{equation}
%\label{eq:rFlatLPP_2}
%\P(\rTau_{2n}\leq u)
%=
%\begin{cases}
%\frac{1}{S_{\bm{\alpha}}^{(n)}}
%\Pf\bigg( \int_0^u \int_0^u \sign(y-x) \phi_i(x) \phi_j(y) \diff x \diff y \bigg)_{1\leq i,j\leq n}
%& \text{$n$ even} \, , \\
%\frac{1}{S_{\bm{\alpha}}^{(n)}}
%\Pf\bigg( \bigg( \int_0^u \int_0^u \sign(y-x) \phi_i(x) \phi_j(y) \diff x \diff y \bigg)_{1\leq i,j\leq n} ; \bigg(\int_0^u \phi_i(x) \diff x\bigg)_{1\leq i\leq n} \bigg)
%& \text{$n$ odd}
%\end{cases}
%\frac{\prod_{i=1}^n\alpha_i}{S_{\bm{\alpha}}}
%\Pf\left(
%e^{-u ( \alpha_i+\alpha_j)}
%\int_0^u \int_0^u
%\sign(y-x)
%\big(e^{\alpha_i x} - e^{-\alpha_i x}\big)
%\big(e^{\alpha_j y} - e^{-\alpha_j y}\big)
%\diff x \diff y
%\right)_{1\leq i,j\leq n} \, ,
%\end{equation}
\end{theorem}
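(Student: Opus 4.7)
The plan is to mirror the strategy used in Theorems~\ref{thm:flatLPP} and~\ref{thm:hFlatLPP}, starting from the Whittaker integral formula of Theorem~\ref{thm:symP2LLaplaceTransf}, and then to convert the resulting single-orthogonal-Whittaker integral into a Pfaffian via de~Bruijn's identity rather than via Cauchy--Binet.

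\textbf{Step 1: Zero-temperature limit giving \eqref{eq:rFlatLPP_1}.} By Proposition~\ref{prop:zeroTempLimit}-\ref{prop:zeroTempLimit_LaplaceTransf}, it suffices to evaluate $\lim_{\epsilon\downarrow 0}\E[\exp(-e^{-u/\epsilon}\rZ_{2n}^{(\epsilon)})]$, where $\rZ_{2n}^{(\epsilon)}$ is the partition function of the restricted $(\epsilon\bm{\alpha},0)$-log-gamma polymer. Applying~\eqref{eq:symP2LLaplaceTransf} with $\gamma=0$ and parameter rescaling $\bm{\alpha}\mapsto\epsilon\bm{\alpha}$, and then changing variables $x_i\mapsto e^{x_i/\epsilon}$, one obtains
\[
\E\!\left[\exp\!\left(-e^{-u/\epsilon}\rZ_{2n}^{(\epsilon)}\right)\right]
= \frac{e^{-u\sum_k\alpha_k}}{\rG_{\epsilon\bm{\alpha},0}\,\epsilon^{n(n+1)}}
\int_{\R^n} e^{-e^{(x_1-u)/\epsilon}}\,\epsilon^{n^2}\Psi^{\mathfrak{so}_{2n+1}}_{\epsilon\bm{\alpha}}\!\left(e^{x_1/\epsilon},\dots,e^{x_n/\epsilon}\right)\prod_{i=1}^n\diff x_i.
\]
Proposition~\ref{prop:soWhittakerFnScaling} identifies the pointwise limit of the integrand, and a dominated convergence argument (verbatim from Theorem~\ref{thm:flatLPP}) yields $\int_{\{0\leq x_n\leq\cdots\leq x_1\leq u\}}\sp^{\rm cont}_{\bm{\alpha}}(\bm{x})\prod\diff x_i$. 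Gamma-function asymptotics at zero give $\rG_{\epsilon\bm{\alpha},0}^{-1}\sim\epsilon^{\,n(n+1)}H^{\rFlat}_{\bm{\alpha}}$, so all $\epsilon$-powers cancel and~\eqref{eq:rFlatLPP_1} follows.

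\textbf{Step 2: Pfaffian form \eqref{eq:rFlatLPP_2}.} Substitute the determinantal expression~\eqref{eq:contSp2} for $\sp^{\rm cont}_{\bm{\alpha}}$ into~\eqref{eq:rFlatLPP_1}, and pull the column factors $\alpha_j e^{-u\alpha_j}$ out of $\det(e^{\alpha_j x_i}-e^{-\alpha_j x_i})$, absorbing them into $\phi_j(x):=\alpha_j e^{-u\alpha_j}(e^{\alpha_j x}-e^{-\alpha_j x})$. A direct accounting shows that $H^{\rFlat}_{\bm{\alpha}}\,e^{-u\sum\alpha_k}$, combined with the denominator of~\eqref{eq:contSp2} and the extracted column factors, simplifies to the scalar $\prod_{1\leq i<j\leq n}(\alpha_i+\alpha_j)/(\alpha_j-\alpha_i)$. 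The remaining integral $\int_{\{0\leq x_n\leq\cdots\leq x_1\leq u\}}\det(\phi_j(x_i))_{i,j=1}^n\prod\diff x_i$ is then evaluated by de~Bruijn's Pfaffian identity: after reversing the variable ordering $x_i\mapsto x_{n+1-i}$ (whose sign $(-1)^{n(n-1)/2}$ is absorbed by column reindexing in the determinant), one obtains $\Pf(\Phi^{(n)})$ for even $n$ and the analogous Pfaffian of the $(n{+}1)$-dimensional augmented matrix for odd $n$. Finally, Schur's Pfaffian identity
\[
\Pf\!\left(\frac{\alpha_j-\alpha_i}{\alpha_j+\alpha_i}\right)_{i,j=1}^n=\prod_{1\leq i<j\leq n}\frac{\alpha_j-\alpha_i}{\alpha_j+\alpha_i},
\]
supplemented for odd $n$ by its augmented version (last row/column of $1$'s), identifies the scalar prefactor as $1/\Pf(S^{(n)})$, producing~\eqref{eq:rFlatLPP_2}.

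\textbf{Main obstacle.} Step~1 transcribes the flat/half-flat arguments with only cosmetic changes, so no new ideas are needed there. The substantive work lies in Step~2: unlike the flat case, $\sp^{\rm cont}_{\bm{\alpha}}(\bm{x})$ is \emph{antisymmetric} in $\bm{x}$, so one cannot symmetrize the ordered simplex to $[0,u]^n/n!$ and invoke Cauchy--Binet; one must instead use the less symmetric de~Bruijn identity. The delicate part -- where most of the verification effort will be spent -- is the consistent treatment of the parity of $n$ in both de~Bruijn's formula and Schur's Pfaffian identity, together with the sign accounting from the order reversal, so that the ratio of combinatorial prefactors collapses precisely to $1/\Pf(S^{(n)})$.
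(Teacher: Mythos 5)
Your proposal is correct and follows essentially the same route as the paper: the zero-temperature limit of Theorem~\ref{thm:symP2LLaplaceTransf} transcribed from the flat case for~\eqref{eq:rFlatLPP_1}, then the determinantal form~\eqref{eq:contSp2} together with the Schur Pfaffian~\eqref{eq:SchurPf} to reduce to $\frac{1}{\Pf(S^{(n)})}\int_{\{0\leq x_1\leq\dots\leq x_n\leq u\}}\det(\phi_j(x_i))\prod_i\diff x_i$, and finally de~Bruijn's identity. Your sign bookkeeping is distributed slightly differently from the cleanest accounting (the scalar prefactor is $(-1)^{n(n-1)/2}/\Pf(S^{(n)})$ and the order reversal contributes a matching $(-1)^{n(n-1)/2}$, so the two cancel outright), but you correctly flag this as the point requiring care and the net result agrees.
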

The denominator $\Pf(S^{(n)})$ in~\eqref{eq:rFlatLPP_2} is known as Schur Pfaffian and, no matter the parity of $n$, it satisfies:
\begin{equation}
\label{eq:SchurPf}
\Pf(S^{(n)})
= \prod_{1\leq i<j\leq n} \frac{\alpha_j -\alpha_i}{\alpha_j + \alpha_i} \, .
\end{equation}
\begin{proof}
The proof of~\eqref{eq:rFlatLPP_1} follows the same steps as in the flat case. For the proof of~\eqref{eq:rFlatLPP_2}, one first uses the determinantal form~\eqref{eq:contSp2} of $\sp^{\rm cont}_{\bm{\ga}}$ and formula~\eqref{eq:SchurPf} to show that
\[
\P(\rTau_{2n}\leq u)
= \frac{1}{\Pf(S^{(n)}_{\bm{\alpha}})} \int_{\{0\leq x_1\leq \dots\leq x_n \leq u\}} \det(\phi_j(x_i))_{1\leq i,j\leq n} \prod_{i=1}^n \diff x_i \, ,
\]
where $\phi_j(x) := \alpha_j e^{-u \alpha_j} (e^{\alpha_j x} - e^{-\alpha_j x})$ for $1\leq j\leq n$.
To see that the latter integral is indeed the desired Pfaffian, it suffices to use the following integral identity due to de Bruijn~\cite{Bru55}: if $\nu$ is a Borel measure on $\R$ and $\phi_1,\dots,\phi_n\in L^2(\R,\diff \nu)$, then
\[
\int_{\{x_1\leq\dots\leq x_n\}}
\det\left(\phi_j(x_i)\right)_{1\leq i,j\leq n} \prod_{i=1}^n \nu(\diff x_i)
= \Pf(\Phi^{(n)}) \, ,
\]
where $\Phi^{(n)}$ is a skew-symmetric matrix of order $n$ or $n+1$, according to whether $n$ is even or odd respectively, and is defined by
\[
\Phi^{(n)}_{i,j} :=
\begin{dcases}
\int_{\R^2} \sign(y-x) \phi_i(x) \phi_j(y) \nu(\diff x) \nu(\diff y)
& \text{for } 1\leq i,j\leq n \, , \\
\int_{\R} \phi_i(x) \nu(\diff x)
& \text{for $1\leq i\leq n$, $j=n+1$; if $n$ is odd} \, .
\qedhere
\end{dcases}
\]
\end{proof}

\appendix
\section{}\label{appendixA}

 The integral parametrization for Whittaker functions used in number theory and in by particular by \cite{IS13} is different than ours: in this subsection, we will explain their connection\footnote{In this setting, we strictly stick to the notation of~\cite{IS13}. Accordingly, we remark that the hat in $\hat{W}^A_{n,\bm{a}}$ and $\hat{W}^B_{n,\bm{b}}$ do \emph{not} denote any transform here.}, and then show the equivalence between~\eqref{eq:IshiiStade} and the corresponding integral formulae in~\cite{IS13}. As it will become clear, in both the $\mathfrak{gl}_n$ and the $\mathfrak{so}_{2n+1}$ cases the two parametrizations are linked via the change of variables
\begin{equation}
\label{eq:ishiiStadeChangeOfVars}
y_1 := \frac{1}{\pi} \sqrt{\frac{x_2}{x_1}} 
\, , \quad \dots , \quad
y_{n-1} := \frac{1}{\pi} \sqrt{\frac{x_n}{x_{n-1}}} \, , \quad
y_n := \frac{1}{\pi\sqrt{{x_n}}} \, .
\end{equation}

For $\bm{a}\in\C^n$, set $\abs{a}:= a_1+\dots +a_n$. The $\mathfrak{gl}_n$-Whittaker function $\hat{W}^A_{n,\bm{a}}$ indexed by $\bm{a}\in\C^n$, according to the integral representation~\cite[Prop.~1.2]{IS13}, is defined as follows. For $n=2$,
\begin{equation}
\label{eq:gl_2WhittakerFnIshiiStade}
\hat{W}^A_{2,(a_1,a_2)}(y_1,y_2)
:= 2 y_1^{\abs{a}/2} y_2^{\abs{a}} K_{\frac{a_1 - a_2}{2}}(2\pi y_1) \, ,
\end{equation}
where $K$ is the Macdonald function
\begin{align}\label{macdonald}
K_\nu(x):=\frac{1}{2}\int_{\R_+} z^{\nu} \exp\Big(-\frac{x}{2}\Big(z+\frac{1}{z}\Big)\Big) \,\frac{\dd z}{z}.
\end{align}
 Recursively, for all $n\geq 3$,
\begin{equation}
\label{eq:glWhittakerFnIshiiStade}
\begin{split}
\hat{W}^A_{n,\bm{a}}(\bm{y})
:=\, &\pi^{-\abs{a}/2} \int_{\R_{+}^{n-1}} \hat{W}^A_{n-1,\tilde{\bm{a}}} \bigg(y_2\sqrt{\frac{t_2}{t_1}},\dots, y_{n-1}\sqrt{\frac{t_{n-1}}{t_{n-2}}}, y_n \frac{1}{\sqrt{t_{n-1}}} \bigg) \\
&\times \prod_{j=1}^{n-1} \exp\Big(- (\pi y_j)^2 t_j - \frac{1}{t_j} \Big) (\pi y_j)^{\frac{(n-j)a_1}{n-1}} t_j^{\frac{n a_1}{2(n-1)}} \frac{\diff t_j}{t_j} \, ,
\end{split}
\end{equation}
where $\tilde{\bm{a}}=(\tilde{a}_1,\dots,\tilde{a}_{n-1})$ is defined by $\tilde{a}_i := a_{i+1} + \frac{a_1}{n-1}$.

\begin{proposition}
\label{prop:glWhittakerFnIshiiStade}
If $a_i = 2 \alpha_{n-i+1}$ for $1\leq i\leq n$, and $\bm{x}$ and $\bm{y}$ satisfy~\eqref{eq:ishiiStadeChangeOfVars}, then
\[
\hat{W}^A_{n,\bm{a}}(\bm{y})
= \pi^{-(n+1)\abs{\bm{\alpha}}}
\Psi^{\mathfrak{gl}_n}_{-\bm{\alpha}} (\bm{x}) \, .
\]
\end{proposition}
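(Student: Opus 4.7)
The plan is to proceed by induction on $n\geq 2$, matching the Ishii-Stade recursion~\eqref{eq:glWhittakerFnIshiiStade} to our recursion~\eqref{eq:glWhittakerRecurs} under the change of variables~\eqref{eq:ishiiStadeChangeOfVars}.

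For the base case $n=2$, I would start from the defining formula~\eqref{eq:gl_2WhittakerFn} for $\Psi^{\mathfrak{gl}_2}_{-\bm{\alpha}}$, factor out $(x_1 x_2)^{-\alpha_2}$ from the integrand, and substitute $z = \sqrt{x_1 x_2}\,u$ so that the exponent becomes $-\sqrt{x_2/x_1}\,(u+1/u)$. Recognising the integral representation~\eqref{macdonald} of the Macdonald function $K_{\alpha_2-\alpha_1}(2\sqrt{x_2/x_1})$ and then translating $x_1,x_2$ into $y_1,y_2$ via~\eqref{eq:ishiiStadeChangeOfVars} gives $\Psi^{\mathfrak{gl}_2}_{-\bm{\alpha}}(\bm{x})=\pi^{3|\bm{\alpha}|}\hat{W}^A_{2,\bm{a}}(\bm{y})$, as required.

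For the inductive step, I would apply the recursion~\eqref{eq:glWhittakerRecurs} to $\Psi^{\mathfrak{gl}_n}_{-\bm{\alpha}}(\bm{x})$ and then perform the key substitution $u_j = x_{j+1}\,t_j$ in the inner integral. A short computation shows that $u_j/x_j=(\pi y_j)^2 t_j$ and $x_{j+1}/u_j=1/t_j$, so the exponential factor in $Q^{\mathfrak{gl}_n}_{-\alpha_n}(\bm{x},\bm{u})$ matches the exponential $\prod_j\exp(-(\pi y_j)^2 t_j-1/t_j)$ of~\eqref{eq:glWhittakerFnIshiiStade}. Moreover, under~\eqref{eq:ishiiStadeChangeOfVars} applied at depth $n-1$, the values $u_j=x_{j+1}t_j$ correspond exactly to the arguments $y_2\sqrt{t_2/t_1},\ldots,y_n/\sqrt{t_{n-1}}$ appearing in $\hat{W}^A_{n-1,\tilde{\bm{a}}}$. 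The induction hypothesis then expresses $\hat{W}^A_{n-1,\tilde{\bm{a}}}$ in terms of $\Psi^{\mathfrak{gl}_{n-1}}_{-\tilde{\bm{\alpha}}'}$, where $\tilde{\alpha}'_j=\alpha_j+\alpha_n/(n-1)$.

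The main obstacle is the mismatch between $\tilde{\bm{\alpha}}'$ on the Ishii-Stade side and the simple truncation $\tilde{\bm{\alpha}}=(\alpha_1,\ldots,\alpha_{n-1})$ on our side. I would resolve this using the translation property~\eqref{eq:glWhittakerFnTranslation}, which produces an extra factor $(\prod_{j=1}^{n-1}u_j)^{-\alpha_n/(n-1)}$; with $u_j=x_{j+1}t_j$, this becomes $\prod_{j=2}^{n}x_j^{-\alpha_n/(n-1)}\prod_{j=1}^{n-1}t_j^{-\alpha_n/(n-1)}$. It remains to verify that, once combined with the explicit power factor $\prod_j(\pi y_j)^{(n-j)a_1/(n-1)}t_j^{na_1/(2(n-1))}$ from~\eqref{eq:glWhittakerFnIshiiStade} (rewritten using $\pi y_j=\sqrt{x_{j+1}/x_j}$ and $a_1=2\alpha_n$), the total prefactor collapses to $x_1^{-\alpha_n}\prod_{j}t_j^{\alpha_n}=(\prod_j x_j/\prod_j u_j)^{-\alpha_n}$, matching our kernel $Q^{\mathfrak{gl}_n}_{-\alpha_n}$. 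Finally, a bookkeeping check confirms that the $\pi$ powers from~\eqref{eq:glWhittakerFnIshiiStade} ($\pi^{-|\bm{\alpha}|}$) and from the induction hypothesis ($\pi^{-n|\tilde{\bm{\alpha}}'|}=\pi^{-n|\bm{\alpha}|}$, since $|\tilde{\bm{\alpha}}'|=|\bm{\alpha}|$) combine to the desired $\pi^{-(n+1)|\bm{\alpha}|}$, closing the induction.
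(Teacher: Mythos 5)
Your proposal is correct and follows essentially the same route as the paper: induction on $n$ with the $n=2$ base case reduced to the Macdonald function, and the inductive step obtained by matching the Ishii--Stade recursion to \eqref{eq:glWhittakerRecurs} via the substitution $u_j = x_{j+1} t_j$, using the translation property \eqref{eq:glWhittakerFnTranslation} to absorb the shift $\alpha_n/(n-1)$ in the truncated parameters. The only (immaterial) difference is that you run the computation starting from $\Psi^{\mathfrak{gl}_n}_{-\bm{\alpha}}$ rather than from $\hat{W}^A_{n,\bm{a}}$, and your bookkeeping of the powers of $x_j$, $t_j$ and $\pi$ checks out.
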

\begin{proof}
For $n=2$, eq.~\eqref{eq:gl_2WhittakerFnIshiiStade} and the relations defining $\bm{y}$ and $\bm{a}$ in terms of $\bm{x}$ and $\bm{\alpha}$ yield
\[
\hat{W}^A_{2,(a_1,a_2)}(y_1,y_2)
= 2 \pi^{-3\abs{\bm{\alpha}}}
(x_1 x_2)^{-\abs{\bm{\alpha}}/2}
K_{\alpha_2 - \alpha_1}\bigg(2\sqrt{\frac{x_2}{x_1}}\bigg) \, .
\]
On the other hand, \eqref{eq:gl_2WhittakerFn} and~\eqref{macdonald} yield
\[
\Psi^{\mathfrak{gl}_2}_{-(\alpha_1,\alpha_2)} (x_1,x_2)
= 2 (x_1 x_2)^{-\abs{\bm{\alpha}}/2}
K_{\alpha_2 - \alpha_1}\bigg(2\sqrt{\frac{x_2}{x_1}}\bigg) \, ,
\]
proving the desired identity for $n=2$. Assume now that the result holds for $n-1$. In light of~\eqref{eq:ishiiStadeChangeOfVars} and after the change of variables $u_j = t_j x_{j+1}$ for $1\leq j\leq n-1$ in the integral~\eqref{eq:glWhittakerFnIshiiStade}, we obtain
\[
\begin{split}
\hat{W}^A_{n,\bm{a}}(\bm{y})
= \,& \pi^{-\abs{\bm{a}}/2} \int_{\R_{+}^{n-1}} \hat{W}^A_{n-1,\tilde{\bm{a}}} \bigg(\frac{1}{\pi}\sqrt{\frac{u_2}{u_1}},\dots, \frac{1}{\pi}\sqrt{\frac{u_{n-1}}{u_{n-2}}}, \frac{1}{\pi\sqrt{{u_{n-1}}}} \bigg) \\
&\times \prod_{i=1}^n x_i^{-\frac{a_1}{2}}
\prod_{j=1}^{n-1}
\exp\Big( -\frac{u_j}{x_j} - \frac{x_{j+1}}{u_j}\Big)
u_j^{\frac{n a_1}{2(n-1)}}
\frac{\diff u_j}{u_j} \, .
\end{split}
\]
Using the induction hypothesis and the property stated in~\eqref{eq:glWhittakerFnTranslation}, it is easy to see that
\[
\hat{W}^A_{n-1,\tilde{\bm{a}}} \bigg(\frac{1}{\pi}\sqrt{\frac{u_2}{u_1}},\dots, \frac{1}{\pi}\sqrt{\frac{u_{n-1}}{u_{n-2}}}, \frac{1}{\pi\sqrt{{u_{n-1}}}} \bigg)
= \pi^{-n \abs{\bm{\alpha}}}
\bigg(\prod_{j=1}^{n-1} u_j\bigg)^{-\frac{\alpha_n}{n-1}}
\Psi^{\mathfrak{gl}_{n-1}}_{-(\alpha_1,\dots,\alpha_{n-1})}(\bm{u}) \, .
\]
It follows that
\[
\begin{split}
\hat{W}^A_{n,\bm{a}}(y_1,\dots,y_n)
= \,& \pi^{-(n+1)\abs{\bm{\alpha}}}
\int_{\R_{+}^{n-1}}
\Psi^{\mathfrak{gl}_{n-1}}_{-(\alpha_1,\dots,\alpha_{n-1})}(\bm{u}) \\
&\qquad\times\bigg(\frac{\prod_{i=1}^n x_i}{\prod_{i=1}^{n-1}u_i}\bigg)^{-\alpha_n}
\prod_{j=1}^{n-1}
\exp\Big( -\frac{u_j}{x_j} - \frac{x_{j+1}}{u_j}\Big)
\frac{\diff u_j}{u_j} \, .
\end{split}
\]
Using the recursive relation~\eqref{eq:glWhittakerRecurs}, we get the desired identity for $n$.
\end{proof}

The $\mathfrak{so}_{2n+1}$-Whittaker function $\hat{W}^B_{n,\bm{b}}$ indexed by $\bm{b}\in\C^n$, according to the integral representation~\cite[Prop.~1.3]{IS13}, is defined as follows. For $n=1$,
\begin{equation}
\label{eq:so_3WhittakerFnIshiiStade}
\hat{W}^B_{1,b_1}(y_1)
:= 2 K_{b_1}(2\pi y_1) \, .
\end{equation}
Recursively, for all $n\geq 2$,
\begin{equation}
\label{eq:soWhittakerFnIshiiStade}
\begin{split}
\hat{W}^B_{n,\bm{b}}(\bm{y})
:=\, &\int_{\R_{+}^n} \int_{\R_{+}^{n-1}}
\hat{W}^B_{n-1,\tilde{\bm{b}}} \bigg(y_2\sqrt{\frac{t_2 s_2}{t_3 s_1}},\dots, y_{n-1}\sqrt{\frac{t_{n-1} s_{n-1}}{t_n s_{n-2}}}, y_n \sqrt{\frac{t_n}{s_{n-1}}} \bigg) \\
&\times \prod_{j=1}^{n-1} \bigg[\exp\Big(- (\pi y_j)^2 \frac{t_j}{t_{j+1}} s_j - \frac{1}{s_j} \Big)
(t_{j+1} s_j)^{\frac{b_n}{2}} \frac{\diff s_j}{s_j} \bigg] \\
&\times t_1^{b_n} \prod_{j=1}^n \bigg[
\exp\Big(-(\pi y_j)^2 t_j - \frac{1}{t_j}\Big)
(\pi y_j)^{b_n} \frac{\diff t_j}{t_j} \bigg] \, ,
\end{split}
\end{equation}
where $\tilde{\bm{b}}=(b_1,\dots,b_{n-1})$.

\begin{proposition}
\label{prop:soWhittakerFnIshiiStade}
If $b_i = 2 \beta_i$ for $1\leq i\leq n$, and $\bm{x}, \bm{y}$ satisfy~\eqref{eq:ishiiStadeChangeOfVars}, then
\[
\hat{W}^B_{n,\bm{b}}(\bm{y})
= \Psi^{\mathfrak{so}_{2n+1}}_{\bm{\beta}} (\bm{x}) \, .
\]
\end{proposition}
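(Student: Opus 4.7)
The plan is to mirror the inductive proof of Proposition~\ref{prop:glWhittakerFnIshiiStade}, now comparing the Ishii--Stade recursion~\eqref{eq:soWhittakerFnIshiiStade} with our recursive representation~\eqref{eq:soWhittakerRecurs} for $\mathfrak{so}_{2n+1}$-Whittaker functions. I proceed by induction on $n$.

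\textbf{Base case $n=1$.} I would directly verify $\hat{W}^B_{1,b_1}(y_1) = 2K_{b_1}(2\pi y_1) = \Psi^{\mathfrak{so}_3}_{\beta_1}(x_1)$. Starting from~\eqref{eq:so_3WhittakerFn}, the substitution $z = \sqrt{x_1}\, u$ together with~\eqref{macdonald} applied with $\nu = 2\beta_1$ yields $\Psi^{\mathfrak{so}_3}_{\beta_1}(x_1) = 2 K_{2\beta_1}(2/\sqrt{x_1})$, which matches $\hat{W}^B_{1, 2\beta_1}\bigl(1/(\pi\sqrt{x_1})\bigr)$ using $b_1 = 2\beta_1$ and~\eqref{eq:ishiiStadeChangeOfVars}.

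\textbf{Inductive step.} Assuming the claim for $n-1$, I would perform in~\eqref{eq:soWhittakerFnIshiiStade} the change of variables
\[
t_j = \frac{x_j}{v_j} \quad (1\le j\le n), \qquad s_j = \frac{u_j}{v_{j+1}} \quad (1\le j\le n-1),
\]
with the convention $x_{n+1}:=1$. This is a bijection $\R_{+}^{2n-1}\to\R_{+}^{2n-1}$ whose Jacobian is $\pm 1$ in logarithmic coordinates, so $\prod_j \diff t_j/t_j \prod_j \diff s_j/s_j = \prod_j \diff v_j/v_j \prod_j \diff u_j/u_j$. Using $(\pi y_j)^2 = x_{j+1}/x_j$, one checks that $\exp(-(\pi y_j)^2 t_j - 1/t_j)$ becomes $\exp(-x_{j+1}/v_j - v_j/x_j)$ and $\exp(-(\pi y_j)^2(t_j/t_{j+1}) s_j - 1/s_j)$ becomes $\exp(-u_j/v_j - v_{j+1}/u_j)$, exactly the two families of exponentials appearing in the kernel $Q^{\mathfrak{so}_{2n+1}}_{\beta_n}(\bm{x},\bm{u})$ of~\eqref{eq:soWhittakerRecurs}. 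A parallel short calculation shows that the $k$-th argument $y_{k+1}\sqrt{t_{k+1}s_{k+1}/(t_{k+2}s_k)}$ of $\hat{W}^B_{n-1,\tilde{\bm{b}}}$ collapses to $(1/\pi)\sqrt{u_{k+1}/u_k}$ for $k<n-1$, while the last one $y_n\sqrt{t_n/s_{n-1}}$ collapses to $1/(\pi\sqrt{u_{n-1}})$; the inductive hypothesis thus gives $\hat{W}^B_{n-1,\tilde{\bm{b}}}(\dots) = \Psi^{\mathfrak{so}_{2n-1}}_{\tilde{\bm{\beta}}}(\bm{u})$. Finally, assembling the monomial prefactors arising from $t_1^{b_n}$, $(t_{j+1}s_j)^{b_n/2}$ and $(\pi y_j)^{b_n}$, a straightforward bookkeeping exercise reduces them to
\[
\left(\frac{\prod_{j=1}^n x_j \prod_{j=1}^{n-1} u_j}{\prod_{j=1}^n v_j^2}\right)^{\beta_n}
= \left(\frac{\prod_{j=1}^n v_j^2}{\prod_{j=1}^n x_j \prod_{j=1}^{n-1} u_j}\right)^{-\beta_n}.
\]
Plugging everything back into~\eqref{eq:soWhittakerFnIshiiStade} and comparing with~\eqref{eq:soWhittakerRecurs}, one obtains $\hat{W}^B_{n,\bm{b}}(\bm{y}) = \Psi^{\mathfrak{so}_{2n+1}}_{(\beta_1,\ldots,\beta_{n-1},-\beta_n)}(\bm{x})$, which, by the reflection invariance of $\Psi^{\mathfrak{so}_{2n+1}}_{\bm{\beta}}$ in its spectral parameters (recorded right after Definition~\ref{Givental_Whit}), equals $\Psi^{\mathfrak{so}_{2n+1}}_{\bm{\beta}}(\bm{x})$, completing the induction.

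\textbf{Main obstacle.} The crux is finding the substitution $t_j = x_j/v_j$, $s_j = u_j/v_{j+1}$ that simultaneously (i) rewrites the two families of exponentials into those of~\eqref{eq:soWhittakerRecurs}, (ii) places the $\hat{W}^B_{n-1}$-arguments in the normalized form $(1/\pi)\sqrt{u_{k+1}/u_k}$ and $1/(\pi\sqrt{u_{n-1}})$ required by~\eqref{eq:ishiiStadeChangeOfVars} for the inductive hypothesis, and (iii) is volume-preserving in logarithmic coordinates. The sign discrepancy in the exponent $\beta_n$ of the remaining monomial is a genuine but minor subtlety, resolved cleanly (and only a posteriori) by the Weyl invariance $\beta_n \mapsto -\beta_n$, without which the two parametrizations would appear mismatched.
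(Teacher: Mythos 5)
Your proof is correct and follows essentially the same route as the paper's: the same base case via the Macdonald function, the same change of variables $t_j = x_j/v_j$, $s_j = u_j/v_{j+1}$ in the inductive step, and the same resolution of the $\beta_n \mapsto -\beta_n$ sign via Weyl-group invariance. No discrepancies to report.
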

\begin{proof}
For $n=1$, we have indeed
\[
\hat{W}^B_{1,b_1}(y_1)
= 2 K_{2\beta_1}\bigg(\frac{2}{\sqrt{x_1}}\bigg)
= \Psi^{\mathfrak{so}_3}_{\beta_1} (x_1) \, .
\]
Here, the first equality follows from~\eqref{eq:so_3WhittakerFnIshiiStade} and the relations defining $\bm{y}$ and $\bm{b}$ in terms of $\bm{x}$ and $\bm{\beta}$, whereas the second equality is deduced by combining \eqref{eq:so_3WhittakerFn} and~\eqref{macdonald}.

Assume now that the result holds for $n-1$. In light of~\eqref{eq:ishiiStadeChangeOfVars} and after the changes of variables $v_j = x_j / t_j$ for $1\leq j\leq n$ and $u_j = v_{j+1} s_j$ for $1\leq j\leq n-1$ in the integral~\eqref{eq:soWhittakerFnIshiiStade}, we obtain
\[
\begin{split}
\hat{W}^B_{n,\bm{b}}(\bm{y})
= \,& \int_{\R_{+}^n} \int_{\R_{+}^{n-1}}
\hat{W}^B_{n-1,\tilde{\bm{b}}} \bigg(\frac{1}{\pi}\sqrt{\frac{u_2}{u_1}},\dots, \frac{1}{\pi}\sqrt{\frac{u_{n-1}}{u_{n-2}}}, \frac{1}{\pi\sqrt{{u_{n-1}}}} \bigg)  \,\bigg( \frac{\prod_{j=1}^n v_j^2}{\prod_{j=1}^n x_j \prod_{j=1}^{n-1} u_j} \bigg)^{-\frac{b_n}{2}}\\
&\times
\prod_{j=1}^{n-1} \bigg[\exp\Big(- \frac{u_j}{v_j} -\frac{v_{j+1}}{u_j} \Big) \frac{\diff u_j}{u_j} \bigg]
\prod_{j=1}^n \bigg[ \exp\Big(-\frac{x_{j+1}}{v_j} - \frac{v_j}{x_j}\Big)
\frac{\diff v_j}{v_j} \bigg]
 \, .
\end{split}
\]
Using the induction hypothesis and the fact that $\bm{b}=2\bm{\beta}$, we see that the latter expression coincides with $\Psi^{\mathfrak{so}_{2n+1}}_{(\beta_1,\dots,\beta_{n-1},-\beta_n)}(\bm{x})$ (see recursive formula~\eqref{eq:soWhittakerRecurs}), which in turn equals $\Psi^{\mathfrak{so}_{2n+1}}_{\bm{\beta}} (\bm{x})$ due to the invariance of the Whittaker functions under the action of the Weyl group on the
parameters $(\gb_1,...,\gb_n)$; in this situation this amounts to invariance under permutations and multiplication by $\pm1$.
\end{proof}

Now, the integral formula we are interested in is stated in~\cite[Thm.~3.2]{IS13}:
\[
2^n \int_{\R_{+}^n} \bigg(\prod_{j=1}^n y_j^j\bigg)^s
\hat{W}^A_{n,\bm{a}}(\bm{y}) \hat{W}^B_{n,\bm{b}}(\bm{y})
\prod_{j=1}^n \frac{\diff y_j}{y_j}
= \frac{\prod_{1\leq i,j\leq n}
\Gamma_{\mathbf{R}}(s+a_i + b_j)
\Gamma_{\mathbf{R}}(s+a_i-b_j)}
{\prod_{1\leq i<j\leq n} \Gamma_{\mathbf{R}}(2s+a_i+a_j)} \, ,
\]
where $\Gamma_{\mathbf{R}}(z) := \pi^{-z/2} \Gamma(z/2)$. Using the change of variables~\eqref{eq:ishiiStadeChangeOfVars} and Propositions~\ref{prop:glWhittakerFnIshiiStade} and~\ref{prop:soWhittakerFnIshiiStade}, the above formula can be easily rewritten as
\[
\int_{\R_{+}^n}
\bigg(\prod_{i=1}^n x_i\bigg)^{-s/2}
\Psi_{-\bm{\alpha}}^{\mathfrak{gl}_n}(\bm{x})
\Psi_{\bm{\beta}}^{\mathfrak{so}_{2n+1}}(\bm{x})
\prod_{i=1}^n \frac{\diff x_i}{x_i}
= \frac{\prod_{1\leq i,j\leq n}
\Gamma(s/2+\alpha_i + \beta_j)
\Gamma(s/2 + \alpha_i - \beta_j)}
{\prod_{1\leq i<j\leq n} \Gamma(s + \alpha_i+\alpha_j)} \, .
\]
Theorem~\ref{thm:IshiiStade} now follows by taking $s=0$. Note that, in turn, the latter identity can be deduced by Theorem~\ref{thm:IshiiStade}: indeed, the term
$(\prod_{i=1}^n x_i)^{-s/2}
\Psi_{-\bm{\alpha}}^{\mathfrak{gl}_n}(\bm{x})$
is itself a $\mathfrak{gl}_n$-Whittaker function as a whole, because of~\eqref{eq:glWhittakerFnTranslation}.

\vskip 4mm
{\bf Acknowledgements}: We would like to express our gratitude to T.~Ishii and E.~Stade for the many communications and explanations
on their work and on Whittaker functions, including 
the relation between orthogonal Whittaker functions and symplectic Schur functions via the Casselman-Shalika formula,
 mentioned in the introduction. We also thank I. Nteka for useful discussions.
 NZ thanks Y.~Sakellaridis for several long and very useful discussions. 
The work of NZ was supported by EPSRC via grant EP/L012154/1.
The work of EB was supported by EPSRC via grant EP/M506679/1.

\end{document}